\theoremstyle{definition}
\theoremstyle{plain}
\newcommand{\C}{\mathbb{C}}
\newcommand{\F}{\mathcal{F}}
\newcommand{\wt}{\textup{wt}}
\theoremstyle{plain}
\newtheorem{lemma}{Lemma}
\theoremstyle{definition}
\theoremstyle{remark}
\newcommand{\Z}{\mathbb{Z}}
\newcommand{\V}{\mathcal V}
\numberwithin{equation}{section} % to get equations numbered
\newcommand{\rmap}{\longrightarrow}
\newcommand{\U}{\ensuremath{\mathcal{U}}}
\newcommand{\g}{\ensuremath{\Gamma}}
\newcommand{\ps}{{\raise 1pt\hbox{\tiny (}}}
\newcommand{\pss}{{\raise 1pt\hbox{\tiny [}}}
\newcommand{\pdd}{{\raise 1pt\hbox{\tiny ]}}}
\newcommand{\pd}{{\raise 1pt\hbox{\tiny )}}}
\newcommand{\bs}{{\raise 1pt\hbox{\tiny [}}}
\newcommand{\bd}{{\raise 1pt\hbox{\tiny ]}}}
\def\cross{\mathinner{\mathrel{\raise0.8pt\hbox{$\scriptstyle>$}}
                 \joinrel\mathrel\triangleleft}}
\def\U{\mathcal{U}}
\def\W{\mathcal{V}}
\def\W{\mathcal{W}}
\def\K{\mathcal{K}}
\newcommand{\be}{\begin{equation}}
\newcommand{\ee}{\end{equation}}
\newcommand{\nn}{\nonumber \\}
 \newcommand{\res}{\mbox{\rm Res}}
\newcommand{\one}{\mathbf{1}}
\newcommand{\nc}{\newcommand}
\nc{\cali}{\mathcal}
\nc{\on}{\operatorname}
\nc{\Wick}{{\mb :}}
\nc{\ddz}{\frac{\partial}{\partial z}}
\nc{\ch}{\mbox{ch}}
\nc{\Oo}{{\cali O}}
\nc{\cond}{|\,}
\nc{\bib}{\bibitem}
\nc{\pone}{\Pro^1}
\nc{\pa}{\partial}
\nc{\arr}{\rightarrow}
\nc{\larr}{\longrightarrow}
\nc{\ket}{)}
\nc{\bra}{(}
\nc{\gam}{\bar{\gamma}}
\nc{\ep}{\epsilon}
\nc{\su}{\widehat{{\mf s}{\mf l}}_2}
\nc{\sw}{{\mf s}{\mf l}}
\nc{\h}{{\mf h}}
\nc{\n}{{\mf n}}
\nc{\ab}{\mf{a}}
\nc{\is}{{\mb i}}
\nc{\js}{{\mb j}}
\nc{\He}{{\cali H}}
\nc{\inv}{^{-1}}
\nc{\ol}{\overline}
\nc{\wh}{\widehat}
\nc{\dst}{\displaystyle}
\nc{\delt}{\partial_t}
\nc{\ddt}{\frac{\partial}{\partial t}}
\nc{\delx}{\partial_x}
\nc{\mb}{\mathbf}
\nc{\mf}{\mathfrak}
\nc{\mbb}{\mathbb}
\nc{\Ctt}{\C((t))}
\nc{\Ct}{\C[t,t\inv]}
\nc{\ghat}{\wh{\g}}
\nc{\un}{\underline}
\nc{\mc}{\mathcal}
\nc{\BB}{{\mc B}}
\nc{\bb}{{\mf b}}
\nc{\kk}{{\mf k}}
\nc{\frob}{\times}
\nc{\sm}{\setminus}
\nc{\Pp}{{\mathbb P}^1}
\nc{\Aa}{{\mc A}}
\nc{\AutO}{\on{Aut}\Oo}
\nc{\AUTO}{\un{\on{Aut}}\Oo}
\nc{\AUTK}{\un{\on{Aut}}\K}
\nc{\Heout}{\He_{\out}}
\nc{\Hetil}{{\widetilde\He}}
\nc{\wb}{\overline}
\nc{\Res}{\on{Res}}
\nc{\pitil}{\Pi}
\nc{\Ctil}{\wt{C}}
\nc{\auto}{\on{Aut} \Oo}
\nc{\phitil}{\wt{\phi}}
\nc{\gz}{\g_{\vec z}}
\nc{\tensorM}{\bigotimes_{i=1}^N{\mathbb M}_i}
\nc{\tensorW}{\bigotimes_{i=1}^N W_{\nu_i,k}}
\nc{\out}{\on{out}}
\nc{\m}{{\mathfrak m}}
\nc{\gx}{\g^0_{\vec x}}
\nc{\hx}{\He^0_{\vec x}}
\nc{\tensorpi}{\pi_{\nu_1,\ldots,\nu_N}^\kappa}
\nc{\Phizw}{\Phi_{\vec w}({\vec z})}
\nc{\Pro}{{\mathbb P}}
\nc{\De}{D}
\nc{\us}{\underset}
\nc{\Ll}{\mc L}
\nc{\dR}{\on{dR}}
\nc{\T}{{\mc T}}
\nc{\Xn}{\overset{\circ}X{}^n} \nc{\Dn}{\overset{\circ}D{}^n}
\nc{\Dxn}{\overset{\circ}D{}^n_x} \nc{\varphitil}{\wt{\varphi}}
\nc{\lf}{{\mf l}}
\nc{\Wir}{\on{Vir}}
\nc{\bfgn}{{\bf g}_n}
\nc{\bfzn}{{\bf z}_n}
\begin{document}
\title[On holonomy groupoid of vertex operator algebra bundles]
{On holonomy groupoid of vertex operator algebra bundles on foliations}
%%
                                
%%%%%%%%%%%%%%%%%%%%%%%%%%%%%%%%%%%%%%%%%%%%%%%%%%%%%%%%%%%%%%%%%%%%%%%%%%%%%%%
\author{A. Zuevsky} 
\address{Institute of Mathematics \\ Czech Academy of Sciences\\ Zitna 25, Prague\\ Czech Republic}

\email{zuevsky@yahoo.com}

%%%%%%%%%%%%%%%%%%%%%%%%%%%%%%%%%%%%%%%%%%%%%%%%%%%%%%%%%%%%%%%%%%%%%%%%%
% You may repeat \author \address as often as necessary                 %
%%%%%%%%%%%%%%%%%%%%%%%%%%%%%%%%%%%%%%%%%%%%%%%%%%%%%%%%%%%%%%%%%%%%%%%%%
%%
\begin{abstract}
For a foliation $\F$ defined on a smooth complex manifold $M$ 
 we introduce the category of vertex operator algebra $V$ 
bundles with sections 
provided by vectors of elements of the space of 
algebraically extended $V$-module $W$-valued differentials. 
%%
%%%%%%%%%%%%%%%%%%%%%%%%%%%%%%%%%%%%%%%%%%%%%%%%%%%%%%%%%%%%%%%%%%%%%%%%%%%%%%%%%%%
%%
An intrinsic coordinate-independent formulation for such bundles  
 is given. 
%%
%%%%%%%%%%%%%%%%%%%%%%%%%%%%%%%%%%%%%%%%%%%%%%%%%%%%%%%%%%%%%%%%%%%%%%%%%%%%%%%%%%%%%
%%
Finally, we identify the cohomology of the spaces of sections for a vertex operator algebra $V$ 
bundle with vertex operator algebra cohomology of the 
holonomy groupoid $Hol(M, \F)$. 

\bigskip 
AMS Classification: 53C12, 57R20, 17B69 
\end{abstract}

\keywords{Holonomy groupoids, fiber bundles, vertex operator algebras, cohomology}

\vskip12pt  % insert '\vskip12pt' while using '\twocolumn' command
%\vskip28pt % if there is no keywords

\maketitle

%%%%%%%%%%%%%%%%%%%%%%%%%%%%%%%%%%%%%%%%%%%%%%%%%%%%%%%%%%%%%%%%%%%%%%%%%%%%%%%%%%%%%%%%%%%%%%
\section{Conflict of Interest}
The author states that: 

1.) The paper does not contain any potential conflicts of interests.

%%%%%%%%%%%%%%%%%%%%%%%%%%%%%%%%%%%%%%%%%%%%%%%%%%%%%%%%%%%%%%%%%%%%%%%%%%%%%%%%%%%%%%%%%%%%%%
\section{Data availability statement}
The author confirms  that: 

\medskip 
1.) All data generated or analysed during this study are included in this published article. 

\medskip 
2.)   Data sharing not applicable to this article as no datasets were generated or analysed during the current study.

%%%%%%%%%%%%%%%%%%%%%%%%%%%%%%%%%%%%%%%%%%%%%%%%%%%%%%%%%%%%%%%%%%%%%%%%%%%%%%%%%%%%%%%%%%
%%%%%%%%%%%%%%%%%%%%%%%%%%%%%%%%%%%%%%%%%%%%%%%%%%%%%%%%%%%%%%%%%%%%%%%%%%%%%%%%%%%%%%%%%%
\section{Introduction: results of the paper}  
\label{introduction}
%%
%%%%%%%%%%%%%%%%%%%%%%%%%%%%%%%%%%%%%%%%%%%%%%%%%%%%%%%%%%%%%%%%%%%%%%%%%%%%%%%%%%%%%%%%%%%%%%
%%
The theory of foliated manifolds incorporates a few main approaches 
\cite{Bott, BS, BH, CM, Fuks, F73, LosikArxiv}.     
%%
%%%%%%%%%%%%%%%%%%%%%%%%%%%%%%%%%%%%%%%%%%%%%%%%%%%%%%%%%%%%%%%%%%%%%%%%%%%%%%%%%%%%%%%%%%%%%
%%
The idea of studies of foliations cohomology, cohomology of related bundles, and connections to 
 to cohomology of foliated manifolds themselves was proposed in \cite{BS}.  
%%
%%%%%%%%%%%%%%%%%%%%%%%%%%%%%%%%%%%%%%%%%%%%%%%%%%%%%%%%%%%%%%%%%%%%%%%%%%%%%%%%%%%%%%%%%%%%%%%%%%%%%%%%%%%%%%%%%%%%
%%%%%%%%%%%%%%%%%%%%%%%%%%%%%%%%%%%%%%%%%%%%%%%%%%%%%%%%%%%%%%%%%%%%%%%%%%%%%%%%%%%%%%%%%%%%%%%%%%%%%%%%%%%%%%%%%%%%
%%%%
  Let $Vect(M)$ be the Lie algebra of vector fields on $M$.   
In \cite{BS} is was proven 
  that the Gelfand-Fuks cohomology $H^*(Vect(M))$ \cite{Fuks}  
 is isomorphic to the singular cohomology $H^*(E)$ 
of the space $E$ of continuous cross sections of a certain 
fiber bundle $\mathcal E$ over $M$. 
In \cite{PT, Sm} they continued to use advanced topological methods of \cite{BS} 
 for cases of more general cosimplicial spaces of maps.   
%%
%%%%%%%%%%%%%%%%%%%%%%%%%%%%%%%%%%%%%%%%%%%%%%%%%%%%%%%%%%%%%%%%%%%%%%%%%%%%%%%%%%%%%%%%%%%
%%%%%%%%%%%%%%%%%%%%%%%%%%%%%%%%%%%%%%%%%%%%%%%%%%%%%%%%%%%%%%%%%%%%%%%%%%%%%%%%%%%%%%%%%%%
%%
In \cite{Wag} it was demonstrated that the ordinary theory 
 of vector fields on a complex manifold $M$ was not always the most effective way 
to study cohomology of $M$. 
One has to $M$ consider more complicated algebraic and geometrical structures to 
arrive at non-trivial cohomology theories associated to such structures. 
One of possible candidate for such structures is given by vertex operator algebras 
with formal parameters considered as local coordinates on complex manifolds. 
 Vertex operator algebras \cite{B, DL, K} represent generalizations of ordinary Lie algebras and 
constitute an essential part of conformal field theory \cite{FMS, BZF, H2}.   
%%%% 

%%%%%%%%%%%%%%%%%%%%%%%%%%%%%%%%%%%%%%%%%%%%%%%%%%%%%%%%%%%%%%%%%%%%%%%%%%%%%%%%%%
%%%%%%%%%%%%%%%%%%%%%%%%%%%%%%%%%%%%%%%%%%%%%%%%%%%%%%%%%%%%%%%%%%%%%%%%%%%%%%%%%%
%%
The main motivation for studies of this paper
 is to develop a vertex operator algebra approach 
to cohomology of auxiliary bundle defined on leaf spaces $M/\F$ and transversal sections of foliations. 
%%
%%%%%%%%%%%%%%%%%%%%%%%%%%%%%%%%%%%%%%%%%%%%%%%%%%%%%%%%%%%%%%%%%%%%%%%%%%%%%%%%%%%%%%%%%%%%
%%%%
The ground idea it to use well-developed and powerful machinery and  
structural and computation properties of vertex operator algebras 
to cohomology of non-commutative objects attached to $M/\F$ 
to describe its leaves in terms of corresponding invariants.  
%%
%%%%%%%%%%%%%%%%%%%%%%%%%%%%%%%%%%%%%%%%%%%%%%%%%%%%%%%%%%%%%%%%%%%%%%%%%%%%%%%%%%%%%%%%%%%%%%
%%%%%%%%%%%%%%%%%%%%%%%%%%%%%%%%%%%%%%%%%%%%%%%%%%%%%%%%%%%%%%%%%%%%%%%%%%%%%%%%%%
%%%%
By taking into account the standard methods of defining canonical
 cosimplicial object \cite{Fei, Wag} as well as the ${\rm \check{C}}$ech-de Rham 
cohomology construction \cite{CM},  
 we construct in this paper in intrinsic coordinate-independent way
 canonical fiber bundles associated to $\F$ on $M$. 
%%
%%%%%%%%%%%%%%%%%%%%%%%%%%%%%%%%%%%%%%%%%%%%%%%%%%%%%%%%%%%%%%%%%%%%%%%%%%%
%%%%%%%%%%%%%%%%%%%%%%%%%%%%%%%%%%%%%%%%%%%%%%%%%%%%%%%%%%%%%%%%%%%%%%%%%%%%
%%%%
Our main purpose then is to demonstrate in Lemma \ref{groupo} 
that there exist a computational vertex operator algebra based way to 
determine the cohomology of the holonomy groupoid $Hol(M, \F)$ for a foliation 
$\F$ on $M$. 
%% 
%%%%%%%%%%%%%%%%%%%%%%%%%%%%%%%%%%%%%%%%%%%%%%%%%%%%%%%%%%%%%%%%%%%%%%%%%%%%%%%%%%%%%%
%%%%  
Developing \cite{BS}, the cohomology of foliations   
for a complex smooth manifold $M$ is expressed in terms of cohomology 
of a canonical complex  
for an auxiliary bundle  
 with intrinsic action of the coboundary operator.  
%% 
%%%%%%%%%%%%%%%%%%%%%%%%%%%%%%%%%%%%%%%%%%%%%%%%%%%%%%%%%%%%%%%%%%%%%%%%%%%%%%%%%%%%%%%%%%%%%%%%%%%%%%%%%%
%%%%%%%%%%%%%%%%%%%%%%%%%%%%%%%%%%%%%%%%%%%%%%%%%%%%%%%%%%%%%%%%%%%%%%%%%%%%%%%%%%%%%%%%%%%%%%%%%%%%%%%%%
The construction of bundles with canonical sections defined over abstract discs 
 on a smooth complex manifold $M$ is
 grounded on the structure of admissible vertex operator algebra $V$ modules $W$. 
%%
%%%%%%%%%%%%%%%%%%%%%%%%%%%%%%%%%%%%%%%%%%%%%%%%%%%%%%%%%%%%%%%%%%%%%%%%%%%%%%%%%%%%%%%%%%%%%%%%%%%%%%%
%%%% 
Corresponding cohomology is considerd in terms of spaces of rational functions 
provided by values of non-degenerate bilinear pairings on $W$ with 
 specified analytic behavior,  
 and satisfying certain symmetry properties. 
%%

%%%%%%%%%%%%%%%%%%%%%%%%%%%%%%%%%%%%%%%%%%%%%%%%%%%%%%%%%%%%%%%%%%%%%%%%%%%%%%%%%%%%%%%%%%%%%%%%%%%%%%
%%%%%%%%%%%%%%%%%%%%%%%%%%%%%%%%%%%%%%%%%%%%%%%%%%%%%%%%%%%%%%%%%%%%%%%%%%%%%%%%%%%%%%%%%%%%%%%%%%%%%%
%%%%%%%%%%%%%%%%%%%%%%%%%%%%%%%%%%%%%%%%%%%%%%%%%%%%%%%%%%%%%%%%%%%%%%%%%%%%%%%%%%%%%%%%%%%%%%%%%%%%%% 
%%%%
The content and main results of this article are as follows. 
In order to give a local description of leaves of a foliation $\F$ of an $n$-dimensional smooth manifold $M$
we use the classical approach of transversal sections as well as algebraic and analytic properties of 
vertex operator algebras. 
%%%%
We chose two sets of points on $M$ and on a basis $\U$ of transversal sections with corresponding domains of 
local coordinates. 
%%%%
Points on $M$ and $\U$ are then endowed with sets of a $V$ vertex operator algebra elements.  
%%%%
%%%%%%%%%%%%%%%%%%%%%%%%%%%%%%%%%%%%%%%%%%%%%%%%%%%%%%%%%%%%%%%%%%%%%%%%%%%%%%%%%%%%%%%%%%%%%%
%%%%
By taking algebraic completions $\overline{W}$ of elements $W$ of the category ${\mathcal W}$ of $V$-modules, 
we formulate the definition of spaces $\widehat{W}^q_r$, $q$, $r \ge 0$,  
of special vectors ${\bf X}$ (exlicitely defined in \eqref{overphi0}) of $\overline{W}$-valued 
rational forms combined with sets of vertex operators.    
%%
%%%%%%%%%%%%%%%%%%%%%%%%%%%%%%%%%%%%%%%%%%%%%%%%%%%%%%%%%%%%%%%%%%%%%%%%%%%%%%%%%%%
%%%%%%%%%%%%%%%%%%%%%%%%%%%%%%%%%%%%%%%%%%%%%%%%%%%%%%%%%%%%%%%%%%%%%%%%%%%%%%%%%%%
%%%% 
For a set of formal complex variables $(z_1, \ldots, z_s)$ 
we introduce the space $\widehat{W}_{(z_1, \ldots, z_s)}$ of algebraic completion of 
the graded (with respect to Virasoro algebra $L_W(0)$-mode) 
  space of differential form-valued vectors 
%% 
%%%%%%%%%%%%%%%%%%%%%%%%%%%%%%%%%%%%%%%%%%%%%%%%%%%%%%%%%%%%%%%%%%%%%%%%%%%%%%%%%%%%%%%%%%%%%%%%%%%%%%%%%%%%%
%%%%
\begin{equation}
\label{overphi0}
{\bf X}(v_1, z_1; \ldots;  v_s, z_s)= \left[  
X\left(v_1, z_1 dz_{ {\it i}(1)}; 
\ldots; v_s, z_s \; dz_{{\it i}(s)} \right) \right], 
\end{equation}       
where ${\it i}(j)$, $j=1, \ldots, s$, are cycling permutations of $(1, \dots, s)$ starting with $j$, 
and we denote by $\left[.\right]$ the vector with elements given by mappings $X$.  
In cases where it is clear which set of formal variables is used we skip $(z_1, \ldots, z_s)$ from 
notations and denote $\widehat{W}_{(z_1, \ldots, z_s)}$ as $\widehat{W}$.  
%% 
%%%%%%%%%%%%%%%%%%%%%%%%%%%%%%%%%%%%%%%%%%%%%%%%%%%%%%%%%%%%%%%%%%%%%%%%%%%%%%%%%%%%%%%%%%%%%%%%%%%%%%%%%%%%%%%%%
%%%%
Assuming that there exists a non-degenerate bilinear pairing $(. , .)$ on $\widehat{W}_{(z_1, \ldots, z_s)}$,   
we denote by 
$\widehat{W}^*_{(z_1, \ldots, z_s)}$ the dual to $\widehat{W}_{(z_1, \ldots, z_s)}$  with respect to $(. , .)$.  
%%
%%%%%%%%%%%%%%%%%%%%%%%%%%%%%%%%%%%%%%%%%%%%%%%%%%%%%%%%%%%%%%%%%%%%%%%%%%%%%%%%%%%%%%%%%%%%%
%%%%
In case when elements $(z_1, \ldots,  z_s)$ are associated to certain local coordinates of 
 $l$ points $(p_1, \ldots, p_l)$ on $M$, 
we denote $\widehat{W}_{(z_1, \ldots, z_s)}$ by $\widehat{W}_{(p_1, \ldots, p_l)}$, 
and when $(z_1, \ldots,  z_s)$
are substituted by local coordinates $(t_{p_1}, \ldots, t_{p_s})$ in vicinities  
of $(p_1, \ldots, p_l)$, we replace $\widehat{W}_{(z_1, \ldots, z_s)}$ by    
 by $\widehat{W}_{(t_{p_1}, \ldots,t_{p_s})}$.  
%%
%%%%%%%%%%%%%%%%%%%%%%%%%%%%%%%%%%%%%%%%%%%%%%%%%%%%%%%%%%%%%%%%%%%%%%%%%%%%%%%%%%%%%%%%%%%%
%%%%
For fixed $\theta \in \widehat{W}^*_{(z_1, \ldots, z_s)}$,   
and varying elements of $\widehat{W}_{(z_1, \ldots, z_s)}$ we  
 consider a vector of matrix elements of the form 
\begin{equation}
\label{toppa}
\Omega(X(v_1, z_1; \ldots; v_s, z_s))
 = \left( \theta, X(v_1, z_1; \ldots; v_s, z_s) \right)  \in \C((z)),  
\end{equation}
where $X(v_1, z_1; \ldots; v_s, z_s)$ r
depends implicitly on $v_i \in V$, $1 \le i \le s$.  
%%
%%
%%%%%%%%%%%%%%%%%%%%%%%%%%%%%%%%%%%%%%%%%%%%%%%%%%%%%%%%%%%%%%%%%%%%%%%%%%%%%%%%%%%%%%%%%%%%%%%%
%%%
We may view the vector ${\bf X}(v_1, z_1; \ldots; v_s, z_s)$ of the space $\widehat{W}$ as a 
section of a fiber bundle over a collection of non-intersecting punctured discs 
 $(D^\times_{z_1}, \ldots, D^\times_{z_s} )= 
(Spec_{z_j}\; \C( (z_j) )$, $1 \le j \le s$,  
 with an ${\rm End} \left( \widehat{W}_{(z_1, \ldots, z_s)}\right)$-valued fiber 
$X(v_1, z_1; \ldots; v_s, z_s) \in \widehat{W}_{(z_1, \ldots, z_s)}$.  
%%
%%%%%%%%%%%%%%%%%%%%%%%%%%%%%%%%%%%%%%%%%%%%%%%%%%%%%%%%%%%%%%%%%%%%%%%%%%%%%%%%%%%%%%%%%%%%%%%%%%%%
%%%%%%%%%%%%%%%%%%%%%%%%%%%%%%%%%%%%%%%%%%%%%%%%%%%%%%%%%%%%%%%%%%%%%%%%%%%%%%%%%%%%%%%%%%%%%%%%%%%%
%%%%%
In this paper we explain how to construct the vertex operator algebra $V$-bundle mentioned above 
 in the case when it 
 carries an action of the group ${\rm Aut}_l \;  \Oo^{(n)}$ of local  
 coordinates changes in vicinities of $l$ points on $M$.   
%%
%%%%%%%%%%%%%%%%%%%%%%%%%%%%%%%%%%%%%%%%%%%%%%%%%%%%%%%%%%%%%%%%%%%%%%%%%%%%%%%%%%%%%%%%%%%%%%%%%%%%
%%%%
 This means that the action of the group 
${\rm Aut}_l\; \Oo^{(n)}   
={\rm Aut}_1\; {\Oo}^{(n)} \times \ldots \times {\rm Aut}_l\; {\Oo}^{(n)}$  
comes about by exponentiation 
of the action of vertex operator algebra $\left({\it Der}_j \; \Oo^{(n)} \right)$,  
 $1 \le j \le l$,   
via the action on $\widehat{W}_{(z_1, \ldots, z_s)}$.   
%% 
%%%%%%%%%%%%%%%%%%%%%%%%%%%%%%%%%%%%%%%%%%%%%%%%%%%%%%%%%%%%%%%%%%%%%%%%%%%%%%%%%%%%%%%%%%%%%%%%%%%%%%
%%%%%%%%%%%%%%%%%%%%%%%%%%%%%%%%%%%%%%%%%%%%%%%%%%%%%%%%%%%%%%%%%%%%%%%%%%%%%%%%%%%%%%%%%%%%%%%%%%%%%%
%%%%
 The representation in term of formal series in $(t_{ p_1}, \ldots, t_{ p_l})$ 
   allows us to find the precise transformation formula 
 for all elements of $\widehat{W}_{(p_1, \ldots, p_l)}$
 under the action of ${\rm Aut}_l\; \Oo^{(n)}$.   
%%
%%
%%%%%%%%%%%%%%%%%%%%%%%%%%%%%%%%%%%%%%%%%%%%%%%%%%%%%%%%%%%%%%%%%%%%%%%%%%%%%%%%%%%%%
%%%%
We then use this formula to give an intrinsic geometric meaning to sections 
${\bf X}(p_1, \ldots, p_s)$ of the fiber bundle in coordinate-free formulation.   
%%%%
 Namely, we
attach to each admissible vertex operator algebra module $V$-module $W$ (i.e., 
satisfying certain properties)    
 a fiber bundle $\W_M$ on an arbitrary smooth manifold $M$.  
%%
%%%%%%%%%%%%%%%%%%%%%%%%%%%%%%%%%%%%%%%%%%%%%%%%%%%%%%%%%%%%%%%%%%%%%%%%%%%%%%%%%%%%%%%%%%%%%%
%%%%%%%%%%%%%%%%%%%%%%%%%%%%%%%%%%%%%%%%%%%%%%%%%%%%%%%%%%%%%%%%%%%%%%%%%%%%%%%%%%%%%%%%%%%%%%%
%%%%
In Section \ref{bundle} we show that the bundle $\W_{M/\F}$ constructed is canonical, i.e., its 
sections do not depend on changes 
$(t_{ p_1}, \ldots, t_{ p_l}) \mapsto ( \widetilde{t}_{ p_1}, \ldots, \widetilde{t}_{ p_l} )$ of coordinates   
around points $(p_1, \ldots, p_l)$ on $M$.  
%%
%%%%%%%%%%%%%%%%%%%%%%%%%%%%%%%%%%%%%%%%%%%%%%%%%%%%%%%%%%%%%%%%%%%%%%%%%%%%%%%%%%%%%%%%%%%%%%%%%%%%%%%%%%%%%%%%%
%%%%
 To keep elements of $\widehat{W}^q_r$ coherent with respect to actions of the coboundary operators $\Delta^q_r$
 shifting indices $q$ and $r$, 
we apply certain analytic restriction on their characteristics provided by values of non-degenerate bilinear forms 
of entries in $\widehat{W}^q_r$-vectors.  
The spaces $\widehat{W}^q_r$ are defined on cosimplicial domains chosen on transversal sections of $\F$. 
%%  
%%%%%%%%%%%%%%%%%%%%%%%%%%%%%%%%%%%%%%%%%%%%%%%%%%%%%%%%%%%%%%%%%%%%%%%%%%%%%%%%%%%%%%%%%%%%%%%%%%%%%%%%%%
%%%%%%%%%%%%%%%%%%%%%%%%%%%%%%%%%%%%%%%%%%%%%%%%%%%%%%%%%%%%%%%%%%%%%%%%%%%%%%%%%%%%%%%%%%%%%%%%%%%%%%%%%%%
%%%%  
Then we formulate definition of   
the category of vertex operator algebra bundles defined on $M/\F$.  
%%%%
 The spaces $\widehat{W}^q_r$ associated to the category ${\mathcal W}$ of admissible $V$-modules
defined in Section \ref{cosimplicial}.    
%% 
%%
%%%%%%%%%%%%%%%%%%%%%%%%%%%%%%%%%%%%%%%%%%%%%%%%%%%%%%%%%%%%%%%%%%%%%%%%%%%%%%%%%%%%%%%%%%%%%%%%
%%%%%%%%%%%%%%%%%%%%%%%%%%%%%%%%%%%%%%%%%%%%%%%%%%%%%%%%%%%%%%%%%%%%%%%%%%%%%%%%%%%%%%%%%%%%%%%%
%%%%
The spaces $C^q_r$ of vectors of characteristics $[\Omega{X}]$ of entries of vectors ${\bf X}$  
form \cite{Huang} a double chain-cochain complex $(C^q_r, \delta^q_r)$ 
where $\delta^q_r {\bf X} =[\Omega( \Delta^q_r X)]$.  
The standard definition of cohomology of this complex is taken as cohomology of $Hol(M, \F)$. 
%%
%%%%%%%%%%%%%%%%%%%%%%%%%%%%%%%%%%%%%%%%%%%%%%%%%%%%%%%%%%%%%%%%%%%%%%%%%%%%%%%%%%%%%%%%%%%%%%%%%
%%%%%%%%%%%%%%%%%%%%%%%%%%%%%%%%%%%%%%%%%%%%%%%%%%%%%%%%%%%%%%%%%%%%%%%%%%%%%%%%%%%%%%%%%%%%%%%%%
%%%%
We show that elements of the spaces $\widehat{W}$ are invariant torsors with 
respect to the group of foliation preserving changes of transversal basis and local coordinates. 
%%%%
Though the construction of $\W_{M/\F}$-bundle does not depend neither on the choice of transversal basis 
nor on the choice of coordinates on $M$,  
it does depend on the choice of vertex operator algebra elements  
%%%%
 as well as on a particular element of the category
 $\W$ of admissible $V$-modules.        
%%
%%%%%%%%%%%%%%%%%%%%%%%%%%%%%%%%%%%%%%%%%%%%%%%%%%%%%%%%%%%%%%%%%%%%%%%%%%%%%%
%%%
The construction involves 
torsors and twists of a vertex operator algebra modules by the group of 
automorphisms of local coordinates transformations (independent for each chosen point 
on leaves of a foliation $\F$) of non-intersecting 
domains 
of a number of points on $M$.  
%% 
%%%%%%%%%%%%%%%%%%%%%%%%%%%%%%%%%%%%%%%%%%%%%%%%%%%%%%%%%%%%%%%%%%%%%%%%%%%%%%%%%%%%

%%%%%%%%%%%%%%%%%%%%%%%%%%%%%%%%%%%%%%%%%%%%%%%%%%%%%%%%%%%%%%%%%%%%%%%%%%%%%%%%%%%%%%%%%%%%%%%%%%%%%%
%%%%%%%%%%%%%%%%%%%%%%%%%%%%%%%%%%%%%%%%%%%%%%%%%%%%%%%%%%%%%%%%%%%%%%%%%%%%%%%%%%%%%%%%%%%%%%%%%%%%%%
%%%%%%%%%%%%%%%%%%%%%%%%%%%%%%%%%%%%%%%%%%%%%%%%%%%%%%%%%%%%%%%%%%%%%%%%%%%%%%%%%%%%%%%%%%%%%%%%%%%%%% 
%%
%%
%%%%  
The plan of the paper is the following. 
Section \ref{puzo} contains information on vertex operator algebras, their modules and properties. 
In Section \ref{toss} we recall, following \cite{BZF}, the standard definitions of 
differentials and rational functions considered on abstract and standard discs.  
In Section \ref{bundle} we consider the general notion of a vertex operator algebra bundle $\W_{M/\F}$ 
defined on the leaf space of a foliated smooth complex manifold $M$.   
In Section \ref{cosimplicial} the category of vertex operator algebra bundles on leafs of $M/\F$ 
and transversal sections for a foliation $\F$ on $M$ 
is considered. 
%%

%%%%%%%%%%%%%%%%%%%%%%%%%%%%%%%%%%%%%%%%%%%%%%%%%%%%%%%%%%%%%%%%%%%%%%%%%%%%%%%%%%%%%%%%%%%
%%%%%%%%%%%%%%%%%%%%%%%%%%%%%%%%%%%%%%%%%%%%%%%%%%%%%%%%%%%%%%%%%%%%%%%%%%%%%%%%%%%%%%%%%%%
%%%%%%%%%%%%%%%%%%%%%%%%%%%%%%%%%%%%%%%%%%%%%%%%%%%%%%%%%%%%%%%%%%%%%%%%%%%%%%%%%%%%%%%%%%%%
%%
%%%%  
There exists a bunch of ways to apply the construction of this paper of a vertex operator algebra bundle 
on the space of leaves for a foliation defined on a smooth manifold. 
%%
%%%%%%%%%%%%%%%%%%%%%%%%%%%%%%%%%%%%%%%%%%%%%%%%%%%%%%%%%%%%%%%%%%%%%%%%%%%%%%%%%%%%%%%%%%%%%%%%%%%
%%%%
The first obvious aim is to apply this study to 
  techniques of codimension one foliations discussions reflected in 
  \cite{Ghys, BG, BGG, Gal}. 
%%
%%%%%%%%%%%%%%%%%%%%%%%%%%%%%%%%%%%%%%%%%%%%%%%%%%%%%%%%%%%%%%%%%%%%%%%%%%%%%%%%%%%%%%%%%%%%%%%%%%% 
%%%% 
The problem of finding non-vanishing cohomological invariants for the space of a foliation leaves,
and the problem of distinguishing kinds of compact and non-compact leaves 
examples of foliations 
(such as the Reeb foliation of the full torus), 
are among important questions in the theory of foliations. 
%% 
%%%%%%%%%%%%%%%%%%%%%%%%%%%%%%%%%%%%%%%%%%%%%%%%%%%%%%%%%%%%%%%%%%%%%%%%%%%%%%%%%%%%%%%%%%%%%%%%%%%%
%%%%
%%
The category ${\mathfrak W}_{M/\F}$ introduced in this paper 
for vertex operator algebra bundles $\W_{M/\F}$   
defined on leaf spaces of foliations  
will be used in establishing corresponding characteristic classes theory.   
%%
%%%%%%%%%%%%%%%%%%%%%%%%%%%%%%%%%%%%%%%%%%%%%%%%%%%%%%%%%%%%%%%%%%%%%%%%%%%%%%%%%%%%%%
%%%%
One would be interested in finding possible relations of the cohomology theory of this paper 
with the chiral de-Rham complex on a smooth manifold introduced in \cite{MSV}.  
%%
%%%%%%%%%%%%%%%%%%%%%%%%%%%%%%%%%%%%%%%%%%%%%%%%%%%%%%%%%%%%%%%%%%%%%%%%%%%%%%%%%%%%%%%%%%%%%%%%%%%%%
%%%%
 We are also able to provide  
 applications of vertex operator algebra $V$-bundles $\W_{M/\F}$ for foliations 
of complex manifolds  
\cite{Bott, Fei, Wag,  FMS, TUY} 
in deformation theory \cite{Ma, BG, HinSch, GerSch, Kod},  
and algebraic topology in general.   
%%
%%%%%%%%%%%%%%%%%%%%%%%%%%%%%%%%%%%%%%%%%%%%%%%%%%%%%%%%%%%%%%%%%%%%%%%%%%%%%%%%%%%%%%%%%%%%%%%%%%%%%%%%%
%%
Constructions introduced in this paper will be useful for purposes 
of cosimplitial cohomology \cite{Wag} of manifolds. 
%% 
%%%%%%%%%%%%%%%%%%%%%%%%%%%%%%%%%%%%%%%%%%%%%%%%%%%%%%%%%%%%%%%%%%%%%%%%%%%%%%
%%%% 
Vertex operator algebra bundles on complex manifolds   
can be used in construction of various generalizations of the Bott--Segal theorem \cite{BS}.  
%%
%%%%%%%%%%%%%%%%%%%%%%%%%%%%%%%%%%%%%%%%%%%%%%%%%%%%%%%%%%%%%%%%%%%%%%%%%%%%%%%%%
%%%% 
Finally, we would like to mention possible connections to the Losik's theory of 
foliated manifolds. 
In \cite{Lo} Losik has introduced a smooth structure on the leaf space 
$M/\F$ of a foliation $\F$ of codimension $p$ on
a smooth manifold $M$ that allows to apply to $M/\F$ the
same techniques as to smooth manifolds.
%%
%%%%%%%%%%%%%%%%%%%%%%%%%%%%%%%%%%%%%%%%%%%%%%%%%%%%%%%%%%%%%%%%%%%%%%%%%%%%%%%%%
%%%%  
Characteristic classes for 
foliations as elements of the cohomology of certain bundles over the
leaf space $M/\F$ were defined.   
%%
%%%%%%%%%%%%%%%%%%%%%%%%%%%%%%%%%%%%%%%%%%%%%%%%%%%%%%%%%%%%%%%%%%%%%%%%%%%%%%%%%%%%%%%%%%%%%%%%%
%%%%
We hope to develop this approach by applying vertex operator algebra techniques and constructions 
provided in this paper. 
%% 
%%%%%%%%%%%%%%%%%%%%%%%%%%%%%%%%%%%%%%%%%%%%%%%%%%%%%%%%%%%%%%%%%%%%%%%%%%%%%%%%%%%%%%%%%%%%%
%%%%%%%%%%%%%%%%%%%%%%%%%%%%%%%%%%%%%%%%%%%%%%%%%%%%%%%%%%%%%%%%%%%%%%%%%%%%%%%%%%%%%%%%%%%%%
%%
\section{Vertex operator algebras and their modules}  
\label{puzo} 
%%
%%%%%%%%%%%%%%%%%%%%%%%%%%%%%%%%%%%%%%%%%%%%%%%%%%%%%%%%%%%
%%
In this Section we recall definitions and basic properties of 
 vertex operator algebras and their generalized modules 
 \cite{B, BZF, DL, FLM, BZF, H2,  K}.  
%%
%%%%%%%%%%%%%%%%%%%%%%%%%%%%%%%%%%%%%%%%%%%%%%%%%%%%%%%%%%%%%%%%%%%%%%%%%%%%%%%%%%%%%%%%%%5
%%%% 
%%
A vertex operator algebra   
$(V,Y_V,\mathbf{1}_V, c)$, of Virasoro algebra central charge $c$, 
  consists of a $\Z$-graded complex vector space 
$V = \bigoplus_{s\in\Z} V_{(s)}$, with finite-dimensional grading subspaces $V_{(s)}$ 
$\dim V_{(s)}<\infty$ for each $s\in \Z$,   
equipped with 
a linear map  
%% 
%%%%%%%%%%%%%%%%%%%%%%%%%%%%%%%%%%%%%%%%%%%%%%%%%%%%%%%%%%
%%%% 
$Y_V: V\rightarrow {\rm End }(V)[[z,z^{-1}]]$,  
 for a formal complex parameter $z$ and a 
distinguished vector $\mathbf{1}_V \in V$.  
%%%%
%%
The vertex operator for $v\in V$ is given by 
$Y_V(v,z) = \sum_{s\in\Z}v(s)z^{-s-1}$,    
with components $(Y_V(v))_s=v(s)\in {\rm End \;}(V)$,
 with the property $Y_V(v,z)\mathbf{1}_V = v+O(z)$.
%%%%
%%
%%%%%%%%%%%%%%%%%%%%%%%%%%%%%%%%%%%%%%%%%%%%%%%%%%%%%%%%%%%%%%%%%%%%%%%%%%%%%%%%%%%%%%%%%%%%%%%%%%%
%%%%%%%%%%%%%%%%%%%%%%%%%%%%%%%%%%%%%%%%%%%%%%%%%%%%%%%%%%%%%%%%%%%%%%%%%%%%%%%%%%%%%%%%%%%%%%%%%%%
%%%% 
In this paper we apply the following restrictions on the grading 
of a vertex operator algebra $V$ or 
its module $W$. 
%%%%%%%%%%%%%%%%%%%%%%%%%%%%%%%%%%%%%%%%%%%%%%%%%%%%%%%%%%%%%%%%%%%%%%%%%%%%%%%%%%%%%%%%%%%%%%%%%%
%%%% 
A vertex operator algebra $V$-module $W$ 
 is a vector space 
$W$ equipped with a vertex operator map  
$Y_W: V\otimes W \to W[[z, z^{-1}]]$, and  
 $v \otimes w \mapsto  Y_W(v, z)w=\sum_{s\in \Z}(Y_W)_s(v,w)z^{-s-1}$. 
 $W$ is also subject of actions of 
and linear operators $L_W(0)$ and $L_W(-1)$ ($0$ and $-1$ Virasoro modes) 
  satisfying the following conditions. 
%%
%%%%%%%%%%%%%%%%%%%%%%%%%%%%%%%%%%%%%%%%%%%%%%%%%%%%%%%%%%%%%%%%%%%%%%%%%%%%%%%%%%%%%%%%%%%%%
%%%%%%%%%%%%%%%%%%%%%%%%%%%%%%%%%%%%%%%%%%%%%%%%%%%%%%%%%%%%%%%%%%%%%%%%%%%%%%%%%%%%%%%%%%%%%
%%%%
One assumes that $V_{(s)}=0$ for $s\ll 0$.  
The vector space $W$ is $\mathbb C$-graded, that is, 
$W=\bigoplus_{\alpha \in \C } W_{(\alpha)}$,  
such that $W_{(\alpha)}=0$, when the real part of $\alpha$ is sufficiently negative.  
%%
%%%%%%%%%%%%%%%%%%%%%%%%%%%%%%%%%%%%%%%%%%%%%%%%%%%%%%%%%%%%%%%%%%%%%%%%%%%%%%%%%%%%
%%%%
The result of a vertex operator $Y_{V, W}(u, z)(v, w)$, 
$u$, $v\in V$, $w\in W$,  contains only finitely many negative power terms,
%%
%%%%%%%%%%%%%%%%%%%%%%%%%%%%%%%%%%%%%%%%%%%%%%%%%%%%%%%%%%%%%%%%%%%%%%%%%%%%%%%
that is 
 $Y_{V, W}(u, z)(v, w) \in (V, W)((z))$, i.e., belongs to the space of formal 
Laurent series in $z$ with coefficients in $(V, W)$. 
%%
%%%%%%%%%%%%%%%%%%%%%%%%%%%%%%%%%%%%%%%%%%%%%%%%%%%%%%%%%%%%%%%%%%%%%%%%%%%%%%%%%%%
%%%%
 Here $(V, W)$ and subscript ${}_{V, W}$ 
mean corresponding expression either for vertex operator algebra $V$ elements
or its module $W$.  
%%
%%%%%%%%%%%%%%%%%%%%%%%%%%%%%%%%%%%%%%%%%%%%%%%%%%%%%%%%%%%%%%%%%%%%%%%%%%%%%%%%%%%
%%
Let ${\rm Id}_{V, W}$ be the identity operator on $(V, W)$.
Then $Y_{V, W}(\mathbf{1}_V, z)={\rm Id}_{V, W}$.   
%% 
%%%%%%%%%%%%%%%%%%%%%%%%%%%%%%%%%%%%%%%%%%%%%%%%%%%%%%%%%%%%%%%%%%%%%%%%%%%%%%%%%%%%%%%%%%
%%%%
For $v\in V$, $Y_V(v, z)\mathbf{1}_V\in V[[z]]$ 
and $\lim_{z\to 0}Y_V(v, z)\mathbf{1}_V=v$. 
%%
%%%%%%%%%%%%%%%%%%%%%%%%%%%%%%%%%%%%%%%%%%%%%%%%%%%%%%%%%%%%%%%%%%%%%%%%%%%%%%%%%%%%%%%%%%%
%%%%  
We assume that for $W$ there exist non-degenerate bilinear pairing $(.,.)$, $W'\otimes W \to \C$, 
where $W'$ denotes the dual $V$-module to $W$.  
%% 
%%%%%%%%%%%%%%%%%%%%%%%%%%%%%%%%%%%%%%%%%%%%%%%%%%%%%%%%%%%%%%%%%%%%%%%%%%%%%%%%%%%%%%%%%
%%%%
 For $s\in \Z_+$, denote by $F_s\mathbb C$ the 
configuration space of $s$ ordered points in $\mathbb C$, 
$F_s\mathbb C=\{(z_1, \ldots, z_s)\in \mathbb C^s,  z_i \ne z_j, i\ne j\}$. 
%%
%%%%%%%%%%%%%%%%%%%%%%%%%%%%%%%%%%%%%%%%%%%%%%%%%%%%%%%%%%%%%%%%%%%%%%%%%%%%%%%%%%%%%%%%%%%%%%%%
%%%% 
For a space $A$ and arbitrary $\theta \in A^*$, for $A^*$ dual to $A$, 
a meromorphic function of several complex variables $(z_1, \dots, z_s)$ 
defined by a map  
$f: F_s \mathbb C \rightarrow A$,    
$(z_1, \ldots, z_s) \mapsto  R( f(z_1, \ldots, z_s))$,    
is called an $A$-valued rational function
if its characteristic  
$\Omega(f(z_1, \ldots, z_s) ) = (\theta, f(z_1, \ldots, z_s))$,
extends to a rational function denoted $R( f(z_1, \ldots, z_s))$ in $(z_1, \ldots, z_r)$
   on a larger domain and   
 admits poles at   
$z_i=z_j$, $i\ne j$, only.  
In particular, in this paper we consider the cases $A=\overline{W}$, $\widehat{W}$.  
%%
%%%%%%%%%%%%%%%%%%%%%%%%%%%%%%%%%%%%%%%%%%%%%%%%%%%%%%%%%%%%%%%%%%%%%%%%%%%%%%%%%%%%%%%%%%
%%%%
We assume that for $u$, $u_1$, $u_2 \in V$, the characteristics  
 $\Omega\left(Y_{V, W}(v_1, z_1) Y_{V, W}(v_2, z_2)v \right)$,  
 $\Omega \left(Y_{V, W}(v_2, z_2)Y_{V, W}(v_1, z_1)(v, w)\right)$,  and  
 $\Omega \left(Y_{V, W}(Y_V(v_1, z_1-z_2)v_2, z_2)(v, w)\right)$, 
 converge absolutely
in the regions $|z_1|>|z_2|>0$, $|z_2|>|z_1|>0$, $|z_2|>|z_1-z_2|>0$,   
correspondingly to a common rational function  
in $z_1$, $z_2$. 
Poles of these characteristics are only allowed at $z_1=0=z_2$, and $z_1=z_2$.   
%%
%%%%%%%%%%%%%%%%%%%%%%%%%%%%%%%%%%%%%%%%%%%%%%%%%%%%%%%%%%%%%%%%%%%%%%%%%%%%%%%%%%%%%%%%%%
%% 
The role of a grading operator for $V$ is played by the zero Virasoro mode with 
$L_V(0)v=rv$ for $v\in V_{(r)}$.
%%%%
 Then for $v\in V$ one has 
\[
[L_{V, W}(0), Y_{V, W}(v, z)]=Y_{V, w}(L_V(0)v, z)+z\frac{d}{dz}Y_{V, W}(v, z). 
\]  
%%
%%%%%%%%%%%%%%%%%%%%%%%%%%%%%%%%%%%%%%%%%%%%%%%%%%%%%%%%%%%%%%%%%%%%%%%%%%%%%%%%%%%%%%%%
%%%%
For $w\in W_{(\alpha)}$, there exists
$n_0 \in \Z_+$ such that 
$(L_W(0)-\alpha)^{n_0} w=0$.   
%%%%
%%%%%%%%%%%%%%%%%%%%%%%%%%%%%%%%%%%%%%%%%%%%%%%%%%%%%%%%%%%%%%%%%%%%
%%%%
For $v\in V$ the operator $L_V(-1)$ is 
given by 
\[
L_V(-1)v=\res_z z^{-2}Y_{V}(v, z)\one_V=Y_{(-2)}(v){\bf 1}_V, 
\] 
%%
%%%%%%%%%%%%%%%%%%%%%%%%%%%%%%%%%%%%%%%%%%%%%%%%%%%%%%%%%%%%%%%%%%%%%%%%%%%
%%%% 
%%
\[
\frac{d}{dz}Y_V(v, z)=Y_{V, W}(L_V(-1)v, z)=[L_{V, W}(-1), Y_V(v, z)].
\]  
 We denote $\wt(v)=k$ the weight for $v\in V_{(k)}$. 
%% 
%%%%%%%%%%%%%%%%%%%%%%%%%%%%%%%%%%%%%%%%%%%%%%%%%%%%%%%%%%%%%%%%%%%%%%%%%%%%%
%%%%
For $v\in V$, the translation property for vertex operators can be written as
\[
 Y_W(v, z) = e^{-z' L_W(-1)} Y_{W}(v, z+z') e^{z' L_W(-1)}, 
\]
 $z' \in \mathbb C$.
%%
%%%%%%%%%%%%%%%%%%%%%%%%%%%%%%%%%%%%%%%%%%%%%%%%%%%%%%%%%%%%%%%%%%%%%%%%%%%%%%%%%%%%%%%%%%%%%%
%%%%
For $v\in V$, it follows  
\[
\frac{d}{dz}Y(v, z)=Y(L_V(-1)v, z).
\] 
%% 
%%%%%%%%%%%%%%%%%%%%%%%%%%%%%%%%%%%%%%%%%%%%%%%%%%%%%%%%%%%%%%%%%%%%%%%%%%%%%%%%%%%%%%%%%%
%%%%
For $a \in \C$, the conjugation property with respect to the grading operator $L_W(0)$ is given by 
\[
 a^{ L_W(0) } \; Y_W(v,z) \; a^{  -L_W(0) }= Y_W \left(a^{ L_W(0) } v, az\right).
\]     
%%
%%%%%%%%%%%%%%%%%%%%%%%%%%%%%%%%%%%%%%%%%%%%%%%%%%%%%%%%%%%%%%%%%%%%%%%%%%%%%%%%%%%%%%%%
%%%%%%%%%%%%%%%%%%%%%%%%%%%%%%%%%%%%%%%%%%%%%%%%%%%%%%%%%%%%%%%%%%%%%%%%%%%%%%%%%%%%%%%%
%%%%
A vertex operator algebra $V$ satisfying conditions above is called conformal of central  
charge $c \in \C$, 
if there exists a non-zero conformal vector $\omega \in V_2$ such that the
Fourier coefficients $L_V(r)$ of the corresponding vertex operator
$Y (\omega, z) = \sum_{s \in \Z} L_V(k) z^{-s-2}$,   
is determined by Virasoro modes $L_V(r): V\to V$ subject to the commutation relations 
\[
[L_V(s), L_V(r)]=(s-r)L_V(s+r)+\frac{c}{12}(s^3-s)\delta_{s, -r}\; {\rm Id}_V.
\]  
%% 
%%%%%%%%%%%%%%%%%%%%%%%%%%%%%%%%%%%%%%%%%%%%%%%%%%%%%%%%%%%%%%%%%%%%%%%%%%%%%%%%%%
%%%%
%%
In \cite{BZF}, $v \in V$, the following formula was derived  
\[
\left[L_W(r), Y_W (v, z) \right] 
=  \sum_{r \geq -1}  
 \frac{1}{(r+1)!} \partial^{r+1}_z z^{r+1}  
\;  
Y_W (L_V(r) v, z). 
\]     
%%
%%%%%%%%%%%%%%%%%%%%%%%%%%%%%%%%%%%%%%%%%%%%%%%%%%%%%%%%%%%%%%%%%%%%%%%%%%%%%%%%%%
%%%%
For a vector field 
$\beta(z)\partial_z= \sum_{r \geq -1} \beta_r z^{r+1} \partial_z$,    
 $\beta(z)\partial_z \in {\rm Der} \Oo^{(1)}$,  
which belongs to the local Lie algebra of the group ${\rm Aut}\; \Oo^{(n)}$,    
 let us introduce the operator 
${\overline \beta }= - \sum_{r \geq -1} \beta_r L_W(r)$. 
%%
%%%%%%%%%%%%%%%%%%%%%%%%%%%%%%%%%%%%%%%%%%%%%%%%%%%%%%%%%%%%%%%%%%%%%%%%%%%%%%%%%%%%%
%%%% 
In \cite{BZF} they prove the following formula:     
\[
\left[{\overline \beta}, Y_W (v, z) \right]  
=  \sum_{r \geq -1}  
 \frac{1}{(r+1)!} \left(\partial^{r+1}_z \beta(z)  \right)\;  
Y_W (L_V(r) v, z).
\]    
%%
%%%%%%%%%%%%%%%%%%%%%%%%%%%%%%%%%%%%%%%%%%%%%%%%%%%%%%%%%%%%%%%%%%%%%%%%%%%%%%%%%%%%%%%%%%%%
%%%%%%%%%%%%%%%%%%%%%%%%%%%%%%%%%%%%%%%%%%%%%%%%%%%%%%%%%%%%%%%%%%%%%%%%%%%%%%%%%%%%%%%%%%%% 
%%%%
%% 
A vertex operator algebra $V$-module $W$ is called quasi-conformal if 
  it carries an action of ${\rm Der}\; \Oo^{(n)}$ on an $n$-dimensional smooth manifold $M$ 
such that commutation formula above 
 holds for any 
$v\in  V$, and $z=z_j$, $1 \le j \le n$, the element $L_W(-1) = - \partial_z$     
acts as the translation operator 
$L_W(0) = - z \partial_z$,    
 acts semi-simply with integral
eigenvalues, and the Lie subalgebra ${\rm Der}_+ \; \Oo^{(n)}$ acts locally nilpotently on $M$. 
%% 
%%%%%%%%%%%%%%%%%%%%%%%%%%%%%%%%%%%%%%%%%%%%%%%%%%%%%%%%%%%%%%%%%%%%%%%%%%%%%%%%%%%%%%%%%%
%%%% 
%% 
 A vector $w \in W$ of a quasi-conformal 
vertex operator algebra $V$ is called 
 primary of conformal dimension $\nu \in  \mathbb Z_+$ if  
$L_W(k) w = 0$, $k > 0$,  
 $L_W(0) \cdot w = \nu w$.   
%%
%%%%%%%%%%%%%%%%%%%%%%%%%%%%%%%%%%%%%%%%%%%%%%%%%%%%%%%%%%%%%%%%%%%%%%%%%%%%%%%%%%%%%%%
%%%%
In addition to that, 
we assume that $V$-module $W$ admits an action of $Der_j \;  \Oo^{(n)}$.     
The element $\left(- \partial_{t_p}\right)$ plays a role of the translation operator 
  on $W_{t_p}$ with integral eigenvalues, and 
  the Lie subalgebra $\left(Der_+\right)_j \; \Oo^{(n)}$ acts locally nilpotently.  
The $\C$-grading operator is provided by the mode $L_W(0)$, i.e., 
 $L_W(0)=\left(- t_p \; \partial_{t_p}\right)$.    
%%
%%%%%%%%%%%%%%%%%%%%%%%%%%%%%%%%%%%%%%%%%%%%%%%%%%%%%%%%%%%%%%%%%%%%%%%%%%%%%%%%%%%%%%%%%
%%%%%%%%%%%%%%%%%%%%%%%%%%%%%%%%%%%%%%%%%%%%%%%%%%%%%%%%%%%%%%%%%%%%%%%%%%%%%%%%%%%%%%%%%
%%%% 
Finally, let us assume that  
the action of the Lie algebra $Der_j\; \Oo^{(n)}$ on $\widehat{W}_{  ( t_{ p_1}, \ldots, t_{ p_l}  )  }$  
 can be exponentiated to an action of the
group ${\rm Aut}_j\; \Oo^{(n)}$.    
%%

%%%%%%%%%%%%%%%%%%%%%%%%%%%%%%%%%%%%%%%%%%%%%%%%%%%%%%%%%%%%%%%%%%%%%%%%%%%%%%%%%%%%%%%%%%%%%%%%
%%%%
Denote by ${\mathfrak \W}$ the category of $V$-vertex operator algebra admissible modules $W$ 
that satisfy these properties in addition to  
all related properties of Section \ref{puzo}. 
%%
%%%%%%%%%%%%%%%%%%%%%%%%%%%%%%%%%%%%%%%%%%%%%%%%%%%%%%%%%%%%%%%%%%%%%%%%%%%%%%%%%%%%%%%%%
%%%%%%%%%%%%%%%%%%%%%%%%%%%%%%%%%%%%%%%%%%%%%%%%%%%%%%%%%%%%%%%%%%%%%%%%%%%%%%%%%%%%%%%%
%%%%  
%% 
Denote by $\overline{W}$ the algebraic completion of $W$,  
$\overline{W}=\prod_{r\in \mathbb C} W_{(r)}=(W')^{*}$.  
%% 
%%%%%%%%%%%%%%%%%%%%%%%%%%%%%%%%%%%%%%%%%%%%%%%%%%%%%%%%%%%%%%%%%%%%%%%%%%%%%%%%%%%%%%%%%%%%%%
%%%%%%%%%%%%%%%%%%%%%%%%%%%%%%%%%%%%%%%%%%%%%%%%%%%%%%%%%%%%%%%%%%%%%%%%%%%%%%%%%%%%%%%%%%%%%%%%
\section{Differentials and rational functions on abstract discs}
%%%%
\label{toss}
In this Section we partially follow \cite{BZF} and describe 
 the setup needed for formulation of further results.  
%%%%
Let $p$ be a point on $M$, and $t_p$ be a local coordinate in a vicinity of $p$.   
%%%%
 We replace the field of Laurent series $\C((t_p))$ 
by any complete topological algebra non-canonically isomorphic to 
$\C((t_p))$.
%%
%%%%%%%%%%%%%%%%%%%%%%%%%%%%%%%%%%%%%%%%%%%%%%%%%%%%%%%%%%%%%%%%%%%%%%%%%%%%%%%%%%%%%%%%%%%
%%%%%%%%%%%%%%%%%%%%%%%%%%%%%%%%%%%%%%%%%%%%%%%%%%%%%%%%%%%%%%%%%%%%%%%%%%%%%%%%%%%%%%%%%%%
\subsection{Abstract discs}
%%%%
To introduce abstract discs on $M/\F$ it is possible to 
  consider the scheme underlying the $\C$-algebra $\C[[t_p]]$.   
 $\C[[t_p]]$ is the ring of complex-valued functions on the affine scheme  
 $D_{t_p} = {\rm Spec}\;  \C[[t_p]]$ which we call the standard disc $D_{t_p}$.  
%%%%
%%%%%%%%%%%%%%%%%%%%%%%%%%%%%%%%%%%%%%%%%%%%%%%%%%%%%%%%%%%%%%%%%%%%%%%%%%%%%%%%%%%%%%%
%%
As a topological space, $D_p$ can be described by the origin 
  corresponding to the maximal ideal $t_p \; \C[[t_p]]$ and 
 the generic point.      
A morphism from $D$ to an affine scheme
$Z = {\rm Spec}\;  \mathcal R$, where $\mathcal R$ is a $\C$-algebra, 
is a homomorphism of algebras 
$\mathcal R \to \C[[t_p]]$.
  Such a homomorphism can be constructed by realizing $\C[[t_p]]$ as a completion of $\mathcal R$. 
%%
%%%%%%%%%%%%%%%%%%%%%%%%%%%%%%%%%%%%%%%%%%%%%%%%%%%%%%%%%%%%%%%%%%%%%%%%%%%%%%%%%%%%%%%%%%%%%%%%%%%%%%%%
%%%% 
Geometrically, this is an identification of the disc $D_p$ with the formal neighborhood 
of a point on $M$.   
%%
%%%%%%%%%%%%%%%%%%%%%%%%%%%%%%%%%%%%%%%%%%%%%%%%%%%%%%%%%%%%%%%%%%%%%%%%%%%%%%%%%%%%%%%%%%%%%%%%%%%%%%%%%%
%%%%
An abstract disc is an affine scheme ${\rm Spec}\;  \mathcal R$, where $\mathcal R$ is a
$\C$-algebra isomorphic to $\C[[t_p]]$. 
%%    
%%
%%%%%%%%%%%%%%%%%%%%%%%%%%%%%%%%%%%%%%%%%%%%%%%%%%%%%%%%%%%%%%%%%%%%%%%%%%%%%%%%%%%%%%%%%%%%%%%%%%%%%%%%
%%%%%%%%%%%%%%%%%%%%%%%%%%%%%%%%%%%%%%%%%%%%%%%%%%%%%%%%%%%%%%%%%%%%%%%%%%%%%%%%%%%%%%%%%%%%%%%%%%%%%%%%
%%%%
On the abstract disc, the maximal ideal $t_p \C[[t_p]]$ has a preferred generator $t_p$. 
In contrast to that, on an abstract disc 
there is no preferred generator in the maximal ideal of ${\mathcal R}$, and there is no 
preferred coordinate. 
%% 
%%%%%%%%%%%%%%%%%%%%%%%%%%%%%%%%%%%%%%%%%%%%%%%%%%%%%%%%%%%%%%%%%%%%%%%%%%%%%%%%%%%%%%%%%%%
%%
Denote by $\Oo_p$ the completion of the local ring of $M$. 
Then $\mathcal O_p$ is non-canonically isomorphic 
to $\Oo = \C[[t_p]]$. To specify such an isomorphism, or equivalently, an isomorphism
between
$D_p = {\rm Spec}\;  \Oo_p$, 
and $D_{t_p} = {\rm Spec}\;  \C[[t_p]]$, 
we need to choose a formal coordinate $t_p$ at $p \in M$, 
i.e., a topological
generator of the maximal ideal $\mathfrak m_p$ of $\Oo_p$.
 In general there is no preferred formal
coordinate at $p\in M$, and $D_p$ is an abstract disc.  
%%
%%%%%%%%%%%%%%%%%%%%%%%%%%%%%%%%%%%%%%%%%%%%%%%%%%%%%%%%%%%%%%%%%%%%%%%%%%%%%%%%%%%%%%%
%%%%%%%%%%%%%%%%%%%%%%%%%%%%%%%%%%%%%%%%%%%%%%%%%%%%%%%%%%%%%%%%%%%%%%%%%%%%%%%%%%%%%%%
%%%% 
\subsection{Rational functions attached to discs}
To construct a $\widehat{W}$-valued vertex operator algebra bundle on $M/\F$   
 we would like to attach elements of $\widehat{W}$ to  
%%%%
 the both standard $D_{t_p} = {\rm Spec}\;  \C[[t_p]]$ and 
  abstract discs $D_p$, where $p$ is a point on $M/\F$.   
%%
%%%%%%%%%%%%%%%%%%%%%%%%%%%%%%%%%%%%%%%%%%%%%%%%%%%%%%%%%%%%%%%%%%%%%%%%%%%%%%%%%%%%%%%%%
%%%%
For a cohomology theory purposes 
 we attach also characteristics of $\widehat{W}$-elements 
represented in terms of rational functions on discs. 
%% 
%%%%%%%%%%%%%%%%%%%%%%%%%%%%%%%%%%%%%%%%%%%%%%%%%%%%%%%%%%%%%%%%%%%%%%%%%%%%%%%%%%%%%%%%%%%%
%%%%%%%%%%%%%%%%%%%%%%%%%%%%%%%%%%%%%%%%%%%%%%%%%%%%%%%%%%%%%%%%%%%%%%%%%%%%%%%%%%%%%%%%%%%%
%%%% 
 Let $\mathcal K_x$ be 
  the field of fractions of the ring of integers  
$\mathbb Z$ is the rational field $\mathbb Q$. 
%% 
%%%%%%%%%%%%%%%%%%%%%%%%%%%%%%%%%%%%%%%%%%%%%%%%%%%%%%%%%%%%%%%%%%%%%%%%%%%%%%%%%%%%%%%%%%%%%%%%
%%%%
%% 
We denote also by 
${\mathcal K}[(t_{p_1}, \ldots, t_{p_s})]$ 
the field of fractions of the polynomial ring  
 over a field ${\mathcal K}$ 
 as the field of rational functions 
 ${\mathcal K}((t_{p_1}, \ldots, t_{p_s}))=
\left\{(R_1(t_{p_1}, \ldots, t_{p_s}))/(R_2(t_{p_1}, \ldots, t_{p_s})):  
 R_1, R_2 \in {\mathcal K}[(t_{p_1}, \ldots, t_{p_s})] \right\}$.  
%% 
%%%%%%%%%%%%%%%%%%%%%%%%%%%%%%%%%%%%%%%%%%%%%%%%%%%%%%%%%%%%%%%%%%%%%%%%%%%%%%%%%%%%%%%%%%%%%%%%%%%%%%
%%%%
 For a coordinate $t_p$ on 
$D_p$, there exist isomorphisms $\mathcal O_p = \C[[t_p]]$ and $\mathcal K_p = \C((t_p))$. 
%%
%%%%%%%%%%%%%%%%%%%%%%%%%%%%%%%%%%%%%%%%%%%%%%%%%%%%%%%%%%%%%%%%%%%%%%%%%%%%%%%%%%%%%%%%%%%%%%%%%%%%%%%
%%%%
%% 
 We denote by $D_p$ and $D^\times_p$ at $p$ the disc and punctured disc 
  defined as ${\rm {\rm Spec}\; } \Oo_p^{(n)}$ and ${\rm {\rm Spec}\; } \; {\mathcal K}_p$)
 correspondingly. 
%%
%%%%%%%%%%%%%%%%%%%%%%%%%%%%%%%%%%%%%%%%%%%%%%%%%%%%%%%%%%%%%%%%%%%%%%%%%%%%%%%%%%%%%%%%%%
%%%%%%%%%%%%%%%%%%%%%%%%%%%%%%%%%%%%%%%%%%%%%%%%%%%%%%%%%%%%%%%%%%%%%%%%%%%%%%%%%%%%%%%%%%
%%%%
\subsection{Rational power differentials} 
In this Subsection we recall basic definitions related to differentials \cite{BZF, Sch}.    
 Let $k$ be a rational number. A $k$-differential defined on a manifold $M$ 
is a section of the $k$-th tensor power of the canonical line bundle $\omega$.  
%%
%%%%%%%%%%%%%%%%%%%%%%%%%%%%%%%%%%%%%%%%%%%%%%%%%%%%%%%%%%%%%%%%%%%%%%%%%%%%%%%%%%%%%%%%%%%%%%%
%%%%
Choosing a local coordinate $t_p$ arround a point $p \in M$  
we may trivialize $\omega^{\otimes k}$ by the non-vanishing 
section $(dt_p)^{\otimes k}$. 
%%
%% 
%%%%%%%%%%%%%%%%%%%%%%%%%%%%%%%%%%%%%%%%%%%%%%%%%%%%%%%%%%%%%%%%%%%%%%%%%%%%%%%%%%%%%%%%%%%%%%%%%
%%%%
Any section of $\omega^{\otimes k}$ may then be written as $f(t_p)(dt_p)^{\otimes k}$.  
 For another coordinate $\widetilde{t}_p = \rho(t_p)$,   
the same section will be written as $g(\widetilde{t}_p)(d\widetilde{t}_p)^{\otimes k}$, 
where 
$f(t_p) = g(\rho(t_p))(\rho'(t_p))^{\otimes k}$. 
%%
%% 
%%%%%%%%%%%%%%%%%%%%%%%%%%%%%%%%%%%%%%%%%%%%%%%%%%%%%%%%%%%%%%%%%%%%%%
%%%% 
Now let us suppose that we have a section of $\omega^{\otimes k}$ 
 whose representation by a
function does not depend on the choice of local coordinate, i.e., 
$g(\widetilde{t}_p) = f(\widetilde{t}_p)$, and
  $f(t_p) = f(\rho(t_p))(\rho'(t_p))^{\otimes k}$  
 for any change of variable $\rho(t_p)$.  
%%
%%%%%%%%%%%%%%%%%%%%%%%%%%%%%%%%%%%%%%%%%%%%%%%%%%%%%%%%%%%%%%%%%%%%%
%%%%
  We call  
 $f(t_p)(dt_p)^{\otimes k}$ a canonical $k$-differential. 
Let us denote by $\omega_p$ the space of differentials on $D^\times_p$.  
%% 
%%%%%%%%%%%%%%%%%%%%%%%%%%%%%%%%%%%%%%%%%%%%%%%%%%%%%%%%%%%%%%%%%%%%%%%
%%%% 
%%
 Given a linear map   
 $\rho : {\mathcal K}_p \to {\rm End} \left(  \widehat{W}_{t_p}  \right)$, 
such that for any $x \in \widehat{W}_{t_p}$ and large enough $l$, we have  
$\rho(\mathfrak m_p)^l \cdot x = 0$,   
 where $\mathfrak m_p$ is the maximal ideal of $\Oo_p$ at $p$.  
%%
%%%%%%%%%%%%%%%%%%%%%%%%%%%%%%%%%%%%%%%%%%%%%%%%%%%%%%%%%%%%%%%%%%%%%%%%%%%%%%%%%%%%%%%%%%%%%%%
%%%% 
Then, according to \cite{BZF},     
 the vertex operator  
$Y(\rho, t_p) = \sum\limits_{s\in \Z}  
\rho(t^s_p)\; t_p^{-s-1} \; dt_p$,   
is a canonical ${\rm End} \left(  \widehat{W}_{t_p}  \right)$-valued differential on $D^\times_{t_p}$, 
  i.e., it is independent of the choice
of coordinate $t_p$. 
%% 
%%%%%%%%%%%%%%%%%%%%%%%%%%%%%%%%%%%%%%%%%%%%%%%%%%%%%%%%%%%%%%%%%%%%%%%%%%%%%%%%%%%%%%%%%%%%%
%%%%%%%%%%%%%%%%%%%%%%%%%%%%%%%%%%%%%%%%%%%%%%%%%%%%%%%%%%%%%%%%%%%%%%%%%%%%%%%%%%%%%%%%%%%%%
%%%% 
\section{The vertex operator algebra bundle on $M/\F$}  
\label{bundle}
%% 
%%%%%%%%%%%%%%%%%%%%%%%%%%%%%%%%%%%%%%%%%%%%%%%%%%%%%%%%%%%%%%%%%%%%%%%%%%%%%%%%%%%%%%%%%%%%
%%
In this Section we provide the construction 
of $\widehat{W}$-valued vector bundle $\W_{M/\F}$ on $M/\F$.    
%% 
%%
%%%%%%%%%%%%%%%%%%%%%%%%%%%%%%%%%%%%%%%%%%%%%%%%%%%%%%%%%%%%%%%%%%%%%%%%%%%%%%%%%%%%%%%
%%%%%%%%%%%%%%%%%%%%%%%%%%%%%%%%%%%%%%%%%%%%%%%%%%%%%%%%%%%%%%%%%%%%%%%%%%%%%%%%%%%%%%%
%%%%
%% 
\subsection{Torsors and twists under groups of automorphisms}
%% 
%%%%%%%%%%%%%%%%%%%%%%%%%%%%%%%%%%%%%%%%%%%%%%%%%%%%%%%%%%%%%%%%%%%%%%%%%%%%
%%%%%%%%%%%%%%%%%%%%%%%%%%%%%%%%%%%%%%%%%%%%%%%%%%%%%%%%%%%%%%%%%%%%%%%%%%%%
%%%% 
%%
 For an admissible $V$-module $W$ we have the filtration  
$W_{t_{p_j}, \le m} =\bigoplus\limits_{i \ge {\rm Re}(\kappa)}^m W_{t_{p_j}, i}$,    
of $W_{t_{p_j}}$ 
 by finite-dimensional ${\rm Aut}_{p_j}\; \Oo^{(n)}$-submodules, $j \ge 1$.    
%%
%%%%%%%%%%%%%%%%%%%%%%%%%%%%%%%%%%%%%%%%%%%%%%%%%%%%%%%%%%%%%%%%%%%%%%%%%%%%%%%%%%%%%%%%%%%%%% 
%%%%
%% 
Suppose $W$ is an admissible vertex operator algebra $V$-module as in the definition given in 
Section \ref{puzo}. 
%%
%%%%%%%%%%%%%%%%%%%%%%%%%%%%%%%%%%%%%%%%%%%%%%%%%%%%%%%%%%%%%%%%%%%%%%%%%%%%%%%%%%%%
%%%% 
We now explain how to collect elements of the space 
$\widehat{W}$ 
into an intrinsic object 
on a collection of abstract discs on $M/\F$.    
%%
%%%%%%%%%%%%%%%%%%%%%%%%%%%%%%%%%%%%%%%%%%%%%%%%%%%%%%%%%%%%%%%%%%%%%%%%%%%%%%%%%%%%%%%%%%%%%%%%%%%
%%%% 
%%
 We consider a configuration of $l$-points $(p_1, \ldots,  p_l)$ 
on $M/\F$ lying in non-intersecting  
local discs, and we assume that at each point of $(p_1, \ldots, p_l)$  
a coordinate changes independently of changing of coordinates 
on other discs. 
%% 
%%%%%%%%%%%%%%%%%%%%%%%%%%%%%%%%%%%%%%%%%%%%%%%%%%%%%%%%%%%%%%%%%%%%%%%%%%%%%%%%%%%
%%%% 
%%
Therefore, the general element of the group of independent automorphisms of coordinates of 
$l$ points on $M/\F$  
${\rm Aut}_l\; \Oo^{(n)}_{p_1, \ldots, p_l}$  
has the form 
$(t_{ p_1}, \ldots, t_{ p_l} ) \mapsto (\rho_1, \ldots, \rho_l )
 (t_{ p_1}, \ldots, t_{ p_l} )$.     
%%%%
%%

%%%%%%%%%%%%%%%%%%%%%%%%%%%%%%%%%%%%%%%%%%%%%%%%%%%%%%%%%%%%%%%%%%%%%%%%%%%%%%%%%%%%%%%%%
%%%%%%%%%%%%%%%%%%%%%%%%%%%%%%%%%%%%%%%%%%%%%%%%%%%%%%%%%%%%%%%%%%%%%%%%%%%%%%%%%%%%%%%%%
%%%% 
Let us remind the definition of a torsor \cite{BZF}.  
Let $\mathfrak G$ be a group, and $S$ a non-empty set. 
Then $S$ is called a $\mathfrak G$-torsor if it is equipped with a simply transitive 
right action of $\mathfrak G$. 
 For $s_1$, $s_2 \in S$, there exists a unique $\mu \in \mathfrak G$ such that 
$s_1 \cdot \mu = s_2$, 
where the right action is given by 
$s_1 \cdot (\mu \mu') = (s_1 \cdot  \mu) \cdot \mu'$.  
%%
%%%%%%%%%%%%%%%%%%%%%%%%%%%%%%%%%%%%%%%%%%%%%%%%%%%%%%%%%%%%%%%%%%%%%%%%%%%%%%%%%%%%%%%%%%%%%%%%
%%%% 
The choice of any $s_1 \in S$ allows us to identify $S$ with $\mathfrak G$ by sending
 $s_1  \cdot \mu$ to $\mu$. 
Applying the definition of a group twist \cite{BZF} to the group  ${\rm  Aut}_l\; \Oo^{(n)}$ and 
 its module $\widehat{W}$ we obtain following the definition.  
%%
%%%%%%%%%%%%%%%%%%%%%%%%%%%%%%%%%%%%%%%%%%%%%%%%%%%%%%%%%%%%%%%%%%%%%%%%%%%%%%%%%%%%%%%%%%%%%%%%%%%
%%%%%%%%%%%%%%%%%%%%%%%%%%%%%%%%%%%%%%%%%%%%%%%%%%%%%%%%%%%%%%%%%%%%%%%%%%%%%%%%%%%%%%%%%%%%%%%%%%%
%%%%
Given a ${\rm Aut}_l\; \Oo^{(n)}$-module $\widehat{W}_{(z_1, \ldots, z_l)}$
 and a ${\rm Aut}_l\; \Oo^{(n)}$-torsor $\mathcal X$,   
one defines the $\mathcal X$-twist of $\widehat{W}_{(z_1, \ldots, z_l)}$ as the set 
\[
\V_{\mathcal X} = \widehat{W}_{(z_1, \ldots, z_l)} \; {{}_\times \atop      {}^{  {\rm Aut}_l \; \Oo^{(n)}  }     } \mathcal X  
=  \widehat{W}_{(z_1, \ldots, z_l)} \times  \mathcal X/  \left\{ (w, a \cdot \xi) \sim  (aw, \xi) \right\}. 
\]
for $\xi \in \mathcal X$, $a \in {\rm Aut}_l\; \Oo^{(n)}$, and $w\in \widehat{W}_{(z_1, \ldots, z_l)}$.   
%%
%%%%%%%%%%%%%%%%%%%%%%%%%%%%%%%%%%%%%%%%%%%%%%%%%%%%%%%%%%%%%%%%%%%%%%%%%%%%%%%%%%%%%%%%%%%%%%%%%%%
%%%% 
Given $\xi \in \mathcal X$, we may 
 identify $\widehat{W}_{(z_1, \ldots, z_l)}$ with $\V_{\mathcal X}$, 
by $w \mapsto (\xi, w)$.   
This identification depends
on the choice of $\xi$.
 Since ${\rm Aut}_l\; \Oo^{(n)}$ acts on $\widehat{W}_{(z_1, \ldots, z_l)}$ 
by linear operators, the vector space 
structure induced by the above identification does not depend on the choice of $\xi$, 
and $\V_{\mathcal X}$ is canonically a vector space.
%% 
%%%%%%%%%%%%%%%%%%%%%%%%%%%%%%%%%%%%%%%%%%%%%%%%%%%%%%%%%%%%%%%%%%%%%%%%%%%%%%%%%%%%%%%
%%%%
If one thinks of ${\mathcal X}$ as a principal ${\rm Aut}_l\; \Oo^{(n)}$-bundle   
over a set of points, 
then $\V_{\mathcal X}$ is simply the associated vector bundle corresponding to 
$\widehat{W}_{(z_1, \ldots, z_l)}$.   
 Any structure on $\widehat{W}_{(z_1, \ldots, z_l)}$ 
%%% 
(e.g., a bilinear pairing or multiplicative structure) that
is preserved by ${\rm Aut}_l\; \Oo^{(n)}$ will be inherited by $\V_{\mathcal X}$.
%%

%%%%%%%%%%%%%%%%%%%%%%%%%%%%%%%%%%%%%%%%%%%%%%%%%%%%%%%%%%%%%%%%%%%%%%%%%%%%%%%%%%%%%%%%%%%%%%
%%%%%%%%%%%%%%%%%%%%%%%%%%%%%%%%%%%%%%%%%%%%%%%%%%%%%%%%%%%%%%%%%%%%%%%%%%%%%%%%%%%%%%%%%%%%%%
%%%%
%% 
Now we wish to attach to any disc a certain twist $\V_{(t_{p_1}, \ldots, t_{p_l} )}$ 
of $\widehat{W}_{(t_{p_1}, \ldots, t_{p_l} )}$, 
so that $\widehat{W}_{(t_{p_1}, \ldots, t_{p_l} )}$
 is attached to the standard discs, 
and for any set of coordinates $(t_{p_1}, \ldots, t_{p_l} )$ on $(D_{p_1}, \ldots, D_{p_l})$ 
we have an isomorphism
\begin{equation}
\label{isomo}
i_{(t_{p_1}, p_1; \ldots; t_{p_l}, p_l)} : 
\widehat{W}_{(t_{p_1},  \ldots, t_{p_1})} \; \widetilde{\to} \;  \V_{(t_{p_1}, \ldots, t_{p_l} )}.
\end{equation}
%%
%%%%%%%%%%%%%%%%%%%%%%%%%%%%%%%%%%%%%%%%%%%%%%%%%%%%%%%%%%%%%%%%%%%%%%%%%%%%%%%%%%%%
%%%% 
%%
We then associate 
sections of some bundles on $(D^\times_{t_{p_1}}, \ldots,  D^\times_{t_{p_l}})$  
to elements of $\widehat{W}_{(t_{p_1}, \ldots, t_{p_l} )}$. 
The system of isomorphisms $i_{(t_{p_1}, p_1; \ldots; t_{p_l}, p_l)}$ 
should satisfy certain compatibility condition. 
%%
%%%%%%%%%%%%%%%%%%%%%%%%%%%%%%%%%%%%%%%%%%%%%%%%%%%%%%%%%%%%%%%%%%%%%%%%%%%%%%%%%%%%%%
%%%%
 Namely, if $(t_{p_1}, \ldots, t_{p_l} )$ and  
$(\widetilde{t}_{p_1}, \ldots, \widetilde{t}_{p_l})$ 
are two sets of coordinates on $(D_{p_1}$, $\ldots$, $D_{p_l})$ 
such that $(\widetilde{t}_{p_1}; \ldots; \widetilde{t}_{p_l}) 
= (\rho_1, \ldots, \rho_l )(t_{p_1}, \ldots, t_{p_l})$, then we obtain 
an automorphism 
$(i^{-1}_{   (\widetilde{t}_{p_1}, p_1, \ldots, \widetilde{t}_{p_l}, p_l)  }
\circ i_{  (t_{p_1}, p_1; \ldots; t_{p_1}, p_1)  }$  
 of $\widehat{W}_{(t_{p_1}, \ldots, t_{p_l} )}$.  
%%
%%%%%%%%%%%%%%%%%%%%%%%%%%%%%%%%%%%%%%%%%%%%%%%%%%%%%%%%%%%%%%%%%%%%%%%%%%%%%%%%%%%%%%
%%%%
 The condition is that the assignment
$(\rho_1, \ldots, \rho_l) (z_1, \ldots, z_l) 
\mapsto  i^{-1}_{(\widetilde{t}_{p_1}, p_1; \ldots; \widetilde{t}_{p_l}, p_l)}
 \circ i_{(t_{p_1}, p_1; \ldots, t_{p_l}, p_l)}$,  
defines a representation of the group ${\rm Aut}_l\; \Oo^{(n)}$
of independent  
changes of coordinates
 on $\widehat{W}_{(t_{p_1}, \ldots, t_{p_l} )}$. 
%%
%%%%%%%%%%%%%%%%%%%%%%%%%%%%%%%%%%%%%%%%%%%%%%%%%%%%%%%%%%%%%%%%%%%%%%%%%%%%%%%%%%%%%%%%%
%%%% 
 If this condition is satisfied, 
then $\V_{(t_{p_1}, \ldots, t_{p_l} )}$
 is canonically identified with the twist of $\widehat{W}_{(t_{p_1}, \ldots, t_{p_l} )}$ 
 by the 
 ${\rm Aut}_l \; \Oo^{(n)}$-torsor of formal coordinates at $(p_1, \ldots, p_l)$.   
%% 

%%%%%%%%%%%%%%%%%%%%%%%%%%%%%%%%%%%%%%%%%%%%%%%%%%%%%%%%%%%%%%%%%%%%%%%%%%%%%%%%%%%%%%%%%%%%%%%%
%%%%%%%%%%%%%%%%%%%%%%%%%%%%%%%%%%%%%%%%%%%%%%%%%%%%%%%%%%%%%%%%%%%%%%%%%%%%%%%%%%%%%%%%%%%%%%%%
%%%%
In the next Subsection we will show that given 
 the space $\widehat{W}$      
one can attach to it 
 a vector bundle $\W_{M/\F}$ on the space of leaves $M/\F$ for a foliation $\F$ defined on 
any smooth complex manifold $M$. 
%%
%%%%%%%%%%%%%%%%%%%%%%%%%%%%%%%%%%%%%%%%%%%%%%%%%%%%%%%%%%%%%%%%%%%%%%%%%%%%%%%%%%%%%%%%%%%%%%%%%
%%%%  
 I.e., the elements of $\widehat{W}$     
give rise to a collection of coordinate-independent 
sections $X(p_1, \ldots, p_l)$ of the bundle $\W^*_{M/\F}$ in the neighborhoods of 
a collection of points $(p_1, \ldots, p_l) \in M/\F$.    
%%
%%%%%%%%%%%%%%%%%%%%%%%%%%%%%%%%%%%%%%%%%%%%%%%%%%%%%%%%%%%%%%%%%%%%%%%%%%%%%%%%%%%%%%%%%%%%%%%%%%%%%%%%%%%%%%%%%%%
%%%%%%%%%%%%%%%%%%%%%%%%%%%%%%%%%%%%%%%%%%%%%%%%%%%%%%%%%%%%%%%%%%%%%%%%%%%%%%%%%%%%%
%%%% 
%% 
The construction is based on the principal bundle for the group 
${\rm Aut}_l\; \Oo^{(n)}$,  
 which naturally exists on an arbitrary smooth curve and on any collection 
$(D_{p_1}, \ldots, D_{p_l})$ of non-intersecting discs.   
%%%%
%%%%%%%%%%%%%%%%%%%%%%%%%%%%%%%%%%%%%%%%%%%%%%%%%%%%%%%%%%%%%%%%%%%%%%%%%%%%%%%%%%%%%%%%%%%%%%%%%%%%%%%%%%%%%
%%%% 
We denote by ${\it Aut}_{(p_1, \ldots, p_l) }$  the set of all 
coordinates $(t_{p_1}, \ldots, t_{p_l})$ on discs $(D_{p_1}, \ldots, D_{p_l})$,   
centered at points $(p_1, \ldots, p_l)$.   
It comes equipped with a natural right action of the group of automorphisms 
 ${\rm Aut}_l \;  \Oo^{(n)}$.     
 If $t_{p_i} \in {\it Aut}_{p_i}$, $1 \le i \le l$, 
and $\rho(z_i) \in {\rm Aut}_i\; \Oo^{(n)}$, then $\rho_i(  t_{p_i} ) \in {\it Aut}_{p_i}$.  
%%%%
%%
 Furthermore, as it was shown in \cite{BZF} that 
$(\rho_i * \mu_i)( t_{p_i} ) = \mu_i(  \rho_i(t_{p_i})  )$,   
for $1 \le i \le l$, 
it  defines a right simply transitive action of ${\rm Aut}_i\; \Oo^{(n)}$   
 on ${\it Aut}_{p_i}$.  
%%  
%%%%%%%%%%%%%%%%%%%%%%%%%%%%%%%%%%%%%%%%%%%%%%%%%%%%%%%%%%%%%%%%%%%%%%%%%%%%%%%%
%%
Thus we see that 
the group ${\rm Aut}_l\; \Oo^{(n)}$ 
 acts naturally on ${\it Aut}_{(p_1, \ldots, p_l)}$,    
 and is a ${\rm Aut}_l\; \Oo^{(n)}$-torsor.   
Thus, we can define the following twist. 
%%%%%%%%%%%%%%%%%%%%%%%%%%%%%%%%%%%%%%%%%%%%%%%%%%%%%%%%%%%%%%%%%%%%%%%%%%%%%%%%%%%%%%%%%%%%%%%%%
%%%% 
We can introduce the  ${\rm Aut}_l \;  \Oo^{(n)}$-twist of $\overline{W}_{(p_1, \ldots, p_l)}$    
$\V_{(p_1, \ldots, p_l)}= \overline{W}_{(p_1, \ldots, p_l)}   \;  
{  {}_\times \atop      {}^{   {\rm Aut}_l\;  \Oo^{(n)}   }} 
  \;{\it Aut}_{(p_1, \ldots, p_l)}$.    
 The original definition was given in \cite{BD, Wi}.  
%%%%
%%%%%%%%%%%%%%%%%%%%%%%%%%%%%%%%%%%%%%%%%%%%%%%%%%%%%%%%%%%%%%%%%%%%%%%%%%%%%%%%%%%%%%%%%%%%%%%%
%%
For each set of formal coordinates 
$(t_{p_1}, \ldots,  t_{p_l})$ at points $(p_1, \ldots, p_l)$,  
 $(w_1, \ldots, w_l) \in \widehat{W}_{(p_1, \ldots, p_l)}$,   
 any element of the twist $\V_{(p_1, \ldots, p_s)}$     
 may be written uniquely as a
pair 
 $\left((w_1, \ldots, w_l) \right.$, $\left.(t_{p_1}, \ldots, t_{p_l})\right)$.    
%%
%%%%
%%%%%%%%%%%%%%%%%%%%%%%%%%%%%%%%%%%%%%%%%%%%%%%%%%%%%%%%%%%%%%%%%%%%%%%%%%%%%%%%%%%%%%%%%%%%%%%%%%%%%%
%%%%%%%%%%%%%%%%%%%%%%%%%%%%%%%%%%%%%%%%%%%%%%%%%%%%%%%%%%%%%%%%%%%%%%%%%%%%%%%%%%%%%%%%%%%%%%%%%%%%%
%%%% 
\subsection{Definition of $\W_{M/\F}$-bundle of $\widehat{W}$-elements} 
Now let us formulate the definition of fiber bundle associated  
through vectors of elements 
${\bf X} \in \widehat{W}$ defined on any set of standard discs
 $U= (D_{ t_{p_1} }, \ldots, D_{ t_{p_l} })$ 
 around points $( p_1, \ldots, p_l)$  
 on $M/\F$  
 with local coordinates $(t_{p_1}, \ldots, t_{p_l})$.  
%%%%
%%%%%%%%%%%%%%%%%%%%%%%%%%%%%%%%%%%%%%%%%%%%%%%%%%%%%%%%%%%%%%%%%%%%%%
%%%%
We construct an analog of a principal  
${\rm Aut}_l\; \Oo^{(n)}$-bundle for $M/\F$. 
%%
%%%%%%%%%%%%%%%%%%%%%%%%%%%%%%%%%%%%%%%%%%%%%%%%%%%%%%%%%%%%%%%%%%%%%%%%%%%%%%%%%%%%%%%%%%%%%%%%%%%%%%%%%%%%
%%%%
The fiber space is  
provided by vectors ${\bf X}$ of elements 
$X(t_{p_1}, \ldots, t_{p_l})$,   
 given by  
  a fiber bundle $\W_{M/\F}|_{(D_{ t_{p_1}}, \ldots,  D_{ t_{p_l}}  )}$ 
defined by trivializations 
$i_{ (t_{ p_1 }, \ldots,  t_{ p_l }  )}: {\bf X} (p_1, \ldots, p_l)= 
 \left[   X(p_1,  \ldots, p_l)   \right] \to (D_{t_{ p_1 } }, \ldots, D_{t_{ p_l } })$,   
%%
%%
%%%%%%%%%%%%%%%%%%%%%%%%%%%%%%%%%%%%%%%%%%%%%%%%%%%%%%%%%%%%%%%%%%%%%%%%%%%%%%% 
%%
 with a continuous ${\bf X}(p_1, \ldots, p_l)$-preserving 
 right action 
 ${\bf X}(p_1, \ldots, p_l) \times {\rm Aut}_l \; \Oo^{(n)} \to {\bf X}(p_1, \ldots, p_l)$.   
Namely, for two sections $\zeta$,  $\zeta.a$ of $\W_{M/\F}|_{(D_{ t_{p_1} }, \ldots, D_{ t_{p_l} } )}$,    
the map $a \mapsto \zeta.a$ is a homeomorphism for all $a \in {\rm Aut}_l \;  \Oo^{(n)}$.  
%%%% 
%%%%%%%%%%%%%%%%%%%%%%%%%%%%%%%%%%%%%%%%%%%%%%%%%%%%%%%%%%%%%%%%%%%%%%%%%%%%%%%%%%%%%%%%%%%%%%%%%%%%%
%%%%  
Then, according to the definition of a torsor,  
the fiber of such bundle at points $(p_1, \ldots, p_l)$ is the   
${\rm Aut}_l \;  \Oo^{(n)}$-torsor ${\it Aut}_{(p_1, \ldots, p_l)}$.     
%%
%% 

%%%%%%%%%%%%%%%%%%%%%%%%%%%%%%%%%%%%%%%%%%%%%%%%%%%%%%%%%%%%%%%%%%%%%%%%%%%%%%%%%
%%%% 
%%
Denote by ${\it Aut}_l$ the set of $l$-tuples of local coordinates ${\it Aut}_{(p_1, \ldots, p_l)}$ 
all over leaves of $\F$.   
%%
%%%%%%%%%%%%%%%%%%%%%%%%%%%%%%%%%%%%%%%%%%%%%%%%%%%%%%%%%%%%%%%%%%%%%%%%%%%%%%%%%%%%%%%%
%%%%
%%
Given a finite-dimensional ${\rm Aut}_l \; \Oo^{(n)}$-module $\widehat{W}_{i, (p_1, \ldots, p_l)}$,   
 let 
$\W_{M/\F}|_{(D_{ t_{p_1} } , \ldots, D_{ t_{p_l} })}= \widehat{W}_{i, (t_{p_1 }, \ldots,  t_{p_l } )}   
{\times  \atop \; {\rm Aut}_l \;  \Oo^{(n)} } \; {\it Aut}_l$,     
be the fiber bundle associated to $\widehat{W}_{ i, (t_{p_1 }, \ldots,  t_{p_l } )}$ 
 and ${\it Aut}_l$.  
%%
%%
%%%%%%%%%%%%%%%%%%%%%%%%%%%%%%%%%%%%%%%%%%%%%%%%%%%%%%%%%%%%%%%%%%%%%%%%%%%%%%%%%%%%%%%%%%%%%%%%%%%%
%%%%
%%
Then, $\W_{M/\F}|_{ (D_{ t_{p_1} }, \ldots, D_{ t_{p_1} } )  }$ is a finite-rank bundle over   
$M/\F|_{ (D_{ t_{p_1} }, \ldots, D_{ t_{p_1} } )}$ whose fiber at a collection of points  
$(p_1, \ldots, p_l) \in M/\F$ is given by the vector $[X(p_1, \ldots, p_l)]$.   
%% 
%%%%%%%%%%%%%%%%%%%%%%%%%%%%%%%%%%%%%%%%%%%%%%%%%%%%%%%%%%%%%%%%%%%%%%%%%%%%%%%%%%%%%%
%%%%%%%%%%%%%%%%%%%%%%%%%%%%%%%%%%%%%%%%%%%%%%%%%%%%%%%%%%%%%%%%%%%%%%%%%%%%%%%%%%%%%%%%%% 
%%%%
In a vicinity of every point of $(p_1, \ldots, p_l)$ on $M/\F$ we can choose discs  
 $(D_{p_1}, \ldots, D_{p_l})$  such that the bundle $\W_{M/\F}$ over $(D_{p_1}, \ldots, D_{p_l})$ is 
$(D_{p_1}, \ldots, D_{p_l}) \times {\bf X}(p_1, \ldots, p_l)$,  
where ${\bf X}(p_1, \ldots, p_l)$ is a section of $\W_{M/\F}$. 
%%
%%%%%%%%%%%%%%%%%%%%%%%%%%%%%%%%%%%%%%%%%%%%%%%%%%%%%%%%%%%%%%%%%%%%%%%%%%%%%%%%%%%%%%%%
%%%%%%%%%%%%%%%%%%%%%%%%%%%%%%%%%%%%%%%%%%%%%%%%%%%%%%%%%%%%%%%%%%%%%%%%%%%%%%%%%%%%%%
%%%%  
The fiber bundle $\W_{M/\F}$  
with fiber $\left[X(p_1, \ldots, p_l) \right]$ is a map 
$\W_{M/\F}: \widehat{W} \rightarrow M/\F$  where $M/\F$ is $\W_{M/\F}$-bundle base space.  
%%
%%%%%%%%%%%%%%%%%%%%%%%%%%%%%%%%%%%%%%%%%%%%%%%%%%%%%%%%%%%%%%%%%%%%%%%%%%%%%%%%%%%%%%%%%%%%
%%%%%%%%%%%%%%%%%%%%%%%%%%%%%%%%%%%%%%%%%%%%%%%%%%%%%%%%%%%%%%%%%%%%%%%%%%%%%%%%%%%%%%%%%%%%%%
%%%% 
%%
 For every set of points $(p_1, \ldots, p_l) \in M/\F$ with local discs  
$(D_{ t_{p_1} }, \ldots, D_{ t_{p_l} } )$   
 $i_{(t_{ p_1}, \ldots, t_{ p_l}  )}^{-1}$ is homeomorphic to 
$(D_{ t_{p_1} }, \ldots, D_{ t_{p_l} } )\times \widehat{W}$. 
%%
%%  
%%%%%%%%%%%%%%%%%%%%%%%%%%%%%%%%%%%%%%%%%%%%%%%%%%%%%%%%%%%%%%%%%%%%%%%%%%%%%%%%%%%%%5
%%%%
Namely, we have for 
$\left[ X(p_1, \ldots, p_l) \right]:
 i_{ (t_{ p_1}, \ldots, t_{ p_l}  ) }^{-1} \rightarrow  (D_{ t_{p_1} }, \ldots, D_{ t_{p_1} } )
  \times \widehat{W}_{t_{p_1}, \ldots, t_{p_l}}$,  
that 
%% 
%%%%%%%%%%%%%%%%%%%%%%%%%%%%%%%%%%%%%%%%%%%%%%%%%%%%%%%%%%%%%%%%%%%%%%%%%%%%%%%%%%%%%%%%%%%%%%%%%%%%%
%%%% 
%% 
$\mathcal P $ $\circ$ $\left[  X (p_1, \ldots, p_l )    \right]$      
= $i_{   (t_{ p_1}, \ldots, t_{ p_l}  ) }$    
$|_{   i^{-1}_{  (t_{ p_1}, \ldots, t_{ p_l}  )  }    } $
  $ (D_{ t_{p_1} }, \ldots, D_{ t_{p_l} } )$,  
where ${\mathcal P}$ is the projection map on $(D_{ t_{p_1} }, \ldots, D_{ t_{p_l} } )$.    
%%
%% 
%%%%%%%%%%%%%%%%%%%%%%%%%%%%%%%%%%%%%%%%%%%%%%%%%%%%%%%%%%%%%%%%%%%%%%%%%%%%%%%%%%%%%%%%%%%%%%%%%%%
%%%%
For an ${\rm Aut}_l\; \Oo^{(n)}$-module $\widehat{W}_{ (t_{ p_1}, \ldots, t_{ p_l}  ) }$  
which has a filtration by finite-dimensional submodules  
$\widehat{W}_{ s, (t_{ p_1}, \ldots, t_{ p_l}  )  }$, $s \ge 0$, we consider the directed 
inductive limit $\W_{M/\F}$ of a system of finite rank bundles $\W_{s, M/\F}$    
 on $M/\F$ defined by embeddings  
 $\W_{s, M/\F} \rightarrow 
\W_{s', M/\F}$, 
for 
 $s \le s'$, i.e., 
  $\W_{M/\F}$ it as a fiber bundle of infinite
rank over $M/\F$.   
%%
%%%%%%%%%%%%%%%%%%%%%%%%%%%%%%%%%%%%%%%%%%%%%%%%%%%%%%%%%%%%%%%%%%%%%%%%%%%%%%%%%%%%%%%%%%%%%%%%%%%
%%%%%%%%%%%%%%%%%%%%%%%%%%%%%%%%%%%%%%%%%%%%%%%%%%%%%%%%%%%%%%%%%%%%%%%%%%%%%%%%%%%%%%%%%%%%%%%%%%%
%%%%
%% 
\subsection{Explicit construction of canonical intrinsic setup for $\W_M$}
%%
%%%%%%%%%%%%%%%%%%%%%%%%%%%%%%%%%%%%%%%%%%%%%%%%%%%%%%%%%%%%%%%%%%%%%%%%%%%%%%%%%%%%%%%%%%%%%%%%%%%
%%%%
Let $W$ be a quasi-conformal vertex operator algebra $V$-module $W$ defined in Section \ref{puzo}. 
In order to be able to introduce a section $X(p_1, \ldots,  p_l )$ 
of the vertex operator algebra bundle $\W_{M/\F}$ 
 defined on abstract discs $(D^\times_{ p_1}, \ldots, D^\times_{ p_l} )$  
in the coordinate independent 
description, we associate  
$X(p_1, \ldots,  p_l )$ to coordinate independent vector ${\bf X}(v_1, z_1; \ldots; v_l, z_l)$. 
%%

%%%%%%%%%%%%%%%%%%%%%%%%%%%%%%%%%%%%%%%%%%%%%%%%%%%%%%%%%%%%%%%
%%%%%%%%%%%%%%%%%%%%%%%%%%%%%%%%%%%%%%%%%%%%%%%%%%%%%%%%%%%%%%%%
%%%%
Now let us give the following definition. 
%%
%%%%%%%%%%%%%%%%%%%%%%%%%%%%%%%%%%%%%%%%%%%%%%%%%%%%%%%%%%%%%%%%%
%%
For each set of points $(p_1, \ldots, p_l)$ and elements  
  $(w_1, \ldots, w_l) \in \overline{W}_{z_1, \ldots, z_l}^{\otimes l}$, 
 we define an intrinsic $\widehat{W}$-valued meromorphic    
section $X(p_1, \ldots, p_l)$  
on the punctured discs 
$(D^\times_{ p_1}, \ldots, D^\times_{ p_l} )$ by  
an operation
%%
%%%%%%%%%%%%%%%%%%%%%%%%%%%%%%%%%%%%%%%%%%%%%%%%%%%%%%%%%%%%%%%%%%%%%%%%%%%%%%%%%%%%%%%%%%%%%%
%%%%
$(w_1, \ldots, w_l)$, $(p_1, \ldots, p_l)  \mapsto
 X(p_1, \ldots, p_l )$,     
%%
%%
%%%%%%%%%%%%%%%%%%%%%%%%%%%%%%%%%%%%%%%%%%%%%%%%%%%%%%%%%%%%%%%%%%%%%%%%%%%%%%%%%%%%%%%%%%%%%%%%%%%%%%%%%%%%%%%
%%%%
assigning to a vector $X(p_1, \ldots, p_l )$ of $\W_{(D^\times_{ p_1}, \ldots, D^\times_{ p_l} ) }$  
an element 
of ${\bm{ \mathcal K}}_{(p_1, \ldots, p_l)}$ 
 (i.e., rational $\widehat{W}$-valued functions on $(D^\times_{ p_1}, \ldots, D^\times_{ p_l} ) $), 
 defined by the $\W^*_{ (D^\times_{ p_1}, \ldots, D^\times_{ p_l} ) }$-fiber 
 ${\bf X}_{ i_{  (t_{ p_1}, \ldots, t_{ p_l}  )  }  } \in \widehat{W}_{(p_1, \ldots, p_l)}$. 
%%
%%
%%%%%%%%%%%%%%%%%%%%%%%%%%%%%%%%%%%%%%%%%%%%%%%%%%%%%%%%%%%%%%%%%%%%%%%%%%%%%%%%%%%%%%%%%%%%%%%
%%%%%%%%%%%%%%%%%%%%%%%%%%%%%%%%%%%%%%%%%%%%%%%%%%%%%%%%%%%%%%%%%%%%%%%%%%%%%%%%%%%%%%%%%%%%%%%
%%%% 
Consider the operator 
${\rm R} (\rho_1, \ldots, \rho_s)=  
\left[ \widehat \partial_{J} \rho_{i(I)}\right]
=
\left[
\widehat \partial_{J} \rho_{i_1(I)}, 
\widehat \partial_{J } \rho_{i_2(I)}, 
%%
%\\
\cdots,   
\\
\widehat \partial_{J} \rho_{i_s(I)}    
\right]^T$.  
%%
%%%%%%%%%%%%%%%%%%%%%%%%%%%%%%%%%%%%%%%%%%%%%%%%%%%%%%%%%%%%%%%%%%%%%%%%%%%%%%%%%%%%%%%%%%%%%%%%
%%%% 
The index operator $J$ takes the value of index $z_j$ of arguments in the vector \eqref{norma},  
while the index operator $I$ takes values of index of differentials $dz_i$ in each entry of 
the vector ${\bf X}$ \eqref{overphi0}. 
 Thus, the index operator 
$i(I)=(i_{I}, \ldots, i_s(I))$      
is given by consequent cycling permutations of $I$. 
%%
%%%%%%%%%%%%%%%%%%%%%%%%%%%%%%%%%%%%%%%%%%%%%%%%%%%%%%%%%%%%%%%%%%%%%%%%%%%%%%%%%%%%%%%%%%%%%%%%%%%%%%%
%%%% 
 We define the operator 
$\widehat\partial_{J } \rho_a =   
\exp(   - \sum_{ {\bf r}_n, \; 
\sum\limits_{i=1}^n r_i  \ge 1 }  
r_J\; \beta^{(a)}_{{ \bf r}_s }\; \zeta^{r_1}_1 \;  \ldots \; 
 \zeta^{r_J}_J \ldots \zeta^{r_s}_s \; \partial_{z_J})$,    
which contains index operators $J$ as index of a  
dummy variable $\zeta_J$ turning into $z_j$, $j=1, \ldots, s$.   
In the last formula $\widehat\partial_{J }$ acts on each argument of maps $X$ in the vector ${\bf X}$.    
%%
%%%%%%%%%%%%%%%%%%%%%%%%%%%%%%%%%%%%%%%%%%%%%%%%%%%%%%%%%%%%%%%%%%%%%%%%%%%%%%%%%%%%%%%%%%%%%%%%%%%
%%%% 
In \cite{BZF} it was shown that the mappings   
$(\rho_1, \ldots, \rho_l)(z_1, \ldots,  z_j ) \mapsto R 
\left( \rho_1, \ldots, \rho_l \right)$,   
for $1 \le j \le l$, 
 define a representation of  ${\rm Aut}_l \; \Oo^{(n)}$
on $\widehat{W}_{(z_1, \ldots, z_l)}$ by  
${\rm R} 
\left(\rho \circ \widetilde{\rho}\right) = {\rm R} 
\left(\rho\right) \; {\rm R} 
\left(\widetilde{\rho}\right)$,  
for $\rho$, $\widetilde{\rho} \in {\rm Aut}_l \; \Oo^{(n)}$.   
%% 
%% 
%%%%%%%%%%%%%%%%%%%%%%%%%%%%%%%%%%%%%%%%%%%%%%%%%%%%%%%%%%%%%%%%%%%%%%%%%%%%%%%%%%%%%%%%%%%%%%%%%%%%
%%%%
Then we see that 
 for generic elements   
${\bf X} \left(v_1, z_1; \ldots; v_s, z_s \right) \in \widehat{W}_U$,   
for an admissible vertex operator algebra $V$-module $W$,   
 ${\bf X}(v_1, z_1; \ldots; v_s, z_s)$    
   are independent on  
 changes 
$(z_1, \ldots, z_{s +s'}) \mapsto (\widetilde{z}_1, \ldots,  \widetilde{z}_{s+s'})
= \left((\rho_1, \ldots, \rho_{s+s'})( z_1, \ldots,  z_{s+s'} )\right)$,   
for  $1 \le i \le s+s'$,  
 of local coordinates of  $(z_1, \ldots, z_s)$ and $(\widetilde{z}_1, \ldots, \widetilde{z}_{s'})$,  
 at points $(p_1, \ldots, p_s)$ and $(\widetilde{p}_1, \ldots, \widetilde{p}_{s'})$. 
%%
%%

%%
%%%%%%%%%%%%%%%%%%%%%%%%%%%%%%%%%%%%%%%%%%%%%%%%%%%%%%%%%%%%%%%%%%%%%%%%%%%%%%%%%%%%%%%%%%%%%%%%
%%%%
Indeed, consider the vector 
${\bf X}(v_1, \widehat{z}_1; \ldots; v_s, \widehat{z}_s )$ =  
$ \left[   
X \left(  v_1, \widehat{z}_1 \; d \widehat {z}_{ {\it i}(1)}; \ldots; 
v_s, \widehat{z}_s \; d \widehat {z}_{ {\it i}(s)} \right)  \right]$.     
%% 
%%
%%%%%%%%%%%%%%%%%%%%%%%%%%%%%%%%%%%%%%%%%%%%%%%%%%%%%%%%%%%%%%%%%%%%%%%%%%%%%%%%%%%%%%%%%%
%%%%
Note that 
 $d\widehat{z}_j = \sum\limits_{i=1}^n dz_i \;  
{\partial_{z_i} \rho_j}$,
$\partial_{z_i} \rho_j = \frac
{\partial \rho_j} {\partial z_i } $.    
%%
%%%%%%%%%%%%%%%%%%%%%%%%%%%%%%%%%%%%%%%%%%%%%%%%%%%%%%%%%%%%%%%%%%%%%%%%%%%%%%%%%%%%%%%%%%%%%%%%%%%
%%%%
%%
By the definition of the action of ${\rm Aut}_s\; \Oo^{(n)}$,  
when rewriting $d\widehat{z}_i$,
we have  
%%
%%%%%%%%%%%%%%%%%%%%%%%%%%%%%%%%%%%%%%%%%%%%%%%%%%%%%%%%%%%%%%%%%%%%%%%%%%%%%%%%%%%%%%%%%%%%%%%%%%%
%%%%
%%  
\begin{eqnarray*}
{\bf X} (v_1, \widehat{z}_1 \; d \widehat{z}_1; \ldots;  
  g_s, \widehat{z}_s \; d \widehat{z}_s)     
&=&  {\rm R}( \rho_1, \ldots, \rho_s) \;  
 \left[  X \left( v_1, z_1 \; d \widehat{z}_{i(1)}; \ldots; v_s, z_s \; d \widehat{z}_{i(s)} \right) 
\right] 
\\
&=&  {\rm R}(\rho_1, \ldots, \rho_s) \;  
\left[  X \left( v_i, z_i \;  
\sum\limits_{j=1}^s \partial_j \rho_{i(s)} \; dz_j  \right) 
\right]. 
\end{eqnarray*}
%%
%%%%%%%%%%%%%%%%%%%%%%%%%%%%%%%%%%%%%%%%%%%%%%%%%%%%%%%%%%%%%%%%%%%%%%%%%%%%%%%%%%%%%%%%%%%%%%%%%%%%%%
%%%%
 By linearity of the mapping $X$, 
 we obtain from the last equation  
%%
%%%%%%%%%%%%%%%%%%%%%%%%%%%%%%%%%%%%%%%%%%%%%%%%%%%%%%%%%%%%%%%%%%%%%%%%%%%%%%%%%%%%%%%%%%%%%%%%%%%%%%
%%%%
\begin{equation}
\label{norma}
{\bf X}(\widehat{v}_1, \widehat{z}_1; \ldots; \widehat{v}_s, \widehat{z}_s)  
=
 {\bf X}(v_1, \widehat{z}_1 \; d\widehat{z}_1; \ldots; v_2, \widehat{z}_s \; d\widehat{z}_s)  
=
 \left[
 X \left( v_1, z_1 \; dz_{i(1)}; \ldots;   v_s, z_s \; dz_{i(s)} \right)   
\right], 
\end{equation}
%% 
%%%%%%%%%%%%%%%%%%%%%%%%%%%%%%%%%%%%%%%%%%%%%%%%%%%%%%%%%%%%%%%%%%%%%%%%%%%%%%%%%%%%%%%%%%%%%%%%%%%%
%%%%
Due to properties of a vertex operator algebra $V$ admissible module $W$,   
the action of operators $R\left(\rho_1, \ldots, \rho_s\right)$  
 on $(v_1, \ldots,  v_s) \in V^{\otimes s}$ results in a sum of finitely many terms.
%%
%%%%%%%%%%%%%%%%%%%%%%%%%%%%%%%%%%%%%%%%%%%%%%%%%%%%%%%%%%%%%
%%%%
 We then conclude that  
 the vector ${\bf X}$ is invariant, 
i.e., 
\begin{eqnarray*}
 {\bf X} (\widehat{v}_1, \widehat{z}_1; \ldots; \widehat{v}_s, \widehat{z}_s) 
&=& {\bf X} \left(v_1, \widehat{z}_1\; d\widehat{z}_1; \ldots;  v_s, \widehat{z}_s\; d\widehat{z}_s\right)  
\\
&=&  {\bf X}\left( v_1, z_1 \;dz_1; \ldots;  v_s, z_s \;dz_s \right)
= {\bf X} (v_1, z_1; \ldots; v_s, z_s).      
\end{eqnarray*}
%%
%%%%%%%%%%%%%%%%%%%%%%%%%%%%%%%%%%%%%%%%%%%%%%%%%%%%%%%%%%%%%%%%%%%%%%%%%%%%%%%%%%%%%%%%%%%%%%%%%%%
%%%%%%%%%%%%%%%%%%%%%%%%%%%%%%%%%%%%%%%%%%%%%%%%%%%%%%%%%%%%%%%%%%%%%%%%%%%%%%%%%%%%%%%%%%%%%%%%%%%
%%%%
The insertions of $k$ vertex operators $\left(v_k, t_{ p_k} \; dt_{p_k} \right)$, $k \ge 0$,   
 which are present in the definition of sections of a vertex operator algebra bundle $\W_{M/\F}$,
keep elements ${\bf X}$ invariant with respect to coordinate changes.    
Thus, the construction of spaces $\widehat{W}$ are invariant under the action of the group 
${\rm Aut}_s \; \Oo^{(n)}$.  
%%
%%

%%%%%%%%%%%%%%%%%%%%%%%%%%%%%%%%%%%%%%%%%%%%%%%%%%%%%%%%%%%%%%%%%%%%%%%%%%%%%%%%%%%%%
%%%%%%%%%%%%%%%%%%%%%%%%%%%%%%%%%%%%%%%%%%%%%%%%%%%%%%%%%%%%%%%%%%%%%%%%%%%%%%%%%%%%%
%%%%
 We now formulate the following Lemma used later for the main result  
 Lemma \ref{groupo} of   
this paper for the category of vertex operator algebra bundles $\W_{M/\F}$ on $M/\F$.  
%%
%%
%%%%%%%%%%%%%%%%%%%%%%%%%%%%%%%%%%%%%%%%%%%%%%%%%%%%%%%%%%%%%%%%%%%%%%%%%%%%%%%%%%%%
%%%%%%%%%%%%%%%%%%%%%%%%%%%%%%%%%%%%%%%%%%%%%%%%%%%%%%%%%%%%%%%%%%%%%%%%%%%%%%%%%%%%
%%%%
\begin{lemma}
%%%%
\label{mainpro}
A $\widehat{W}$-valued, independent  
of the choice of coordinates 
$(t_{i, p_j})$, $1 \le i \le n$, $1 \le j \le l$,  
on a set of non-intersecting discs $( D^\times_{i, p_j} )$,    
  section $X(p_1, \ldots, p_n)$    
of the bundle $\W^*_{M/\F}|_{ (D^\times_{ i, p_j }) }$ on   
 the $\overline{W}_{(p_1, \ldots,  p_l)}$-valued fibers   
$X_{ i_{ (t_{p_1}, \ldots, t_{p_l}   ) } }$  defined by \eqref{isomo} on  
$( D^\times_{i, p_j} )$ dual to $\W_{M/\F}|_{  (D^\times_{i, p_j}) }$ is given by the formula 
%%%%
%%%%%%%%%%%%%%%%%%%%%%%%%%%%%%%%%%%%%%%%%%%%%%%%%%%%%%%%%%%%%%%%%%%%%%%%%%%%%%%%%%%%%%%%%%%%
%%%%
%% 
\begin{equation}
\label{mainfo}
%%
%%%%%%%%%%%%%%%%%%%%%%%%%%%%%%%%%%%%%%%%%%%%%%%%%%%%%%%%%%%%%%%%%%%%%%%%%%%%%%%%
%%%%
X(p_1, \ldots, p_l)=   
\left[  X_{ i_{ ( t_{p_1}, \ldots,  t_{p_l}  ) }}(w_1, \ldots, w_l)  \right]   
= \left[ X(v_1, z_1; \ldots; v_l, z_l) \right]= {\bf X}(v_1, z_1; \ldots; v_l, z_l), 
\end{equation}
%%%%%%%%%%%%%%%%%%%%%%%%%%%%%%%%%%%%%%%%%%%%%%%%%%%%%%%%%%%%%%%%%%%%%%%%%%%%%%%%%%%%%%%%%%%%%%%%%%%%%%%%
%%%%
%%
$[X(v_1, z_1; \ldots; v_l, z_l)] \in \widehat{W}_{ ( t_{p_1}, \ldots,  t_{p_l}  )}$,   
 where $( t_{p_1}, \ldots, t_{p_l} )$ are coordinates 
on the discs $(D^\times_{p_1}, \ldots, D^\times_{p_l})$, 
and $(w_1, \ldots, w_l) \in \overline{W}_{(z_1, \ldots, z_l)}$.  
\end{lemma}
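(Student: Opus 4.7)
The plan is to establish formula \eqref{mainfo} as a well-defined, coordinate-independent section of $\W^*_{M/\F}$ by organizing the argument into three stages: local realization via a trivialization, covariance under changes of formal coordinates, and compatibility with the torsor/twist structure that defines $\W_{M/\F}$. First, I would pick local coordinates $(t_{p_1}, \ldots, t_{p_l})$ around the chosen points and use the trivialization $i_{(t_{p_1}, \ldots, t_{p_l})}$ from \eqref{isomo} to identify the fiber of $\W^*_{M/\F}|_{(D^\times_{p_j})}$ with $\widehat{W}_{(t_{p_1}, \ldots, t_{p_l})}$. Under this identification the right-hand side of \eqref{mainfo} is precisely the vector $[X(v_1, z_1; \ldots; v_l, z_l)]$, which is a bona fide element of $\widehat{W}_{(t_{p_1}, \ldots, t_{p_l})}$ by construction of $\widehat{W}$ in Section \ref{puzo} and the admissibility of $W$.

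The second stage is to verify coordinate independence. Given a second system $(\widetilde t_{p_j}) = (\rho_j)(t_{p_j})$ with $(\rho_1, \ldots, \rho_l) \in {\rm Aut}_l\, \Oo^{(n)}$, I would check that the transition automorphism $i^{-1}_{(\widetilde t_{p_j})} \circ i_{(t_{p_j})}$ of $\widehat{W}_{(t_{p_1}, \ldots, t_{p_l})}$ coincides with the operator ${\rm R}(\rho_1, \ldots, \rho_l)$ introduced above \eqref{norma}. The key input is the invariance computation already outlined in the text: using the chain rule $d\widehat z_j = \sum_i dz_i\, \partial_{z_i}\rho_j$, the multilinearity of the map $X$ in its differential slot, the definition of $\widehat\partial_J \rho_a$, and the representation property ${\rm R}(\rho \circ \widetilde\rho) = {\rm R}(\rho)\, {\rm R}(\widetilde\rho)$, one arrives at \eqref{norma} and concludes that ${\bf X}(\widehat v_1, \widehat z_1; \ldots; \widehat v_s, \widehat z_s) = {\bf X}(v_1, z_1; \ldots; v_s, z_s)$. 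Thus $[X(v_1, z_1; \ldots; v_l, z_l)]$ transforms exactly as the transition function of the $\widehat{W}_{(p_1, \ldots, p_l)}$-twist requires.

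The third stage is to descend this local data to an intrinsic section. Because ${\it Aut}_{(p_1, \ldots, p_l)}$ is a principal ${\rm Aut}_l\, \Oo^{(n)}$-torsor and $\widehat{W}_{(p_1, \ldots, p_l)}$ carries the representation ${\rm R}$, the twist $\V_{(p_1, \ldots, p_l)} = \widehat{W}_{(p_1, \ldots, p_l)} \times_{{\rm Aut}_l \Oo^{(n)}} {\it Aut}_{(p_1, \ldots, p_l)}$ is canonical. The equivariance just verified says precisely that the pairs $\bigl([X(v_1, z_1; \ldots; v_l, z_l)], (t_{p_1}, \ldots, t_{p_l})\bigr)$ and $\bigl([X(\widehat v_1, \widehat z_1; \ldots; \widehat v_l, \widehat z_l)], (\widetilde t_{p_1}, \ldots, \widetilde t_{p_l})\bigr)$ represent the same class in $\V_{(p_1, \ldots, p_l)}$. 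Hence \eqref{mainfo} defines an unambiguous element of the fiber of $\W^*_{M/\F}$ at $(p_1, \ldots, p_l)$, and letting the base point vary yields the claimed section.

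The main obstacle, and where the argument must be done carefully, is the identification of the operators ${\rm R}(\rho_1, \ldots, \rho_l)$ arising from the exponentiated ${\rm Der}\, \Oo^{(n)}$-action on $\widehat{W}$ (provided by the quasi-conformal structure on $W$ in Section \ref{puzo}) with the transition functions of the principal ${\rm Aut}_l\, \Oo^{(n)}$-bundle of local coordinates. This is what the hypothesis of integrability of the Lie algebra action to the group action in the definition of an admissible module, combined with the computation of \cite{BZF}, secures; any failure of this matching, or failure of the sums in the definition of $\widehat\partial_J \rho_a$ to terminate on the finite-dimensional filtered pieces $W_{t_{p_j}, \le m}$, would invalidate the descent to the twist. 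Once these two points are confirmed, the remainder of the argument is the essentially formal consolidation described above.
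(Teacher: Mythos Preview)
Your proposal is correct and follows essentially the same approach as the paper's own proof: choose a trivialization via local coordinates, verify that under a change of coordinates $(\widetilde t_{p_j})=(\rho_j)(t_{p_j})$ the transition automorphism $i^{-1}_{(\widetilde t_{p_j})}\circ i_{(t_{p_j})}$ is realized by the representation ${\rm R}(\rho_1,\ldots,\rho_l)$ of ${\rm Aut}_l\,\Oo^{(n)}$, and then use the ${\rm Aut}_l\,\Oo^{(n)}$-torsor/twist description together with the already-established invariance \eqref{norma} of ${\bf X}$ to conclude that \eqref{mainfo} defines a coordinate-independent section. Your three-stage organization and explicit flagging of the integrability and finiteness hypotheses make the logic cleaner, but the content matches the paper's argument.
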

%%%%
%%%%%%%%%%%%%%%%%%%%%%%%%%%%%%%%%%%%%%%%%%%%%%%%%%%%%%%%%%%%%%%%%%%%%%%%%%%%%%%%%%%%%%%
%%%%%%%%%%%%%%%%%%%%%%%%%%%%%%%%%%%%%%%%%%%%%%%%%%%%%%%%%%%%%%%%%%%%%%%%%%%%%%%%%%%%%%%
%%%%
\begin{proof}
Now let us proceed with the explicit construction of ${\bf X}_{ i_{   (t_{p_1}, \ldots, t_{p_l})  }}$.  
%%
%%%%
%%%%%%%%%%%%%%%%%%%%%%%%%%%%%%%%%%%%%%%%%%%%%%%%%%%%%%%%%%%%%%%%%%%%%%%%%%%%%%%
%%%%
By choosing coordinates $( t_{p_1}, \ldots, t_{p_l} )$ on a collection of discs 
$(D^\times_{ p_1}, \ldots, D^\times_{ p_l} )$,  
 we obtain a trivialization  
%%
%%%%%%%%%%%%%%%%%%%%%%%%%%%%%%%%%%%%%%%%%%%%%%%%%%%%%%%%%%%%%%%%%%%%%%%%%%%%
%%%% 
%%
$i_{(t_{p_1}, \ldots, t_{p_l})   }: {\bf X} \left( \widehat{W} [[  (t_{p_1}, \ldots, t_{p_l} )   ]] \right) 
 \; {\widetilde{} \atop \rightarrow} \;  
\Gamma \left( \W_{M/\F}|_{   (D^\times_{ p_1}, \ldots, D^\times_{ p_l} )   } \right)$, 
 of the bundle $\W_{ (D^\times_{ p_1}, \ldots, D^\times_{ p_l} )}$
which we call the $( t_{p_1}, \ldots, t_{p_l} )$-trivialization. 
%%

%%%%%%%%%%%%%%%%%%%%%%%%%%%%%%%%%%%%%%%%%%%%%%%%%%%%%%%%%%%%%%%%%%%%%%%%%%%%%%%%%%%%%%%%%%%%%%%%
%%%%
We also obtain trivializations of the fiber 
 $\widehat{W}_{(p_1, \ldots, p_l) } \; {\widetilde{}  \atop \rightarrow }  
\; \gamma \left( \W_M|_{ (D^\times_{p_1}, \ldots, D^\times_{p_l})} \right)$, 
 and its dual
$\widehat{W}^*_{(p_1, \ldots, p_l) }  \; {\widetilde{}  \atop \rightarrow} \;   
\gamma\left( \W^*_{M/\F}|_{   (D^\times_{p_1}, \ldots, D^\times_{p_l}) }    \right)$. 
%%
%%
%%%%%%%%%%%%%%%%%%%%%%%%%%%%%%%%%%%%%%%%%%%%%%%%%%%%%%%%%%%%%%%%%%%%%%%%%%%%%%%%%%%%%%%%%%
%%
Let us denote by $\left( w_1,  t_{ p_1}; \ldots; w_1,  t_{ p_l}   \right)$  the image  
of $(w_1, \ldots, w_l) \in \widehat{W}_{z_1, \ldots, z_l}$ 
in $\W_{M/\F}|_{ (D^\times_{p_1}, \ldots, D^\times_{p_l}) } $  
and by $\left( t_{ p_1}; \ldots;  t_{ p_l}    \right)$ of  
$\W^*_{M/\F}|_{  ( D^\times_{ t_{p_1} }, \ldots,  D^\times_{ t_{p_l} }  )    } $    
under $(t_{ p_1}, \ldots,   t_{ p_l})$-trivialization. 
%%
%%%%%%%%%%%%%%%%%%%%%%%%%%%%%%%%%%%%%%%%%%%%%%%%%%%%%%%%%%%%%%%%%%%%%%%%%%%%%%%%%%%%%%%%
%%%%
%%
In order to define the required section ${\bf X}( p_1, \ldots, p_l )$ 
with respect to these trivializations     
we need to attach an element of $(\widehat{W}_{(  t_{ p_1}, \ldots ,  t_{ p_l}  ) }$ 
 to each $\left( v_1, t_{p_1 }; \ldots;  v_l, t_{p_l } \right)  
\in \W_{M/\F}|_{  (D^\times_{ t_{p_1} }, \ldots,  D^\times_{ t_{p_l} })       }$,  
and a section $i_{ (t_{ p_1}, \ldots,   t_{ p_l}) }(x_1, \ldots, x_l)$  
 of $\W|_{  (D^\times_{ t_{ p_1 } }, \ldots,  D^\times_{ t_{ p_l } } )  }$ 
for $(x_1, \ldots, x_l) \in     \widehat{W}_{ (t_{p_1}, \ldots, t_{p_l}  )} $. 
%%
%%%%%%%%%%%%%%%%%%%%%%%%%%%%%%%%%%%%%%%%%%%%%%%%%%%%%%%%%%%%%%%%%%%%%%%%%%%%%
%%%% 
 It is sufficient to assign a function to the sets  
$(v_1, z_1; \ldots; v_l, z_l)$,  $(w_1, \ldots, w_l) \in \widehat{W}_{  (z_1, \ldots, z_l)  }$ 
in the $(t_{p_1}, \ldots, t_{p_l} )$-trivialization. 
%%%%
%%%%%%%%%%%%%%%%%%%%%%%%%%%%%%%%%%%%%%%%%%%%%%%%%%%%%%%%%%%%%%%%%%%%%%%%%%%
%%%%
Thus, we identify a $\widehat{W}$-valued  
section $\widetilde{\bf X}  (p_1, \ldots, p_l)$ of  
$\W^*_{  ( D^\times_{p_1}, \ldots, D^\times_{p_l}  )}$,  
with the section ${\bf X}( v_1, z_1; \ldots; v_l, z_l)$ of 
$\W_{   ( D^\times_{p_1}, \ldots, D^\times_{p_l}  ) }$ by means 
of formula \eqref{mainfo}. 
%%

%%%%%%%%%%%%%%%%%%%%%%%%%%%%%%%%%%%%%%%%%%%%%%%%%%%%%%%%%%%%%%%%%%%%%%%%%%%%%%%%%%%%%%%%%%%%
%%%%
Let $(\widetilde{ t}_{p_1 }, \ldots, \widetilde{ t}_{p_l} ) =  
   (\rho_1, \ldots, \rho_l  ) (  t_{p_1 }, \ldots,  t_{p_l}  )$ be another set of coordinates. 
%%
%%%%%%%%%%%%%%%%%%%%%%%%%%%%%%%%%%%%%%%%%%%%%%%%%%%%%%%%%%%%%%%%%%%%%%%%%%%%%%%%%%%%%%%%%%%%%
%%%%
%%
 Then, using the above arguments, we construct analogously
a section $\widetilde {\bf X} ( p_1, \ldots, p_l )$ by the formula 
%%
%%%%%%%%%%%%%%%%%%%%%%%%%%%%%%%%%%%%%%%%%%%%%%%%%%%%%%%%%%%%%%%%%%%%%%%%%%%%%%%%%%%%%%%%%%%%%
%%
\[
  \widetilde{\bf X}  (p_1, \ldots, p_l)= \left[ 
 \widetilde{X}_{ i_{  ( \widetilde{t}_{p_1}, \ldots, \widetilde{t}_{p_l} ) }} (w_1, \ldots, w_l  )
   \right]   
= \left[  X( \widetilde{v}_1, \widetilde{z}_1; \ldots; \widetilde{v}_l, \widetilde{z}_l)   \right]=
 {\bf X}( \widetilde{v}_1, \widetilde{z}_1; \ldots; \widetilde{v}_l, \widetilde{z}_l).  
\]
%%
%%
%%%%%%%%%%%%%%%%%%%%%%%%%%%%%%%%%%%%%%%%%%%%%%%%%%%%%%%%%%%%%%%%%%%%%%%%%%%%%%%%%%%%%%%%%%%%%%%%%%%%%%%%%
%%%%
 Since $\left( i^{-1}_{ ( t_{p_1}, \ldots, t_{p_l}  ) }  
\circ i_{  ( t_{p_1}, \ldots, t_{p_l}  )}   \right)$  
is an automorphism of $\widehat{W}_{ (p_1, \ldots, p_l) }$, we represent a 
   change of variables $\widetilde{t}_{ p_j } = \rho_j(z_j)$, $1 \le j \le l$,  
in terms of composition of trivializations 
%%
%%%%%%%%%%%%%%%%%%%%%%%%%%%%%%%%%%%%%%%%%%%%%%%%%%%%%%%%%%%%%%%%%%%%%%%%%%%%%%%
%%%%
\begin{equation}
\label{kozel}
\rho_j (z_j) \mapsto  i^{-1}_{ \widetilde{t}_{j, p}  } \circ i_{ t_{j, p} }, 
\end{equation}
%%
%%%%%%%%%%%%%%%%%%%%%%%%%%%%%%%%%%%%%%%%%%%%%%%%%%%%%%%%%%%%%%%%%%%%%%%%%%%%%%%%%%%%%
%%%%
and, therefore, 
relate ${\bf X}_{i_{   (\widetilde{ t  }_{p_1 }, \ldots,  \widetilde{ t  }_{p_l }     )  }}
(\widetilde{w}_1, \ldots, \widetilde {w}_l)$  
with ${\bf X}( {i_{ ( t_{p_1}, \ldots, t_{p_l}  ) } } (w_1, \ldots, w_l)$. 
%% 
%%%%%%%%%%%%%%%%%%%%%%%%%%%%%%%%%%%%%%%%%%%%%%%%%%%%%%%%%%%%%%%%%%%%%%%%%%%%%%%%%%%%%%%%%%%%%%%%%%%%%%%
%%%%
Since \eqref{kozel}  
defines a representation on $\widehat{W}$ of the group ${\rm Aut}_l\; \Oo^{(n)}$ of changes of coordinates, 
 then $\widehat{W}_{(p_1, \ldots, p_l)}$ is canonically identified with the twist of $\overline{W}$ by the 
 ${\rm Aut}_l \; \Oo^{(n)}$-torsor of formal coordinates at $(p_1, \ldots, p_l)$.  
%%
%%%%%%%%%%%%%%%%%%%%%%%%%%%%%%%%%%%%%%%%%%%%%%%%%%%%%%%%%%%%%%%%%%%%%%%%%%%%%%%%%%%%%%%%%%%%%%%%%%%%%%%%%%%%%%%%%%
%%%%
Using definition of a torsor one sees that   
 elements of the space $\widehat{W}^q_r|U$  
can be treated as 
${\rm Aut}_l\; \Oo^{(n)}$-torsor    
of the product of groups of a coordinate transformation, namely,    
%%%%
%%
that 
$(v_1, z_1; \ldots; v_l, z_l)= \left( R(\rho_l)^{-1}. 
(v_1,  \widetilde{t}_{ p_1 }; \ldots; v_l,  \widetilde{t}_{ p_l} \right)$, 
%%
%%%%
Thus, we relate the l.h.s and r.h.s. of \eqref{mainfo}.  
%%
%%%%%%%%%%%%%%%%%%%%%%%%%%%%%%%%%%%%%%%%%%%%%%%%%%%%%%%%%%%%%%%%%%%%%%%%%%%%%%%%%%%%%%%%%%%%
%%%%
 Since the element ${\bf X}(v_1, z_1; \ldots; v_l, z_l)$ is invariant 
with respect to changes of coordinates, Lemma follows.  
\end{proof}
%% 
%%%%%%%%%%%%%%%%%%%%%%%%%%%%%%%%%%%%%%%%%%%%%%%%%%%%%%%%%%%%%%%%%%%%%%%%%%%%%%%%%%%%%%%%%%%%
%%%%%%%%%%%%%%%%%%%%%%%%%%%%%%%%%%%%%%%%%%%%%%%%%%%%%%%%%%%%%%%%%%%%%%%%%%%%%%%%%%%%%%%%%%%%
%%%%%%%%%%%%%%%%%%%%%%%%%%%%%%%%%%%%%%%%%%%%%%%%%%%%%%%%%%%%%%%%%%%%%%%%%%%%%%%%%%%%%%%%%%%%
\section{Category of vertex algebra bundles on leaves of $M/\F$ and transversal sections}  
%%%%
%%%%%%%%%%%%%%%%%%%%%%%%%%%%%%%%%%%%%%%%%%%%%%%%%%%%%%%%%%%%%%%%%%%%%%%%%%%%%%%%%%%%%%%%%%%%%%%%%%%
\label{cosimplicial}
%% 
%%%%%%%%%%%%%%%%%%%%%%%%%%%%%%%%%%%%%%%%%%%%%%%%%%%%%%%%%%%%%%%%%%%%%%%%%%%%%%%%%%%%%%%%%%%%
%%
In this Section we construct canonical twisted vertex algebra $V$-module bundle $\W_{M/\F}$ 
on leaves and transversal sections of a 
codimension $p$ foliation $\F$ defined on a smooth $n$-dimensional manifold $M$.  
%% 
%%%%%%%%%%%%%%%%%%%%%%%%%%%%%%%%%%%%%%%%%%%%%%%%%%%%%%%%%%%%%%%%%%%%%%%%%%%%%%%%%%%%%%%%%
%%%%%%%%%%%%%%%%%%%%%%%%%%%%%%%%%%%%%%%%%%%%%%%%%%%%%%%%%%%%%%%%%%%%%%%%%%%%%%%%%%%%%%%%%
%%%% 
\subsection{Holonomy and transversal basis for a foliation}
%%%%
%% 
\label{holonomy}
%% 
%%%%%%%%%%%%%%%%%%%%%%%%%%%%%%%%%%%%%%%%%%%%%%%%%%%%%%%%%%%%%%%%%%%%%%%%%%%%%%%%%%
%%%%
Let us first recall \cite{CM} definitions of transversal  
basis and holonomy embeddings for a foliation $\F$.
%%  
%%%%%%%%%%%%%%%%%%%%%%%%%%%%%%%%%%%%%%%%%%%%%%%%%%%%%%%%%%%%%%%%%%%%%%%%%%%%%%%%%%%%%%%%%%
%%%%
Transversal sections $U_i$ of a foliation $\F$ passing through points 
$p_i$, $i \ge 0$, are neighborhoods 
of the leaves through $p_i$ in the leaf space $M/\F$.  
%%
%%%%%%%%%%%%%%%%%%%%%%%%%%%%%%%%%%%%%%%%%%%%%%%%%%%%%%%%%%%%%%%%%%%%%%%%%%%%
%%%%%%%%%%%%%%%%%%%%%%%%%%%%%%%%%%%%%%%%%%%%%%%%%%%%%%%%%%%%%%%%%%%%%%%%%%%%
%%
Suppose we are given a path $\alpha$ between two points 
$p_1$ and $p_2$ which belong to the same leaf of $\F$.  
%%
%%%%%%%%%%%%%%%%%%%%%%%%%%%%%%%%%%%%%%%%%%%%%%%%%%%%%%%%%%%%%%%%%%%%%%%%%%%%%
%%
For two transversal sections $U_1$ and $U_2$ passing through 
%%%% 
$p_1$ and $p_2$ one defines a transport $\alpha$  
 along the leaves from a neighborhood of $p_1 \in U_1$ to a neighborhood of $p_2 \in U_2$. 
%% 
%%%%%%%%%%%%%%%%%%%%%%%%%%%%%%%%%%%%%%%%%%%%%%%%%%%%%%%%%%%%%%%%%%%%%%%%%%%%%%%%%%%%%%%%%%%%%
%%%%
Then it is assumed that there exists a germ of a diffeomorphism
 ${\rm hol}(\alpha): (U_1, p_1)\rmap (U_2, p_2)$   
called the holonomy of $\alpha$. 
%%
%%%%%%%%%%%%%%%%%%%%%%%%%%%%%%%%%%%%%%%%%%%%%%%%%%%%%%%%%%%%%%%%%%%%%%%%%%%%%%%%%%%%
%%%%%%%%%%%%%%%%%%%%%%%%%%%%%%%%%%%%%%%%%%%%%%%%%%%%%%%%%%%%%%%%%%%%%%%%%%%%%%%%%%%%
%%%%
When the transport 
  $\alpha$ is defined in all of $U_1$ and embeds into $U_2$
then 
$h: U_1\hookrightarrow U_2$ is denoted by 
${\rm hol}(\alpha): U_1\hookrightarrow U_2$ and it is called a holonomy embedding.   
A composition of paths induces a composition of corresponding holonomy embeddings.  
%% 
%%%%%%%%%%%%%%%%%%%%%%%%%%%%%%%%%%%%%%%%%%%%%%%%%%%%%%%%%%%%%%%%%%%%%%%%%%%%%%%%%%%%
%%%%
 Two homotopic paths always define the same holonomy.
%%%%
 The holonomy groupoid 
\cite{Co, Haefl, Wi} 
is the groupoid $Hol(M, \F)$ over $M$ where arrows $p_1 \rmap p_2$ are such germs $hol(\alpha)$. 
%% 
%%%%%%%%%%%%%%%%%%%%%%%%%%%%%%%%%%%%%%%%%%%%%%%%%%%%%%%%%%%%%%%%%%%%%%%%%%%%%%%%%%%%%%
%%
A transversal basis $\U$ for $\F$ is a set of transversal sections $U_i\subset M$     
such that for a section $U_i$ passing through a point $p_i$, 
and for any transversal section $U_j$ passing through 
 $p_j \in M$, one can find a 
holonomy embedding $h: U_i\hookrightarrow U_j$ with $U_i\in \U$ and $p_j\in h(U_i)$.
%%
%%%%%%%%%%%%%%%%%%%%%%%%%%%%%%%%%%%%%%%%%%%%%%%%%%%%%%%%%%%%%%%%%%%%%%%%%%%%%%%%%%%%%%%
%%%%%%%%%%%%%%%%%%%%%%%%%%%%%%%%%%%%%%%%%%%%%%%%%%%%%%%%%%%%%%%%%%%%%%%%%%%%%%%%%%%%%%%
\subsection{Spaces of sections of $V$-bundles}
%%%%
%%
Let $M$ be endowed with a coordinate chart   
$\V=\left\{V_r, r\in \Z \right\}$. 
%% 
%%%%%%%%%%%%%%%%%%%%%%%%%%%%%%%%%%%%%%%%%%%%%%%%%%%%%%%%%%%%%%%%%%%%%%%%%%%%%%%%%%%%%%%%%%%%%%%%%%%
%%%%
Consider a (possibly infinite) arbitrary set $p_l$, $l \ge 0$, of $l$ distinct points 
and corresponding domains $V_l \subset M$.      
%% 
%%%%%%%%%%%%%%%%%%%%%%%%%%%%%%%%%%%%%%%%%%%%%%%%%%%%%%%%%%%%%%%%%%%%%%%%%%%%
%%%%
Let $\U=\left\{ U_k \right\}$, $k \ge 0$, be a transversal basis of $\F$. 
%%
%%%%%%%%%%%%%%%%%%%%%%%%%%%%%%%%%%%%%%%%%%%%%%%%%%%%%%%%%%%%%%%%%%%%%%%%%%%%
%%%%
 We chose a (possibly infinite) set  $(p_{l+1}, \ldots, p_{l+m})$ 
of arbitrary distinct $m \ge 0$ points  
on a set of sections $U_{i', b}$, $1 \le i' \le m$, $1 \le b \le k$ of $\U$.  
%% 
%%%%%%%%%%%%%%%%%%%%%%%%%%%%%%%%%%%%%%%%%%%%%%%%%%%%%%%%%%%%%%%%%%%%%%%%%%%%%%%%%%%% 
%%%%
Let us associate to each point of $(p_1, \ldots, p_l)$ and $(p_{l+1}, \ldots, p_{l+m})$  
 vertex operator algebra elements  
 $(v_{i, 1}, \ldots, v_{i, n})$, $1 \le i \le l$, 
 and $(v_{j, 1}, \ldots, v_{j, p})$, $l+1 \le j \le l+m$ correspondingly.  
%% 
%%%%%%%%%%%%%%%%%%%%%%%%%%%%%%%%%%%%%%%%%%%%%%%%%%%%%%%%%%%%%%%%%%%%%%%%%%%%%%%%%%%%%%%%%%%%%%%
%%%%
%% 
For the set of all choices of $ln+mp$ vertex operator algebra $V$ elements and 
  $\widetilde{n}=ln+mp$,  it is convenient to 
%%
%%%%%%%%%%%%%%%%%%%%%%%%%%%%%%%%%%%%%%%%%%%%%%%%%%%%%%%%%%%%%%%%%%%%%%%%%%%%%%%%%%%%%%%%%%%%%%%%
%%
  renotate the vertex operator algebra elements as
\[
(\widetilde{v}_1, \ldots, \widetilde{v}_{\widetilde{n}} ) =  
 (v_{1, 1}, \ldots, v_{l, 1}, \ldots,  v_{1, n}, \ldots, v_{l, n}, 
v_{l+1, }, \ldots, v_{l+1, p}, \ldots,  v_{l+m, 1}, \ldots, v_{l+m, p}).
\] 
%%
%%%%%%%%%%%%%%%%%%%%%%%%%%%%%%%%%%%%%%%%%%%%%%%%%%%%%%%%%%%%%%%%%%%%%%%%%%%%%%%%%%%%%%%%%%%%%%%%%%
%%%%
Endow each of points among $(p_1, \ldots, p_l)$ and $(p_{l+1}, \ldots, p_{l+m})$ with  
  sets 
$(z_{i, 1}, \ldots, z_{i, n})$, $1 \le i \le l$
 and $(z_{j, 1}, \ldots, z_{j, p})$, $l+1 \le j \le l+m$, of  
 local coordinates on domains $V_l$ and $V_{l+m}$. 
%% 
%%%%%%%%%%%%%%%%%%%%%%%%%%%%%%%%%%%%%%%%%%%%%%%%%%%%%%%%%%%%%%%%%%%%%%%%%%%%
%%%%
 Denote also 
\[
(\widetilde{z}_1, \ldots, \widetilde{z}_{\widetilde{n}} ) =   
 (z_{1, 1}, \ldots, z_{l, 1}, \ldots,  z_{1, n}, \ldots, z_{l, n}, 
z_{l+1, }, \ldots, z_{l+1, p}, \ldots,  z_{l+m, 1}, \ldots, z_{l+m, p}). 
\]
%%

%%%%%%%%%%%%%%%%%%%%%%%%%%%%%%%%%%%%%%%%%%%%%%%%%%%%%%%%%%%%%%%%%%%%%%%%%%%%%%%%%%%%
%%%%%%%%%%%%%%%%%%%%%%%%%%%%%%%%%%%%%%%%%%%%%%%%%%%%%%%%%%%%%%%%%%%%%%%%%%%%%%%%%%%%
%%%%
Now, taking into account the content of Section \ref{bundle}, and, in particular, 
 Lemma \ref{mainpro}, 
 we are on a position to introduce the spaces of sections of vertex operator algebra $V$ bundle $\W_{M/\F}$
 over leaves and transversal sections of a codimension $p$ foliation $\F$ defined on $M$. 
%%
%%%%%%%%%%%%%%%%%%%%%%%%%%%%%%%%%%%%%%%%%%%%%%%%%%%%%%%%%%%%%%%%%%%%%%%%%%%%%%%%%%%%%%%%%%%%%%%
%%%% 
Note that the space of $\F$-leaves is not in general a manifold.  
Nevertheless, one can always consider local coordinates in appropriate domains on leaves of $M/\F$  
induced by local coordinates on a chart defined on $M$. 
%%
%%%%%%%%%%%%%%%%%%%%%%%%%%%%%%%%%%%%%%%%%%%%%%%%%%%%%%%%%%%%%%%%%%%%%%%%%%%%%%%%%%%%%%%%%%%%%%
%%%%
 In this Section we provide the specific form of canonical sections ${\bf X}$ of a 
vertex operator algebra bundle $\W_{M/\F}$  
as elements of the spaces $\overline{W}$ considered on specific domains on $M/\F$ and $\U$.  
%%
%%%%%%%%%%%%%%%%%%%%%%%%%%%%%%%%%%%%%%%%%%%%%%%%%%%%%%%%%%%%%%%%%%%%%%%%%%%%%%%%%%
%%%%
In order to work with objects having coordinate invariant formulation, we consider 
elements of $\overline{W}$ with local coordinates $z$ multiplied by 
powers of corresponding differentials $dz$. 
%%
%%%%%%%%%%%%%%%%%%%%%%%%%%%%%%%%%%%%%%%%%%%%%%%%%%%%%%%%%%%%%%%%%%%%%%%%%%%%%%%%%%%
%%%%%%%%%%%%%%%%%%%%%%%%%%%%%%%%%%%%%%%%%%%%%%%%%%%%%%%%%%%%%%%%%%%%%%%%%%%%%%%%%%%
%%%% 
%%
For all choices of $l$ points and all choices of 
vertex operator algebra elements  
  for $ln \ge 0$ complex variables $(\widetilde{z}_1, \ldots, \widetilde{z}_{ln})$ defined in domains 
$V_k$, $1 \le k \le ln$ of the coordinate chart $\V$ on $M$, 
 let us consider the vector of the form \eqref{overphi0} with variables 
 $(\widetilde{v}_1, \widetilde{z}_1; \ldots;  \widetilde{v}_{ln}, \widetilde{z}_{ln})$, 
containing $\overline{W}$-rational functions $X$.           
%%
%%%%%%%%%%%%%%%%%%%%%%%%%%%%%%%%%%%%%%%%%%%%%%%%%%%%%%%%%%%%%%%%%%%%%%%%%%%%%%%%%%%%%%%%%%%%%%%%%%%%%%%%%%%%%%%%%%%%%%
%%%%
In \cite{BZF}, in the case $n=1$, they proved for primary $u$ that the vertex operator 
$Y_W(u, z) \; dz^{\wt(u)}$ is an invariant object with respect to 
changes of the local coordinate. 
%%
%%%%%%%%%%%%%%%%%%%%%%%%%%%%%%%%%%%%%%%%%%%%%%%%%%%%%%%%%%%%%%%%%%%%%%%%%%%%%%%%%%%%%%%%%%%%%%%%%%%
%%%%
%%
In previous Section we proved that the vectors ${\bf X}$ introduced above 
 as well as vertex operators  
  $\mathcal Y(u, z_i)= Y_W(u, z_i)\; dz_i^{\wt(u)}$, $i \ge 0$, for primary $u \in V$,    
are invariant with respect 
to changes of coordinates, 
i.e., to the group of coordinate transformations ${\rm Aut}_s\; \Oo^{(n)}$ on $M/\F$    
$\left(w_1, \ldots, w_s\right) \mapsto (z_1, \ldots, z_s)$, and corresponding differentials.  
%%

%%%%%%%%%%%%%%%%%%%%%%%%%%%%%%%%%%%%%%%%%%%%%%%%%%%%%%%%%%%%%%%%%%%%%%%%%%%%
%%%%%%%%%%%%%%%%%%%%%%%%%%%%%%%%%%%%%%%%%%%%%%%%%%%%%%%%%%%%%%%%%%%%%%%%%%%%
%%%%
In \cite{GF} the classical approach to cohomology of vector fields of manifolds 
was initiated. In \cite{Fei, Wag} we find an alternative way to describe 
  cohomology of Lie algebra of vector fields on a manifold in the cosimplicial setup.  
%%
%%%%
%%%%%%%%%%%%%%%%%%%%%%%%%%%%%%%%%%%%%%%%%%%%%%%%%%%%%%%%%%%%%%%%%%%%%%%%%%%%%%%%%%%%%%%%%%%%%%%%%%%
%%%%%%%%%%%%%%%%%%%%%%%%%%%%%%%%%%%%%%%%%%%%%%%%%%%%%%%%%%%%%%%%%%%%%%%%%%%%%%%%%%%%%%%%%%%%%%%%%%
%%%% 
Taking into account the standard methods of defining canonical (i.e., independent of the choice  
of covering $\U$ and coordinates) cosimplicial object \cite{Fei, Wag}   
as well as the ${\rm \check{C}}$ech-de Rham 
cohomology construction \cite{CM}, we formulate here the vertex operator algebra 
approach to cohomology of a foliation.   
%%
%%%%%%%%%%%%%%%%%%%%%%%%%%%%%%%%%%%%%%%%%%%%%%%%%%%%%%%%%%%%%%%%%%%%%%%%%%%%%%%%%%%%%%%%%%%%%%%%%%%%
%%%%%%%%%%%%%%%%%%%%%%%%%%%%%%%%%%%%%%%%%%%%%%%%%%%%%%%%%%%%%%%%%%%%%%%%%%%%%%%%%%%%%%%%%%%%%%%%%%%%
%%%%
Let $I_q =\left\{z_{i_s, j_s} \right\}$, $1 \le s \le q$, 
  be a subset (with no repetitions) of the set 
$(z_{i, 1}, \ldots, z_{i, n})$, $1 \le i \le l$
 of local variables 
corresponding to 
of $l$ points $(p_1, \ldots, p_l)$ taken on the same leaf $\mathfrak f$ of $M/\F$.  
%%
%%%%%%%%%%%%%%%%%%%%%%%%%%%%%%%%%%%%%%%%%%%%%%%%%%%%%%%%%%%%%%%%%%%%%%%%%%%%%%%%%%%%%%%%%%%%%%%%%%%%%%%%%%%
%%%%
Similarly, let $J_r=\left\{z_{i_{s'}, j_{s'}}\right\}$, $1 \le s' \le r$, 
be a subset (with no repetitions) of the set 
$(z_{i, 1}, \ldots, z_{i, p})$, $l+1 \le i \le l+m$, 
 of local variables 
corresponding to 
of $m$ points $(p_{l+1}, \ldots, p_{l+m})$ on sections of a transversal basis $\U$ of $\F$.  
%%

%%%%%%%%%%%%%%%%%%%%%%%%%%%%%%%%%%%%%%%%%%%%%%%%%%%%%%%%%%%%%%%%%%%%%%%%%%%%%%%%%%%%%%%%%%%%%%%%%%%%%%%%%%%%%%%%%%%%
%%%%%%%%%%%%%%%%%%%%%%%%%%%%%%%%%%%%%%%%%%%%%%%%%%%%%%%%%%%%%%%%%%%%%%%%%%%%%%%%%%%%%%%%%%%%%%%%%%%%%%%%%%%%%%%%%%%%
%%%%
Assume that all points $(p_1, \ldots, p_{l+m})$ belong to the same leaf $\mathfrak f$ of $M/\F$.  
 Consider a subspace   $\widehat{W}^q(V, W, U, \F)$  of vectors
 ${\bf X}(v_{i_1, j_1}, z_{i_1, j_1}; \ldots; v_{i_q, j_q}, z_{i_q, j_q})$
associated to $I_q$ for all sets of vertex operator algebra elements 
$(v_{i_1, j_1}, \ldots, v_{i_q, j_q})$ with local coordinates defined on a domain $U$ of $M$.  
%%    
%%%%%%%%%%%%%%%%%%%%%%%%%%%%%%%%%%%%%%%%%%%%%%%%%%%%%%%%%%%%%%%%%%%%%%%%%%%%%%%%%%%%%%%%%
%%%%%%%%%%%%%%%%%%%%%%%%%%%%%%%%%%%%%%%%%%%%%%%%%%%%%%%%%%%%%%%%%%%%%%%%%%%%%%%%%%%%%%%% 
%%%%
A vertex operator algebra $V$ bundle $\W_{M/\F}$ consists of the 
union of the spaces 
$\widehat{W}^q_r=\widehat{W}^q_r(V, W, \F)$, $q$, $r \ge 0$. 
%%
%%%%%%%%%%%%%%%%%%%%%%%%%%%%%%%%%%%%%%%%%%%%%%%%%%%%%%%%%%%%%%%%%%%%%%%%%%%%%%%%%%%%%%%%%
%%%%
The spaces $\widehat{W}^q_r$ consist of  
of sections of $\W_{M/\F}$ defined  
as the space $\widehat{W}^q(V, W, \F)$ of vectors ${\bf X}$ 
on each leaf $\mathfrak f$ of $\F$, 
and any subset $J_r$ of $r$  
vertex operators for all sets of vertex operator algebra elements $v_{i'_a, j'_a}$
${\mathcal Y}_W (v_{i'_a, j'_a}, z_{i'_a, j'_a})$, $1 \le a \le r$,  
with local coordinates $z_{i'_a, j'_a}$ defined in $r$ subdomains $U_{i'_a, b} \subset U_b$,  
 $1 \le b \le k$ of $k$ transversal sections $U_b$ of a transversal basis $\U$. 
%%
%%%%%%%%%%%%%%%%%%%%%%%%%%%%%%%%%%%%%%%%%%%%%%%%%%%%%%%%%%%%%%%%%%%%%%%%%%%%%%%%%%%%%%%%%%%%%%%
%%%%
Here the domain $U$ is defined as 
$U= \bigcap_{ {U_{i'_1, 1} \stackrel{h_{i'_1, 1}}{\rmap}  \ldots \stackrel{h_{i'_r, k}}{\rmap} U_{i'_r, k}, 
1 \le a \le r, \; 1 \le b \le k,
} \; } 
U_{ i'_a, b}$, 
%%
%%%%%%%%%%%%%%%%%%%%%%%%%%%%%%%%%%%%%%%%%%%%%%%%%%%%%%%%%%%%%%%%%%%%%%%%%%%%%%%%%%%%%%%%%%%%
%%%% 
where the intersection ranges over $r$ subdomains $U_{i'_a, b} \subset U_b \subset \U$ of 
$r$ local coordinates for any choice of $l$ points $(p_1, \ldots, p_l)$  
on the same leaf $\mathfrak f$ of $M/\F$ 
related by the holonomy embeddings $h_{i'_a, b}$, $1 \le a \le r$,  $1 \le b \le k$.  
In the case $k=0$ the sequence of holonomy embeddings is empty.  
It is easy to see that the definition of $\widehat{W}^q_r$  
does not depend on the choice of $\U$. 
%% 

%%%%%%%%%%%%%%%%%%%%%%%%%%%%%%%%%%%%%%%%%%%%%%%%%%%%%%%%%%%%%%%%%%%%%%%%%%%%%%%%%%%%%%%%DOL
%%%%%%%%%%%%%%%%%%%%%%%%%%%%%%%%%%%%%%%%%%%%%%%%%%%%%%%%%%%%%%%%%%%%%%%%%%%%%%%%%%%%%%%%
%%%%
The spaces $\widehat{W}^q_r$ are related by the shift operators 
  $\Delta^q_r: \widehat{W}^q_r \to \widehat{W}^{q+1}_{r-1}$, 
 increasing the upper index and decreasing the lower index in elements of $\widehat{W}^q_r$.   
%%
%%%%%%%%%%%%%%%%%%%%%%%%%%%%%%%%%%%%%%%%%%%%%%%%%%%%%%%%%%%%%%%%%%%%%%%%%%%%%%%%%%%%%%%%%%%%%%
%%
For $I_{q+1}=(i_k, j_k)$, $1 \le k \le q+1$, and $J_r=(i'_{k'}, j'_{k'})$, $1 \le k' \le r$, 
and ${\bf X} \in \widehat{W}_r^q$    
let us define  
the operator in the standard way \cite{CM, Huang} 
%%
%%%%%%%%%%%%%%%%%%%%%%%%%%%%%%%%%%%%%%%%%%%%%%%%%%%%%%%%%%%%%%%%%%%%%%%%%%%%%%%%%%%%%
%%%% 
\begin{eqnarray}
\label{hatdelta}
&& \Delta^q_r {\bf X} (v_{i_1, j_1}, z_{i_1, j_1}; 
\ldots; v_{i_q, j_q}, z_{i_q, j_q})    
\nn
&& \qquad 
 =  \mathcal Y_W (v_{i_1, j_1}, z_{i_1, j_1})    
 {\bf X}\left(v_{i_2, j_2}, z_{i_2, j_2}; \ldots; v_{i_q, j_q}, z_{i_q, j_q} 
\right)     
\nn
&& \quad +\sum_{s=1}^q(-1)^s  
 {\bf X} \left(  
v_{i_1, j_1}, z_{i_1, j_1};  \ldots; 
\mathcal Y_W (v_{i_{s-1}, j_{s-1}}, z_{i_{s-1}, j_{s-1}} -\zeta_s)  \right. 
\nn
&& \quad \qquad \left. 
\mathcal Y_W (v_{i_{s+1}, j_{s+1}}, z_{i_{s+1}, j_{s+1}}- \zeta_s) \one_V 
      ; \ldots; v_{i_q, j_q}, z_{i_q, j_q}  
\right)       
\nn
 && \qquad  +(-1)^{q+1}  
  \mathcal Y_W( v_{i_{q+1}, j_{q+1} }, z_{i_{q+1}, j_{q+1}} ) 
{\bf X} \left( v_{i_1, j_1}, z_{i_1, j_1}; \ldots;  v_{i_q, j_q}, z_{i_q, j_q} \right).  
\end{eqnarray}
%%
%%%%%%%%%%%%%%%%%%%%%%%%%%%%%%%%%%%%%%%%%%%%%%%%%%%%%%%%%%%%%%%%%%%%%%%%%%%%%%%%%%%%%%%%%%%%%%%%%%
%%%%
The shift operator $\Delta^q_r$ is chosen in such a way that its characteristics of 
 would have nice analytic and cohomological properties.    
Note that, after application of $\Delta^q_r$ on an element ${\bf X}$ containing 
local coordinates and corresponding vertex operator algebra elements associated to all $n$ dimensions,  
of $M$, the result is of such action is then related to submanifold with less number of 
local coordinates describing points $(p_1, \ldots, p_l)$. 
%% 
%%%%%%%%%%%%%%%%%%%%%%%%%%%%%%%%%%%%%%%%%%%%%%%%%%%%%%%%%%%%%%%%%%%%%%%%%%%%%%%%%%%%%%%%%%%
%%%%%%%%%%%%%%%%%%%%%%%%%%%%%%%%%%%%%%%%%%%%%%%%%%%%%%%%%%%%%%%%%%%%%%%%%%%%%%%%%%%%%%%%%%%
%%%% 
For $q=2$, there exists a subspace $\widehat{W}_{\vartheta }^2$ 
of $\widehat{W}_0^2$     
containing $\widehat{W}_r^2$ for all $r \ge 1$ such that     
$\Delta^2_r$ is defined on this subspace.  
%% 
%%%%%%%%%%%%%%%%%%%%%%%%%%%%%%%%%%%%%%%%%%%%%%%%%%%%%%%%%%%%%%%%%%%%%%%%%%%%%%%%%%%%%%%%%%%%%%%%
%%%%
 For $J_3=(i_k, j_k)$, $1 \le k \le 3$,  the operator $\Delta^2_{\vartheta}$   
is defined for ${\bf X}\in \widehat{W}_{\vartheta}^2$ by a particular case of \eqref{hatdelta}.   
%%
%%%%%%%%%%%%%%%%%%%%%%%%%%%%%%%%%%%%%%%%%%%%%%%%%%%%%%%%%%%%%%%%%%%%%%%%%%%%%%%%%%%%%%%%%%%%%%%%
%%%%
With the shift operator $\Delta^q_r$ we obtain the sequences: 
$\widehat{W}_r^0
\stackrel{ \Delta^0_r }{\longrightarrow}
\widehat{W}_{r-1}^1
\stackrel{\Delta^1_{r-1} }{\longrightarrow} 
(\widehat{W}_{r-2}^2, \delta_{r, 3} \; \widehat{W}^2_\vartheta ) 
 \stackrel{(\Delta^{r-2}_2, \; \delta_{r, 3} \; \Delta^2_\vartheta )}{\longrightarrow} 
\cdots 
\stackrel{\Delta^{r-1}_1}{\longrightarrow} 
\widehat{W}_0^r$. 
%%

%%%%%%%%%%%%%%%%%%%%%%%%%%%%%%%%%%%%%%%%%%%%%%%%%%%%%%%%%%%%%%%%%%%%%%%%%%%%%%%%%%%%%%%%%%%%%%%%%%%%%%%%
%%%%%%%%%%%%%%%%%%%%%%%%%%%%%%%%%%%%%%%%%%%%%%%%%%%%%%%%%%%%%%%%%%%%%%%%%%%%%%%%%%%%%%%%%%%%%%%%%%%%%%%%
%%%%
The construction of the vertex operator algebra $V$-bundle $\W_{M/\F}$ provides a description of 
the holonomy groupoid $Hom(M, \F)$ introduced in Subsection \ref{holonomy} in terms of holonomy embeddings.  
We consider the spaces of vectors ${\bf X} (h_{i_1, 1}, \ldots, h_{i_r, k}) ={\bf X}|_{U}$ taken on 
all leaves $\mathfrak f$ of $M/\F$.    
%%
%%%%%%%%%%%%%%%%%%%%%%%%%%%%%%%%%%%%%%%%%%%%%%%%%%%%%%%%%%%%%%%%%%%%%%%%%%%%%%%%%%%%
%%%%
In terms of holonomy embedding, 
the shift operator (c.f. \cite{CM}) in this case is given by the standard differentials. 
The vertical differential
 $\widehat{W}^q_r \rmap \widehat{W}^q_{r+1}$ is $(-1)^q d^{-1}$ where $d$ 
is the usual De Rham differential.
%%
%%%%%%%%%%%%%%%%%%%%%%%%%%%%%%%%%%%%%%%%%%%%%%%%%%%%%%%%%%%%%%%%%%%%%%%%%%%%%%%%%%
%%%%
The horizontal differential $\Delta: \widehat{W}^q_r \rmap \widehat{W}^{q+1}_r$   
is $\Delta = \sum(-1)^i\Delta_i$,  
where
%%
%%%%%%%%%%%%%%%%%%%%%%%%%%%%%%%%%%%%%%%%%%%%%%%%%%%%%%%%%%%%%%%%%%%%%%%%%%%%%%%%%%%%%%%%%%
%%%%
\begin{eqnarray}
\label{deltasy} 
\nonumber 
\Delta_i {\bf X} (h_1, \ldots , h_{q+1}) &=&    
\delta_{i, 0} \; h_1{}^* \; {\bf X}(h_2, \ldots , h_{q+1})     
 + \delta_{i, q+1} {\bf X}(h_1, \ldots, h_q)  
\\
&+&(1-\delta_{i, 0}) (1-\delta_{i, q+1}) {\bf X}(h_1, \ldots, h_{i+1}h_i, \ldots, h_{k+1}).  
\end{eqnarray}
%%
%%%%%%%%%%%%%%%%%%%%%%%%%%%%%%%%%%%%%%%%%%%%%%%%%%%%%%%%%%%%%%%%%%%%%%%%%%%%%%%%%%%%%%%%%%%%%%%%%
%%%%%%%%%%%%%%%%%%%%%%%%%%%%%%%%%%%%%%%%%%%%%%%%%%%%%%%%%%%%%%%%%%%%%%%%%%%%%%%%%%%%%%%%%%%%%%%%%
%%%% 
The category $\mathfrak W_{M/\F}$ of vertex operator algebra admissible $V$-bundles $\W_{M/\F}$   
for a foliation $\F$ consists of objects $\W_{M/\F}$ with      
morphisms provided by intertwining operators \cite{DL}.  
%% 
%%%%%%%%%%%%%%%%%%%%%%%%%%%%%%%%%%%%%%%%%%%%%%%%%%%%%%%%%%%%%%%%%%%%%%%%%%%%%%%%%%%%%%%%%%%%%%%%%%%%
%%%%%%%%%%%%%%%%%%%%%%%%%%%%%%%%%%%%%%%%%%%%%%%%%%%%%%%%%%%%%%%%%%%%%%%%%%%%%%%%%%%%%%%%%%%%%%%%%%%% 
%%%%
\subsection{Characteristics of bundle $\W_{M/\F}$-sections} 
\label{tuzla}
%%
%%%%%%%%%%%%%%%%%%%%%%%%%%%%%%%%%%%%%%%%%%%%%%%%%%%%%%%%%%%%%%%%%%%%%%%%%%%%%%%%%%%%%%%%%%%%%%%%%%%%%
%%%% 
In the definition of spaces $\widehat{W}$    
  sequences of holonomy embeddings $h_i$, $i \ge 0$ were involved. 
%%
%%%%%%%%%%%%%%%%%%%%%%%%%%%%%%%%%%%%%%%%%%%%%%%%%%%%%%%%%%%%%%%%%%%%%%%%%%%%%%%%%
%%%% 
For germs $hol(\alpha)$ of the groupoid $Hol(M, \F)$ over $M$   
we define the spaces $\widetilde{X}$ of vectors ${\bf X}$ defined in previous subsections. 
The holonomy groupoid 
is the groupoid $Hol(M, \F)$ over $M$ where arrows $p_1 \rmap p_2$ are such germs $hol(\alpha)$. 
%% 
%%%%%%%%%%%%%%%%%%%%%%%%%%%%%%%%%%%%%%%%%%%%%%%%%%%%%%%%%%%%%%%%%%%%%%%%%%%%%%%%%%%%%%%%%%%%%%%%%%%%
%%%%
In this Section we prove the main result of this paper for the category of  
$V$-bundles for foliations defined on a complex manifold. 
%% 
%%%%%%%%%%%%%%%%%%%%%%%%%%%%%%%%%%%%%%%%%%%%%%%%%%%%%%%%%%%%%%%%%%%%%%%%%%%%%%%%%%%%%%%%
%%%%
Recall that the cohomology of $Hol(V, \F)$ determines the cohomology of a foliation 
$\F$ \cite{CM}. 
%%
%%%%%%%%%%%%%%%%%%%%%%%%%%%%%%%%%%%%%%%%%%%%%%%%%%%%%%%%%%%%%%%%%%%%%%%%%%%%%%%%%%%%%%%%%%%%%
%%%%  
The main advantage of Lemma \ref{groupo} provided at the end of this Section, 
is that by using vertex operator algebra properties  
we are able to compute explicitly the cohomology of the holonomy groupoid 
$Hol(M, \F)$ in terms of special functions. 
%%
%%%%%%%%%%%%%%%%%%%%%%%%%%%%%%%%%%%%%%%%%%%%%%%%%%%%%%%%%%%%%%%%%%%%%%%%%%%%%%%%
%%%%%%%%%%%%%%%%%%%%%%%%%%%%%%%%%%%%%%%%%%%%%%%%%%%%%%%%%%%%%%%%%%%%%%%%%%%%%%%%
%%%%  
%% 
For meromorphic functions 
 of several complex variables  
 defined on sets of open  
 domains of $M$ with local coordinates $z_{i, j}$     
 which are extendable rational functions $f(z_{i, j})$ 
 on larger domains on $M$  we denote such extensions by $R(f(z_{i, j} ))$.   
%%
%%%%%%%%%%%%%%%%%%%%%%%%%%%%%%%%%%%%%%%%%%%%%%%%%%%%%%%%%%%%%%%%%%%%%%%%%%%%%%%%%%%%
%%%% 
For a set of $\overline{W}_{ ( z_{i,j} ) }$-defining elements $(v_{i,j})$     
we consider the converging 
rational 
functions $f(v_{i,j}, z_{i,j})\in \overline{W}_{ ( z_{i,j} ) }$ of $ z_{i,j} \in F_{ln}\mathbb C$. 
%%

%%%%%%%%%%%%%%%%%%%%%%%%%%%%%%%%%%%%%%%%%%%%%%%%%%%%%%%%%%%%%%%%%%%%%%%%%%%%%%%%%%%%%%%%%%%%%%%%%%%%%%%%%%%%%%
%%%%%%%%%%%%%%%%%%%%%%%%%%%%%%%%%%%%%%%%%%%%%%%%%%%%%%%%%%%%%%%%%%%%%%%%%%%%%%%%%%%%%%%%%%%%%%%%%%%%%%%%%%%%%% 
%%%% 
By involving the definition of $\widehat{W}^q_r$ it is possible to introduce 
a vertex operator algebra $V$  
 cohomology of the leaves space $M/\F$ of a foliation $\F$.  
%%
%%%%%%%%%%%%%%%%%%%%%%%%%%%%%%%%%%%%%%%%%%%%%%%%%%%%%%%%%%%%%%%%%%%%%%%%%%%%%%
%%%%
Let us consider the spaces $C^q_r=C^q_r(V, W, \F)$ containing vectors of 
rational functions    
 provided by vectors of characteristics $ [ \Omega(X) ]$ of ${\bf X}$-entries.      
For any ${\bf X} \in \widehat{W}^q_r$,  
the map $\Delta^q_r$ induces the map $\delta^q_r$ by $[\Omega(X)]$.  
%% 
%%%%%%%%%%%%%%%%%%%%%%%%%%%%%%%%%%%%%%%%%%%%%%%%%%%%%%%%%%%%%%%%%%%%%%%%%%%%%%%%%%%%%%%
%%%%
The coboundary operator $\delta^q_r$ exhibits the chain-cochain property if characteristics of 
entries $X$ of ${\bf X}$ satisfy the following conditions. 
%%

%%%%%%%%%%%%%%%%%%%%%%%%%%%%%%%%%%%%%%%%%%%%%%%%%%%%%%%%%%%%%%%%%%%%%%%%%%%%%%%%%%%%%%%%%%%%%%%%%%
%%%%%%%%%%%%%%%%%%%%%%%%%%%%%%%%%%%%%%%%%%%%%%%%%%%%%%%%%%%%%%%%%%%%%%%%%%%%%%%%%%%%%%%%%%%%%%%%%%  
%%%%
For sets $p_j$, $1 \le j \le l$ of $l$ points on the same leaf of $M/\F$, 
 we consider a map   
  $X(v_{i,1}, z_{i, 1};  \ldots; v_{i,n}, z_{i, n}): 
V^{\otimes ln} \in \overline{W}[[z_1, \ldots, z_r ]]$,
 $1 \le i \le l$,
combined with a set of $mp$ vertex operators at points $p_{l+k}$, $1 \le k \le m$, 
such that its characteristic $\Omega(X)$ satisfy the following properties. 
%%
%%%%%%%%%%%%%%%%%%%%%%%%%%%%%%%%%%%%%%%%%%%%%%%%%%%%%%%%%%%%%%%%%%%%%%%%%%%%%%%%%%%%%%%%%%
%%%%%%%%%%%%%%%%%%%%%%%%%%%%%%%%%%%%%%%%%%%%%%%%%%%%%%%%%%%%%%%%%%%%%%%%%%%%%%%%%%%%%%%%%%%
%%%% 
We imply certain conditions on the characteristics 
$\Omega(v_1, z_1; \ldots; v_s, z_s)=\Omega(X(v_1, z_1; \ldots; v_s, z_s))$ for 
 elements $X(v_1, z_1; \ldots; v_s, z_s)$
by to the relations mentioned below
  to be coherent with definitions for $\overline{W}$ given in \cite{Huang}.  
%% 
%%%%%%%%%%%%%%%%%%%%%%%%%%%%%%%%%%%%%%%%%%%%%%%%%%%%%%%%%%%%%%%%%%%%%%%%%%%%%%%%%%%%%%%%
%%%%  
We require that for $i=1, \ldots, s$, 
\[
  \partial_{z_i}   
\Omega(v_1, z_1; \ldots; v_s, z_s)  
 =   
 \Omega(v_1, z_1; \ldots; v_{i-1}, z_{i-1}; L_V(-1) v_i, z_i;   
 v_{i+1}, z_{i+1}; \ldots; v_s, z_s),
\]        
%%
%%%%
\[
 \left( \partial_{z_1} + \cdots + \partial_{z_s} \right) 
 \Omega(v_1, z_1; \ldots; v_s, z_s) 
=  L_W(-1) 
\Omega(v_1, z_1;  \ldots; v_s, z_s). 
\]
%%
%%
%%%%%%%%%%%%%%%%%%%%%%%%%%%%%%%%%%%%%%%%%%%%%%%%%%%%%%%%%%%%%%%%%%%%%%%%%%%
%%%% 
Since $L_W(-1)$ is a weight-one operator on $W$,  
$e^{zL_W(-1)}$ is a linear 
operator on $\overline{W}$ for any $z\in \C$. 
%% 
%%%%%%%%%%%%%%%%%%%%%%%%%%%%%%%%%%%%%%%%%%%%%%%%%%%%%%%%%%%%%%%%%%%%%%
%%%%
For a linear map  $X$  
 with $(v_1, \ldots, v_s) \in V^{\otimes s}$,  
  $(z_1, \ldots, z_s) \in F_s \C$,  $z\in \C$ such that 
$(z_1+z, \dots,  z_s+ z)\in F_s\C$, 
the characteristics   
%%
%%%%%%%%%%%%%%%%%%%%%%%%%%%%%%%%%%%%%%%%%%%%%%%%%%%%%%%%%%%%%%%%%%%%%
%%%%
\[
  e^{zL_W(-1)}  
\Omega(v_1, z_1; \ldots; v_s, z_s)    
= 
\Omega(v_1, z_1+z; \ldots; v_s, z_s+z),
\]     
and for $(v_1, \dots, v_s)\in V^{\otimes s}$,
 $(z_1, \ldots, z_s)\in F_s \C$, $z \in \C$ and $1\le i\le s$ such that  
$(z_1, \ldots, z_{i-1}, z_i+z, z_{i+1}, \dots, z_s)\in F_s\C$, 
the power series expansion of the characteristic of the element 
$\Omega(v_1, z_1; \ldots; v_{i-1}, z_{i-1};  v_i, z_{i}+z; v_{i+1}, z_{i+1}; \ldots;  v_s, z_s)$,      
in $z$ are equal to the power series expansion of the characteristic  
%%
%%%%%%%%%%%%%%%%%%%%%%%%%%%%%%%%%%%%%%%%%%%%%%%%%%%%%%%%%%%%%%%%%%%%%%%%%%%%%%%%%%%
%%%%   
%%
$\Omega(v_1, z_1; \ldots; v_{i-1}, z_{i-1}; e^{zL_V(-1)}v_i, z_i;$ 
 $v_{i+1}, z_{i+1}; \ldots; v_s, z_s)$,      
in $z$.
In particular, the power series in $z$ is absolutely convergent
  on the open disc $|z|<\min_{i\ne j}\{|z_i-z_j|\}$. 
%%
%%
%%%%%%%%%%%%%%%%%%%%%%%%%%%%%%%%%%%%%%%%%%%%%%%%%%%%%%%%%%%%%%%%%%%%%%%%%%%%%%%
%%%%
In addition to that,  
for $(v_1, \dots, v_s) \in V^{\otimes s}$,  
  $(z_1, \ldots, z_s) \in F_s\C$ and $z\in \C^\times$ so that  
$(zz_1, \ldots, zz_s)\in F_s \C$,
a linear map $X: V^{\otimes s}\to  
\overline{W}$  
 the characteristics   
%%
%%%%%%%%%%%%%%%%%%%%%%%%%%%%%%%%%%%%%%%%%%%%%%%%%%%%%%%%%%%%%%%%%%%%%%%%%%%%%
%%%% 
%%
\[
  z^{L_W(0)}  
 \Omega(v_1, z_1; \ldots; v_s, z_s) 
 = 
 \Omega(z^{L_V(0)} v_1, zz_1; \ldots; z^{L_V(0)} v_s, zz_s).
\]   
should coincide. 
%%
%%%% 

%%%%%%%%%%%%%%%%%%%%%%%%%%%%%%%%%%%%%%%%%%%%%%%%%%%%%%%%%%%%%%%%%%%%%%%%%%%%%%%%%%%%%%%%
%%%%%%%%%%%%%%%%%%%%%%%%%%%%%%%%%%%%%%%%%%%%%%%%%%%%%%%%%%%%%%%%%%%%%%%%%%%%%%%%%%%%%%%%%
%%%%
 Recall now the definition of shuffles.   
For $l > 0$ and $1\le s \le l-1$, let $J_{l; s}$ be the set of elements of 
$S_l$ which preserve the order of the first $s$ and the last 
$l-s$ numbers,  i. e., 
$J_{l, s}=\{\sigma\in S_l \;|\;\sigma(1)<\ldots <\sigma(s),\;
\sigma(s+1)<\ldots <\sigma(l)\}$. 
The elements of $J_{l; s}$ are then called shuffles. 
We will use the notation $J_{l; s}^{-1}=\{\sigma\;|\;
\sigma\in J_{l; s}\}$ for them.
%%   
%%%%%%%%%%%%%%%%%%%%%%%%%%%%%%%%%%%%%%%%%%%%%%%%%%%%%%%%%%%%%%%%%%%%%%%%%%%%%%%%%%%%%%%%%%%%%%%%%
%%%%   
%% 
Finally, define the left action of the permutation group $S_r$ on $\overline{W}$ by 
$\sigma(f)(z_1, \ldots, z_r)=f(z_{\sigma(1)}, \ldots, z_{\sigma(r)})$,     
 for $f\in \overline{W}$.    
We require that  
\[
\sum_{\sigma\in J_{l; s}^{-1}}(-1)^{|\sigma|} 
\sigma(\Omega 
(v_{\sigma({1,1})}, z_{\sigma({1,1})}; \ldots; v_{\sigma({l,1})}, 
\ldots, v_{\sigma({1,n})}, \ldots, v_{\sigma(l,n)})
)=0.
\] 
%%

%%%%%%%%%%%%%%%%%%%%%%%%%%%%%%%%%%%%%%%%%%%%%%%%%%%%%%%%%%%%%%%%%%%%%%%%%%%%%%%%%%
%%%%%%%%%%%%%%%%%%%%%%%%%%%%%%%%%%%%%%%%%%%%%%%%%%%%%%%%%%%%%%%%%%%%%%%%%%%%%%%%%%
%%%%
Denote by $P_s: \overline{W} \to \overline{W}_{(s)}$, 
 the projection of $\overline{W}$ on $\overline{W}_{(s)}$. 
Denote by $(l_i, \ldots, l_{\widetilde{n}})$ a partition of ${\widetilde{n}}$    
of $\widetilde{n} =\sum_{i \ge 1} l_i$, $k_i=l_{1}+\cdots +l_{i-1}$,  
and $\zeta_i \in \C$. 
%%
%%%%%%%%%%%%%%%%%%%%%%%%%%%%%%%%%%%%%%%%%%%%%%%%%%%%%%%%%%%%%%%%%%%%%%%%%%%%%%%%%%%%%%%%%%%%%%%%%%%%%%
%%%% 
%% 
Consider the local coordinates $(\widetilde{z}_{ln+1}, \ldots, \widetilde{z}_{\widetilde{n}})$ 
of points $(p_{l+1}, \ldots, p_{l+m})$ bounded    
 in the domains  
$|\widetilde{z}_{k_i+ k'} -\zeta_i| + |\widetilde{z}_{k_j+k''}-\zeta_j|< |\zeta_i -\zeta_j|$,   
for $i$, $j=1, \dots, n$, $i\ne j$, and for $k'=1, \dots$,  $l_i$, $k''=1$, $\dots$, $l_j$.   
%%
%%%%%%%%%%%%%%%%%%%%%%%%%%%%%%%%%%%%%%%%%%%%%%%%%%%%%%%%%%%%%%%%%%%%%%%%%%%%%%%%
%%%% 
%%
For $\widetilde{k}_i= k_i+l_i$, define  
$f_i=\Omega \left( 
  \mathcal Y_W ( \widetilde{v}_1,  \widetilde{z}_1 - \zeta_i )   
 \ldots  
 \mathcal Y_W (  \widetilde{v}_{ \widetilde{k}_i }, \widetilde{z}_{\widetilde{k}_i} - \zeta_i )  
\right)$, 
for $i=1, \dots, ln$. 
%% 

%%%%%%%%%%%%%%%%%%%%%%%%%%%%%%%%%%%%%%%%%%%%%%%%%%%%%%%%%%%%%%%%%%%%%%%%%%%%%%%%%%%%%%%%%%%%%%%%%%%%%%%%%
%%%%%%%%%%%%%%%%%%%%%%%%%%%%%%%%%%%%%%%%%%%%%%%%%%%%%%%%%%%%%%%%%%%%%%%%%%%%%%%%%%%%%%%%%%%%%%%%%%%%%%%%%
%%%%
Assume that there exist positive integers $\beta(\widetilde{v}_{l', i}, \widetilde{v}_{l", j})$ 
depending only on $\widetilde{v}_{l', i}$ and $\widetilde{v}_{l'', j}$  for 
$i$, $j=1, \dots, \widetilde{n}$, $i\ne j$, $ 1 \le l', l'' \le \widetilde{n}$, 
such that the characteristic 
\begin{equation}
\label{pairing1}
 \Omega \left( \sum\limits_{ r_1, \ldots, r_{ \widetilde{n} } \in \Z}    
  X \left(P_{r_1}  f_1, \zeta_1;  \ldots;  
  P_{r_{\widetilde{n}}}  f_{\widetilde{n}},  \zeta_{\widetilde{n}}  \right) \right), 
\end{equation}   
is absolutely convergent in the domains defined above to an analytic extension 
in $(\widetilde{z}_1, \ldots, \widetilde{z}_{\widetilde{n}})$ 
independently of complex parameters $(\zeta_1, \ldots, \zeta_{\widetilde{n}})$,
with poles of order less than or equal to $\beta(\widetilde{v}_{l',i}, \widetilde{v}_{l'', j})$
allowed only on the diagonal of  
$(\widetilde{z}_1, \ldots, \widetilde{z}_{\widetilde{n}})$. 
%%    
%%
%%%%%%%%%%%%%%%%%%%%%%%%%%%%%%%%%%%%%%%%%%%%%%%%%%%%%%%%%%%%%%%%%%%%%%%%%%%%%%%%%%%%%%%%%
%%%%%%%%%%%%%%%%%%%%%%%%%%%%%%%%%%%%%%%%%%%%%%%%%%%%%%%%%%%%%%%%%%%%%%%%%%%%%%%%%%%%%%%%% 
%%%%
We assume that for $(\widetilde{v}_1, \ldots, \widetilde{v}_{\widetilde{n}})$,  
the characteristic  
\begin{equation}
\label{pairing2}
 \Omega\left(\sum_{q\in \C} \mathcal Y_W (v_{l+1, 1}, z_{l+1, 1}) \ldots   
 \mathcal Y_W (v_{l+m, 1}, z_{l+m, p})   
  P_q X\left( v_{1,1}, z_{1, 1}; \ldots; v_{l, 1}, z_{l, n} \right) \right),
\end{equation}    
 (incorporating local coordinates on $M$ and transversal sections)  
is absolutely convergent 
on the domains   
$|\widetilde{z}_i|>|\widetilde{z}_s|>0$, for $i=1, \dots, m$, and  
$s=m+1, \dots, m+l$,  
 when $\widetilde{z}_i\ne \widetilde{z}_j$, $i\ne j$ 
 and the sum is analytically extendable to a  
rational function 
in $(\widetilde{z}_1,  \ldots, z_{\widetilde{n} } )$ 
with poles of orders less than or equal to 
$\beta(\widetilde{v}_{l', i}, \widetilde{v}_{l'', j})$ allowed  
$\widetilde{z}_i=\widetilde{z}_j$.  
%%

%%%%%%%%%%%%%%%%%%%%%%%%%%%%%%%%%%%%%%%%%%%%%%%%%%%%%%%%%%%%%%%%%%%%%%%%%%%%%%%%%%%%%%%%%%%%
%%%%%%%%%%%%%%%%%%%%%%%%%%%%%%%%%%%%%%%%%%%%%%%%%%%%%%%%%%%%%%%%%%%%%%%%%%%%%%%%%%%%%%%%%%%%
%%%%
For $q=2$,  
 for $\widetilde{v}_1$, $\widetilde{v}_2$, $\widetilde{v}_3 \in V$, 
 the characteristics 
$\sum_{s \in \C} \Omega(
  \mathcal Y_W (\widetilde{v}_1, \widetilde{z}_1)  \mathcal Y_W( P_s( X(\widetilde{v}_2, \widetilde{z}_2 -\zeta; 
\widetilde{v}_3, \widetilde{z}_3-\zeta)), \zeta))$      
 $+ \Omega(\widetilde{v}_1, \widetilde{z}_1;  
P_s(   \mathcal Y_V (\widetilde{v}_2, \widetilde{z}_2-\zeta) 
  \mathcal Y_V(\widetilde{v}_3; \widetilde{z}_3-\zeta) \one_V), \zeta)$,     
and 
 $\sum_{s\in \C}$  
 $\Omega ($ $P_s( Y_V(\widetilde{v}_1, \widetilde{z}_1 -\zeta)  
\mathcal Y_V(\widetilde{v}_2; \widetilde{z}_2-\zeta) \one_V), \zeta)  
 \mathcal Y_V(\widetilde{v}_3, \widetilde{z}_3)$  
$+ \mathcal Y_W(\widetilde{v}_3, \widetilde{z}_3) 
\mathcal Y_W( P_s( X(\widetilde{v}_1, \widetilde{z}_1-\zeta; \widetilde{v}_2, \widetilde{z}_2-\zeta) ), \zeta)$, 
defined on 
the domains 
$|\widetilde{z}_1-\zeta|>|\widetilde{z}_2-\zeta|$, $|\widetilde{z}_2-\zeta|>0$, 
and 
$|\zeta-\widetilde{z}_3|>|\widetilde{z}_1-\zeta|, |\widetilde{z}_2-\zeta|>0$, 
 respectively,  
are absolutely convergent and  
  analytically extendable to  
rational functions in $\widetilde{z}_1$ and $\widetilde{z}_2$ 
with poles allowed only at   
$\widetilde{z}_1$, $\widetilde{z}_2=0$, and $\widetilde{z}_1=\widetilde{z}_2$. 
%%
%%%%%%%%%%%%%%%%%%%%%%%%%%%%%%%%%%%%%%%%%%%%%%%%%%%%%%%%%%%%%%%%%%%%%%%%%%%%%%%%%%%%%%%%%%%
%%%%%%%%%%%%%%%%%%%%%%%%%%%%%%%%%%%%%%%%%%%%%%%%%%%%%%%%%%%%%%%%%%%%%%%%%%%%%%%%%%%%%%%%%%%
\subsection{The bundle dual to $\W_{M/\F}$ on $M/\F$}   
%%%% 
The conditions on characteristics described in the previous Subsection  
allow to define 
 a fiber bundle on the transversal sections of $M/\F$ in the dual formulation. 
This gives us an idea how to use the notion of a dual vertex operator algebra bundle associated to 
transversal sections of a foliation. 
%%
%%
%%%%%%%%%%%%%%%%%%%%%%%%%%%%%%%%%%%%%%%%%%%%%%%%%%%%%%%%%%%%%%%%%%%%%%%%%%%%%%%%%%%%%%
%%%%%%%%%%%%%%%%%%%%%%%%%%%%%%%%%%%%%%%%%%%%%%%%%%%%%%%%%%%%%%%%%%%%%%%%%%%%%%%%%%%%%% 
%%%% 
The condition \eqref{pairing1} for the grading together with conditions on orders  
of poles, and then the canonical pairing \eqref{pairing2}  
give rise to a pairing
  $\gamma \left(\W^\dagger_{\U}|_{ ( D^\times_{ i, p_1}, \ldots,  D^\times_{ i, p_l})    } \right)  \times 
  \gamma \left(\W_{M/\F}|_{( D^\times_{ i, p_1}, \ldots,  D^\times_{ i, p_l}) } \right)   \rightarrow \C^l$,    
for corresponding space of fibers.   
%%
%%%%%%%%%%%%%%%%%%%%%%%%%%%%%%%%%%%%%%%%%%%%%%%%%%%%%%%%%%%%%%%%%%%%%%%%%%%%%%%%%%%%%%
%%%%%%%%%%%%%%%%%%%%%%%%%%%%%%%%%%%%%%%%%%%%%%%%%%%%%%%%%%%%%%%%%%%%%%%%%%%%%%%%%%%%%%
%%%%
For each fiber $\mu$ of $\W_{M/\F}|_{ ( D^\times_{ i, p_1}, \ldots,  D^\times_{ i, p_l}) }$   
we obtain a linear 
operator on $\W_{\U}^\dagger|_{ ( D^\times_{ i, p_1}, \ldots,  D^\times_{ i, p_l}) }$ given by 
 this pairing.   
%%
%%%%%%%%%%%%%%%%%%%%%%%%%%%%%%%%%%%%%%%%%%%%%%%%%%%%%%%%%%%%%%%%%%%%%%%%%%%%%%%%%
%%%%%%%%%%%%%%%%%%%%%%%%%%%%%%%%%%%%%%%%%%%%%%%%%%%%%%%%%%%%%%%%%%%%%%%%%%%%%%%%%
%%%% 
%%
Thus, we obtain a well-defined linear map 
\begin{equation}
\label{tosca}
 \W_{\U}^\dagger{}_{ ( D^\times_{ i, p_1}, \ldots,  D^\times_{ i, p_l})  }: 
 \gamma\left(\W_{M/\F}|_{( D^\times_{ i, p_1}, \ldots,  D^\times_{ i, p_l}) }
\right)    
\rightarrow {\rm End} 
\left( \gamma \left(\W_{M/\F}|_{ ( D^\times_{ i, p_1}, \ldots,  D^\times_{ i, p_l})   }\right) \right),  
\end{equation}
i.e., the fibers expressed in terms of vertex operators defined on transversal sections. 
%% 
%%%%%%%%%%%%%%%%%%%%%%%%%%%%%%%%%%%%%%%%%%%%%%%%%%%%%%%%%%%%%%%%%%%%%%%%%%%%%%%%%%%%%%%%%%%%%%%%%%%%
%%%%%%%%%%%%%%%%%%%%%%%%%%%%%%%%%%%%%%%%%%%%%%%%%%%%%%%%%%%%%%%%%%%%%%%%%%%%%%%%%%%%%%%%%%%%%%%%%%%%
%%%%
For formal coordinates $( t_{i, p_j} )$, $1 \le i \le n-p$, $1 \le j\le l$,   
on $( D^\times_{ i, p_1}, \ldots,  D^\times_{ i, p_l})$, 
 a fiber $X(v_{i, j}, z_{i, j})$     
of $\W_{M/\F}|_{  ( D^\times_{ i, p_1}, \ldots,  D^\times_{ i, p_l})   }
$   

with elements of $\overline{W}_{(p_1, \ldots, p_l)}$   
 with respect to the $(  t_{i, p_j}  )$-trivialization, the map 
 \eqref{tosca} is given by $(v_{i, j}, z_{i, j})$.   
%%
%%%%%%%%%%%%%%%%%%%%%%%%%%%%%%%%%%%%%%%%%%%%%%%%%%%%%%%%%%%%%%%%%%%%%%%%%%%%%%%%%%%%%%%%
%%%%%%%%%%%%%%%%%%%%%%%%%%%%%%%%%%%%%%%%%%%%%%%%%%%%%%%%%%%%%%%%%%%%%%%%%%%%%%%%%%%%%%%%
%%%%
Starting from an admissible vertex operator algebra $V$-module $W$, and applying   
 Lemma\ref{mainpro} 
we construct explicitly 
 a fiber bundle $\W_{M/\F}$ over $M/\F$,  
with canonical sections $X(p_1, \ldots, p_n)$ of 
$\W_{M/\F}|_{( D^\times_{ i, p_1}, \ldots,  D^\times_{ i, p_l})   }$ 
and fibers with values in ${\rm End} \left(\overline{W}_{(p_1, \ldots, p_n)}\right)$  
for any set of non-intersecting discs  
$( D^\times_{ i, p_1}, \ldots,  D^\times_{ i, p_l})$, $1 \le i \le n-p$ on $M/\F$.  
%%%%
%%
%%%%%%%%%%%%%%%%%%%%%%%%%%%%%%%%%%%%%%%%%%%%%%%%%%%%%%%%%%%%%%%%%%%%%%%%%%%%%%%%%%%%%%%%
%%%%%%%%%%%%%%%%%%%%%%%%%%%%%%%%%%%%%%%%%%%%%%%%%%%%%%%%%%%%%%%%%%%%%%%%%%%%%%%%%%%%%%%% 
%%%%
\subsection{A relation for $\W_M$ and $Hol(M, \F)$ cohomologies}
In this Subsection we provide Lemma \ref{groupo} 
relating the vertex operator algebra $V$ cohomology 
of the holonomy 
groupoid $Hol$( $M$, $\F)$ and $V$-bundle $\W_{M/\F}$.  
Note that the spaces $\widehat{W}^q_r$ containing non-commutative elements ${\bf X}$,   
as well as their cohomology  
are described here in terms of their characteristics.  
%%
%%%%
%%%%%%%%%%%%%%%%%%%%%%%%%%%%%%%%%%%%%%%%%%%%%%%%%%%%%%%%%%%%%%%%%%%%%%%%%%%%%%%%%%%%%%%%%%%%%%%%%%%
%%%%
In \cite{Huang} it was proven that the operator $\delta^q_r X= \Delta^q_r \Omega(X)$ 
has the chain-cochain property.  
For the holonomy groupoid $Hol(M, \F)$ we obtain the linear maps 
$\delta^n_r: C_r^n \to C_{r-1}^{n+1}$, 
for each pair $l$, $r \ge 0$, and 
$\delta^2_{\vartheta}: C_{\vartheta}^2 \to C_0^3$. 
%% 
%%%
%%%%%%%%%%%%%%%%%%%%%%%%%%%%%%%%%%%%%%%%%%%%%%%%%%%%%%%%%%%%%%%%%%%%%%%%%%%%%%%%%%%%%%%%%%%%%%%%%%%%%%%%
%%%%
Since $C_\infty^q \subset C_r^q$
for any $r \ge 0$, and 
$C_{r_2}^q\subset C_{r_1}^q$, 
for $r_1$, $r_2\in \ge 0$ with $r_1 \le r_2$,  
$\delta_r^q.C_\infty^q$ is independent of $r$. 
%%
%%%%
%%%%%%%%%%%%%%%%%%%%%%%%%%%%%%%%%%%%%%%%%%%%
%%%%
 Let
$\delta_\infty^q=\delta_r^q.C_\infty^q:   
C_\infty^q 
\to C_\infty^{l+1}$.  
%%
%%%
%%%%%%%%%%%%%%%%%%%%%%%%%%%%%%%%%%%%%%%%%%%%%%%%%%%%%%%%%%%%%%%%%%%%%%%%%%%%%%%%%%%%%%%%%%%%%%%%%%%%%%%%%%
%%%% 
Thus, we obtain a double complex $(C^q_r, \delta^q_r)$, 
$(C^2_{\vartheta}), \delta^2_\vartheta)$ 
 $q$, $r \ge 0$,    
in particular, with $r=\infty$,     
%%
%%%%%%%%%%%%%%%%%%%%%%%%%%%%%%%%%%%%%%%%%%%%%%%%%%%%%
%%
$0\longrightarrow 
C_r^0
\stackrel{ \delta^0_r }{\longrightarrow} 
C_{r-1}^1  
\stackrel{\delta^1_{r-1} }{\longrightarrow} 
(C_{r-2}^2, \delta_{r, 3} \; C^2_\vartheta )
\stackrel{(\delta^2_{r-2}, \; \delta_{r, 3} \; \delta^2_\vartheta )} {\longrightarrow}
\cdots 
\stackrel{\delta^{r-1}_1}{\longrightarrow}  
C_0^r
 \longrightarrow 0$,  
with $\delta_{r-1}^{q+1} \circ  \delta^q_r=0$,  
$\delta^2_\vartheta\circ \delta^1_2=0$.  
%%
%%
%%%%%%%%%%%%%%%%%%%%%%%%%%%%%%%%%%%%%%%%%%%%%%%%%%%%%%%%%%%%%%%%%%
%%%%
Using the above chain complex 
one is able to introduce a cohomology of $M/\F$.   
%%
%%%%%%%%%%%%%%%%%%%%%%%%%%%%%%%%%%%%%%%%%%%%%%%%%%%%%%%%%%%%%%%%%%%
%%%% 
%%
 For $q$, $r \ge 0$ we define the 
 $q$, $r$-th vertex operator algebra cohomology $H^q_r=H^q_r(V, W, \F)$ of $M/\F$ with coefficients in 
$\overline{W}$ and depending on  
$mp$ vertex operators introduced on $\U$  
to be 
$H_r^q=
{\rm Ker} 
\left\{
\begin{array}{c}
\delta^q_r
\\
\delta_{q, 1}\; \delta^2_{\vartheta} 
\end{array}
/\mbox{\rm Im}\; \delta^{q-1}_{r+1}\right.$, including $q$,  $r=\infty$.    
%%
%% 
%%%%%%%%%%%%%%%%%%%%%%%%%%%%%%%%%%%%%%%%%%%%%%%%%%%%%%%%%%%%%%%%%%%%%%%%%%%%%%%%%%%%%%%%%%%%%
%%%%
Note that in general $q \le l$, $m \le p$.  
Thus the cohomology $H^q_r$ describes subdomains of lower dimensions
 on leaves of $M/\F$.  
%%
%%%%%%%%%%%%%%%%%%%%%%%%%%%%%%%%%%%%%%%%%%%%%%%%%%%%%%%%%%%%%%%%%%%%%%%%%%%%%%%%%%%%%%%%%%%%%%%%%%%
%%%%%%%%%%%%%%%%%%%%%%%%%%%%%%%%%%%%%%%%%%%%%%%%%%%%%%%%%%%%%%%%%%%%%%%%%%%%%%%%%%%%%%%%%%%%%%%%%%%%
%%%%
Taking into account the content of this and previous Sections  
we obtain the following   
%%  
%%%%%%%%%%%%%%%%%%%%%%%%%%%%%%%%%%%%%%%%%%%%%%%%%%%%%%%%%%%%%%%%%%%%%%%%%%%%%%%%%%%%%%%%%%%%%
%%%%  
\begin{lemma}
\label{groupo}
The vertex operator algebra cohomology of the holonomy groupoid $Hol(M, \F)$
 is equivalent to  
 the cohomology of section spaces for  
$V$-twisted vertex algebra bundles $\W_{M/\F}$.     
\end{lemma}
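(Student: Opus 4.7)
The plan is to exhibit a quasi-isomorphism of double complexes between the standard $\check{C}$ech--de Rham bicomplex computing the cohomology of $Hol(M, \F)$ in the style of Connes--Moscovici and the bicomplex $(C^q_r, \delta^q_r)$ obtained from characteristics of $\widehat{W}^q_r$-valued sections of $\W_{M/\F}$. First I would recall that the cohomology of $Hol(M, \F)$ is computed by a bicomplex whose $(q,r)$-cochains assign to any composable chain $U_{i'_1,1} \stackrel{h_{i'_1,1}}{\rmap} \cdots \stackrel{h_{i'_r,k}}{\rmap} U_{i'_r,k}$ of $r$ holonomy embeddings a $q$-form on the intersection domain $U$, with horizontal differential given by \eqref{deltasy} and vertical differential by $(-1)^q d$. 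I would then match this bicomplex against $(C^q_r, \delta^q_r)$ term-by-term, identifying the cochains on the chain $U_{i'_1,1}\to \cdots \to U_{i'_r,k}$ with the space of characteristics $[\Omega(X)]$ of sections ${\bf X} \in \widehat{W}^q(V, W, \F)$ associated to $q$ points on the common leaf $\mathfrak f$ together with $r$ vertex operator insertions at points of the $U_{i'_a,b}$.

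Second, I would verify that under this identification the horizontal differential $\Delta = \sum_i (-1)^i \Delta_i$ from \eqref{deltasy} coincides with the algebraic shift $\Delta^q_r$ from \eqref{hatdelta}, after passing to characteristics. The key input is Lemma \ref{mainpro}: since each ${\bf X}$ is a coordinate-independent section on the collection of punctured discs, pulling back along a holonomy embedding $h_i$ acts on the $(t_{p_1},\ldots,t_{p_l})$-trivialization precisely as the insertion of a vertex operator $\mathcal{Y}_W(v_{i_a,j_a}, z_{i_a,j_a})$ at the transported point, with the $R(\rho_1,\ldots,\rho_s)$-action of \S\ref{bundle} absorbing the coordinate change on overlapping discs. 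The vertical direction matches via the translation identity $\frac{d}{dz}Y(v,z) = Y(L_V(-1)v, z)$ together with the characteristic condition $\partial_{z_i}\Omega = \Omega(\ldots;L_V(-1)v_i,z_i;\ldots)$ from \S\ref{tuzla}. The shuffle symmetry and the analytic conditions \eqref{pairing1}--\eqref{pairing2} guarantee that $\delta^q_r$ is well defined and that the composition identities $\delta^{q+1}_{r-1}\circ \delta^q_r = 0$, $\delta^2_\vartheta \circ \delta^1_2 = 0$ hold.

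Third, I would invoke the result of Huang cited in the excerpt that $\delta^q_r [\Omega(X)] = [\Omega(\Delta^q_r X)]$ carries the chain-cochain property, and then use the non-degeneracy of the bilinear pairing $(\cdot, \cdot)$ on $\widehat{W}_{(z_1,\ldots,z_s)}$ to promote the natural map ${\bf X} \mapsto [\Omega(X)]$ to a quasi-isomorphism. Taking the cohomology $H^q_r = \ker \delta^q_r / \operatorname{im}\delta^{q-1}_{r+1}$ (with the $\delta^2_\vartheta$-correction in the appropriate bidegree) and comparing with the bicomplex cohomology of $Hol(M, \F)$ via the filtration by $r$ would then yield the asserted equivalence.

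The main obstacle will be the matching between the holonomy-theoretic horizontal differential \eqref{deltasy} and the algebraic shift $\Delta^q_r$ of \eqref{hatdelta}: the former is defined through pull-backs of sections along sequences of holonomy embeddings on intersections of transversal charts, while the latter is defined intrinsically via iterated vertex operator products and associativity of $V$-modules. Showing that these coincide, once characteristics are taken, requires a careful local computation verifying that under the $(t_{p_1},\ldots,t_{p_l})$-trivialization from Lemma \ref{mainpro} a composition $h_{i+1}h_i$ of holonomy embeddings translates into the associativity relation $\mathcal{Y}_W(\mathcal{Y}_V(v,z_1-z_2)v', z_2)$ appearing in the middle terms of \eqref{hatdelta}. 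The convergence of the iterated vertex operator products in the regions specified by \eqref{pairing1}--\eqref{pairing2}, together with the coherence of orders of poles, supplies the analytic control needed for this identification; bookkeeping signs and the ordering of insertions along longer chains of holonomy embeddings is the delicate point.
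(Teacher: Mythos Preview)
The paper does not actually supply a proof of Lemma~\ref{groupo}: it is stated immediately after the sentence ``Taking into account the content of this and previous Sections we obtain the following'' and is followed directly by the bibliography. In other words, the paper treats the lemma as a summary statement whose justification is the totality of the preceding constructions (the definition of $\widehat{W}^q_r$, the two descriptions \eqref{hatdelta} and \eqref{deltasy} of the shift operator, Huang's chain--cochain property, and the definition of $H^q_r$), without spelling out the comparison map or checking that it is a quasi-isomorphism.

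Your proposal is therefore considerably more detailed than anything in the paper. The overall shape of your argument---matching the \v{C}ech--de Rham bicomplex of \cite{CM} against $(C^q_r,\delta^q_r)$ via characteristics, and invoking Lemma~\ref{mainpro} to transport holonomy pull-backs into vertex-operator insertions---is in the spirit of what the paper appears to intend, and you have correctly isolated the genuine content: the identification of the horizontal differential \eqref{deltasy} with the algebraic shift \eqref{hatdelta} after passing to characteristics. The paper asserts this identification implicitly (``In terms of holonomy embedding, the shift operator \ldots\ is given by the standard differentials'') but never carries it out. Your observation that the associativity relation for vertex operators must be matched with composition of holonomy embeddings is exactly the missing step; the paper does not address it, and your caveat about bookkeeping signs and insertion orders is well placed. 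One point to be careful about: the non-degeneracy of the pairing alone does not automatically promote ${\bf X}\mapsto[\Omega(X)]$ to a quasi-isomorphism of complexes---you would still need to argue that the analytic conditions \eqref{pairing1}--\eqref{pairing2} force every class on the $Hol(M,\F)$ side to be representable by a characteristic of some admissible ${\bf X}$, and the paper provides no such surjectivity statement.
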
 
%%
%%%%%%%%%%%%%%%%%%%%%%%%%%%%%%%%%%%%%%%%%%%%%%%%%%%%%%%%%%%%%%%%%%%%%%%%%%%%%%%%%%%%%%%%
%%%%%%%%%%%%%%%%%%%%%%%%%%%%%%%%%%%%%%%%%%%%%%%%%%%%%%%%%%%%%%%%%%%%%%%%%%%%%%%%%%%%%%%%
\section*{Acknowledgment}
The author is supported by the Institute of Mathematics, 
 Academy of Sciences of the Czech Republic (RVO 67985840). 
%% 
%%%%%%%%%%%%%%%%%%%%%%%%%%%%%%%%%%%%%%%%%%%%%%%%%%%%%%%%%%%%%%%%%%%%%%%%%%%%%%%%%%%%%%%%%%%%%%%
%%%%%%%%%%%%%%%%%%%%%%%%%%%%%%%%%%%%%%%%%%%%%%%%%%%%%%%%%%%%%%%%%%%%%%%%%%%%%%%%%%%%%%%%%%%%%%%

\end{document}